\newcommand{\footremember}[2]{%
    \footnote{#2}
    \newcounter{#1}
    \setcounter{#1}{\value{footnote}}%
}
\newcommand{\footrecall}[1]{%
    \footnotemark[\value{#1}]%
}
\newcommand{\define}{\mathrel{{\mathop:}{=}}}
\newtheorem{theorem}{Theorem}
\newtheorem{proposition}{Proposition}
\newcommand\blfootnote[1]{%
  \begingroup
  \renewcommand\thefootnote{}\footnote{#1}%
  \addtocounter{footnote}{0}%
  \endgroup
}
\definecolor{Darkgreen}{RGB}{0,160,0}
\definecolor{byzantine}{rgb}{0.74, 0.2, 0.64}
\title{Portfolio problems with two levels decision-makers: Optimal portfolio selection with pricing decisions on transaction costs. Extended version and complete risk profiles analysis }
\author{%
  Marina Leal  \blfootnote{This research has been funded by the Spanish Ministry of Science and Technology project MTM2016-74983-C2-1-R.}\footremember{imus}{Institute of Mathematics of the University of Seville (IMUS), Spain} \footremember{dto}{Department of Statistics and Operations Research, Faculty of Mathematics, University of Seville, Spain}%
  \and Diego Ponce \footrecall{imus} \footnote{CIRRELT and Department of Mechanical, Industrial and Aerospace Engineering, Faculty of Engineering and Computer Science, Concordia University, Montreal, Canada.}%
  \and Justo Puerto \footrecall{imus} \footrecall{dto}
  }
\date{}
\begin{document}
\maketitle

%
%
%
%
%
%
%
%

\textbf{Abstract}. {This paper presents novel bilevel leader-follower portfolio selection problems in which the financial intermediary becomes a decision-maker. This financial intermediary decides on the unit transaction costs for investing in some securities, maximizing its benefits, and the investor chooses his optimal portfolio, minimizing risk and ensuring a given expected return. Hence,} transaction costs become decision variables in the portfolio problem, {and two levels of decision-makers are incorporated}: the financial intermediary and the investor. These situations give rise to general Nonlinear Programming formulations in both levels of the decision process. We present different bilevel versions of the problem: financial intermediary-leader, investor-leader, and social welfare; besides, their properties are analyzed.  Moreover, we develop Mixed Integer Linear Programming formulations for some of the proposed {problems} and effective algorithms for some others. Finally, {we report on some computational experiments performed on data taken from the Dow Jones Industrial Average, and analyze and compare the results obtained by the different models.}

\textbf{Key words:} Portfolio Optimization, bilevel programming, combinatorial optimization,  {pricing problems, transaction costs,} Conditional Value at Risk measure (CVaR).
%
%

\section{Introduction}

The classical model in portfolio optimization was originally proposed by Markowitz in 1952 \cite{mar52}. This model has served as the initial point for the development of modern portfolio theory. Over time, portfolio optimization problems have become more realistic, incorporating real-life aspects that make the resulting portfolios more cost-effective than the alternatives that do not consider them \cite{cas11, kol14, Lynch11,man14,man15}. Transaction costs can be seen as one of these important actual features to be included in portfolio optimization. These costs are those incurred by the investors when buying and selling assets on financial markets, charged by {the brokers, the financial institutions or the market makers} playing the role of intermediary. Transaction costs usually include banks and brokers commissions, fees, etc. These commissions or fees have a direct impact on the portfolio, especially for individual or small investors, since they will determine the net returns, reducing them and decreasing also the budget available for future investments \cite{bau,bau10, Liu02}.\par

To the best of our knowledge, in the existing literature,  transaction costs are assumed to be given {\cite{dav90, kor98, lob07, mag76, man14,man15, mor95}}. They can be a fixed cost applied to each selected security in the portfolio; or a variable {cost} to be paid which depends on the amount invested in each security included in the portfolio (see e.g. \cite{bau, bau10,kel00,man14, man15, val14,woo13} and the references therein). This dependence can be proportional {to the investment or} given by a fixed cost that is only charged if the amount invested exceeds a given threshold, or by some other functional form (see e.g. \cite{bau10, kon05, le09, man14,man15}  and the references therein). But in any case, unit transaction costs are known and predetermined in the optimization process. Nevertheless, it is meaningful to analyze the situations where transaction costs {can be decision variables set by financial institutions so that} they are trying to maximize its profit as part of the decision process that leads to optimal portfolios for the investors.\par

The portfolio optimization problem considered in this paper is based on a single-period model of investment and incorporates a {transaction costs setting phase.}  We assume that there are two decision-makers involved in the situation: {on the one hand, the investor and on the other hand, the broker specialist, market maker or financial institution ({that we will call from now on, for simplicity broker-dealer})}. At the beginning of a period, an investor allocates his capital among various assets and during the investment period, each asset generates a random rate of return. Moreover, we consider that the broker-dealer can charge some {unit} transaction costs on the securities selected by the investor trying to maximize its benefits {but anticipating the rational response of the investor}. This is a pricing phase in which the {broker-dealer} decides on how much is going to charge to {the investor for} the traded securities. Considering {unit} transaction costs as a decision variable of the model is a novel element in portfolio optimization and this is one of the main contributions of this paper. Then, at the end of the period, the result for the investor is a variation of his capital (increased or decreased) which is measured by the weighted average of the individual rates of return minus commissions or fees. {In addition}, the result for the {broker-dealer} is the amount paid by the investor, which depends on the {costs} set on the traded securities included in the portfolio chosen by the investor. \par

Based on the structure of financial markets, we assume a hierarchical relationship between the parties involved in the portfolio problem, that is, we {define a natural problem} in which the {broker-dealer} sets the {unit transaction costs} first, trying to anticipate the rational response of the investor. This hierarchical analysis of the portfolio problem has not been addressed before and it is another contribution of our paper. Once the {costs} are fixed, the investor chooses his optimal portfolio. For the sake of completeness, we also analyze the case in which the investor chooses his portfolio first, and after that, the {broker-dealer} sets the transaction costs. In order to model these hierarchical structures, we use a bilevel optimization approach (see e.g. \cite{bar13,col05, lab16, sin17}). Furthermore, we consider a social welfare {problem} {where} both, {broker-dealer} and investor, cooperate to maximize their returns. We assume in the different {problems} that all economic or financial information is common knowledge and that all the decision-makers in the problem have access to it.

The contributions of this paper can be summarized as follows: 1) it incorporates for the first time the above hierarchical approaches with two-levels of decision-makers on portfolio optimization problems (the {broker-dealer} sets {unit} transaction costs trying to maximize its benefits, whereas the investor minimizes risk while ensuring a given expected return \cite{ben03,ben14}); 2) it introduces transaction costs as decision variables controlled by the broker-dealer; and 3) it develops different bilevel programming formulations to obtain optimal solutions for the considered {problems}. This paper introduces new models for the bilevel portfolio optimization  problem.  As far as we know, bilevel models for the portfolio selection that {set unit transaction costs as decision variables of the problem} have not been considered in the literature {before}.\par

The rest of the paper is organized as follows. Section \ref{s:prelim} states the preliminaries and the notation used throughout the paper. In Section \ref{Sect:Bank-leader}, we present the {problem} in which the {broker-dealer} is the leader and we develop two different Mixed Integer Linear Programming (MILP) formulations to solve such problem. Section \ref{Investor-leader} introduces the investor-leader {problem} and develops a Linear Programming (LP) formulation for it. In the more general case where additional constraints are required on the portfolio selection, it is presented a convergent iterative algorithm based on an ``ad hoc'' decomposition of the model. Next, in Section \ref{Cooperative}, it is addressed a social welfare {problem} . There, we propose a MILP formulation and an algorithm based on Benders decomposition for solving {it}. Section \ref{Computational} is devoted to reporting on the computational study of the different {problems and solution methods} discussed in the previous sections. Our results are based on data taken from Dow Jones Industrial Average. Finally, Section \ref{sec:Conclusions} concludes the paper.

\section{Preliminaries\label{s:prelim}}

Let $N=\{1,...,n\}$ be the set of securities considered for an investment, $B\subseteq N$ a subset of securities in which the broker-dealer can charge {unit} transaction costs to the investor {and $R \define N \setminus \{B\}$}. In most cases, $B=N$, but there is no loss of generality to consider that $B$ is a proper subset of $N$. \par
{First,} we assume that the {broker-dealer} can  {price} security $j\in B$ from a discrete set, with cardinality $s_j$, of admissible {costs}, $\mathbb{P}_j=\{c_{j1},...,c_{js_j}\}$, and the {broker-dealer}'s goal is to maximize its benefit. Further, we consider {proportional transaction costs:} the {cost} charged by the {broker-dealer} per security is proportional to the amount invested in such security. {Hence, the {broker-dealer}'s decision variables are unit transaction costs (commissions, fees, ...) to be charged (proportionally) to the securities. }

Let $x=(x_j)_{j=1,...,n}$ denote a vector of decision variables: $x_j$ being the weight of security $j$ in the portfolio. We only assume that the invested capital can not exceed the available budget {and non-negativity}, i.e.,
\[x: \sum_{j=1}^n x_j\leq 1, \ x_j\geq 0, \quad \text{ for } j=1,...,n.\]
This budget constraint is the minimum requirement on the structure of the portfolios. Nevertheless and without loss of generality, we could have assumed that some other linear constraints are imposed on the structure of the requested portfolio $x$.  All the results in this paper can be easily extended to more general situations that consider  polyhedral sets of constraints defining the admissible set of portfolios. \par

Let us denote by $p_j$ the {unit transaction cost} chosen by the {broker-dealer} to {charge} security $j$, $j\in B$. Then, for a given portfolio $x$ (fixed),  the problem faced by the {broker-dealer} can be modeled using the following set of binary decision variables: $a_{jk}=1$ if {cost} $c_{jk}$ is assigned to $p_j$, this is, if $p_j=c_{jk}$; $a_{jk}=0$ otherwise. Thus, to maximize his profit the {broker-dealer} solves the  following problem:\par
\begin{align}
\label{for:bank_of} \tag{\textbf{PricP}} \max &\displaystyle \sum_{j\in B} p_jx_j\\
\label{for:bank_c1} \ \ \ \ \ \hbox{s.t.} \  & \displaystyle p_j= \sum_{k=1}^{s_j}c_{jk}a_{jk}, \quad  j\in B,\\
\label{for:bank_c2}&\displaystyle \sum_{k=1}^{s_j}a_{jk}=1, \hspace*{1cm}  j \in B,\\
\label{for:bank_fr}&\displaystyle a_{jk}\in \{0,1\}, \hspace*{1cm} j\in B, k=1,...,s_j.
\end{align}

If no further constraints are imposed on {costs} the above is a valid formulation. However, in general, we will assume without loss of generality that the set of {costs} for the {broker-dealer} can be restricted to belong to some polyhedron $\mathbb{P}$, allowing $\mathbb{P}=\mathbb{R}^{|B|}_+$. This can be easily included in the above formulation with the following constraint:
\begin{equation}
\label{for:bank_frP} p\in \mathbb{P}.
\end{equation}

We observe that, if $x$ is known, and constraint (\ref{for:bank_frP}) is not included, the above problem is easy to solve (see Proposition \ref{pro:bank-follow}): the {broker-dealer} will set {transaction costs} to the maximum ones among those available for each security. Nevertheless, if the portfolio is unknown (to be decided by the investor) or additional constraints, such as regulation constraints, are imposed into the model, the problem becomes more difficult to be solved, since there exists no explicit expression for an optimal solution.

{Moreover}, we suppose that the investor wants to reduce the risk of his investment while ensuring a given expected return. At this point, several risk measures could be considered, among them variance of returns, Mean Absolute Deviation (MAD), Conditional Value at Risk (CVaR), Gini's Mean Difference, etcetera. (Here, we refer the reader to \cite{man03} for further details on the topic.) In this paper, we have focused on a  portfolio optimization problem based on the CVaR risk measure. This risk measure aims to avoid large losses: for a specific probability level $\alpha$, the CVaR measures the conditional expectation of the  smallest returns with a cumulative probability $\alpha$, that is, the average return of the given size (quantile) of worst realizations \cite{man03, prt17, roc00}. Therefore, we assume that the investor's goals are to maximize its CVaR and, at the same time, to ensure that a minimum expected reward $\mu_0$ is obtained with his portfolio.

{There exist in the literature different ways of accounting for the transaction costs into the portfolio model \cite{man15, man15_2}. For instance,  including them in the objective function \cite{ang12, oli18, woo13}, subtracting them from the expected return \cite{kre11,man05}, reducing the capital available for the investment \cite{woo13}, etcetera. We assume in our approach that transaction costs are directly removed from the expected return.\par}

In order to model the above situation, we consider that the rate of return of each security $j\in N$ is represented by a random variable $R_j$ with a given mean $\mu_j=E(R_j)$. Each portfolio $x$ defines a random variable $R_x=\sum_{j=1}^nR_jx_j$ that represents the portfolio rate of return (its expected value can be computed as $\mu(x)=\sum_{j=1}^n\mu_jx_j$). We consider $T$ scenarios, each of them with probability $\pi_t$, $t = 1,..., T$, and assume that for each random variable $R_j$ its realization, $r_{jt}$, under the scenario $t$ is known. Thus, once the {broker-dealer} has set {the transaction costs}, $p$, the realization of the portfolio rate of return $R_x$ under scenario $t$ is given as $y_t=\sum_{j=1}^nr_{jt}x_j-\sum_{i\in B}p_ix_i$.  \par

With this information, we assume that the investor wants to maximize the CVaR$_\alpha$, namely the conditional expectation of the smallest returns with cumulative probability $\alpha$,  while ensuring a minimum expected return $\mu_0$. Thus, the portfolio optimization {problem}  that the investor wants to solve can be formulated as:
{
\begin{align}
\label{for:CVaR_of} \tag{\textbf{CVaRP}}\max & \displaystyle \  \eta - \dfrac{1}{\alpha}\sum_{t=1}^T\pi_td_t\\
\label{for:CVaR_cy} \ \ \ \ \ \hbox{s.t.} \   &y_t=\sum_{j=1}^nr_{jt}x_j-\sum_{i\in B}p_ix_i, \quad t=1,...,T, \\
\label{for:CVaR_creturn}&\sum_{t=1}^T\pi_ty_t\ge \mu_0,\\
\label{for:CVaR_cd} &\displaystyle d_t\ge \eta - y_t, \hspace*{0.9cm}  t=1,...,T, \\
\label{for:CVaR_frd}& d_{t}\ge 0, \hspace*{1.8cm} t=1,...,T,\\
\label{for:CVaR_cx}&\displaystyle \sum_{j=1}^n x_{j}\leq 1,  \\
\label{for:CVaR_frx}& x_{j}\ge 0, \hspace*{1.7cm} j=1,...,n,
\end{align}

{Observe that $\eta$ is a continuous variable that models the $\alpha$ \emph{Value at Risk}, $VaR_\alpha$, namely the value of the minimum threshold for which the probability of the scenarios with a return less than or equal to $\eta$ is at least $\alpha$. }

Next, (\ref{for:CVaR_cy}) and (\ref{for:CVaR_creturn}) are the scenario constraints. Constraint (\ref{for:CVaR_cy}) gives the expected return in each scenario. Note that, the expected return in each scenario is for the net rate of returns, $\sum_{j=1}^nr_{jt}x_j$,  minus  the transaction rates $\sum_{i\in B}p_ix_i$. Whereas constraint (\ref{for:CVaR_creturn}) ensures an expected return of, at least, $\mu_0$. The objective function and the set of constraints (\ref{for:CVaR_cd}) and (\ref{for:CVaR_frd}) model the CVaR (see Mansini et. al \cite{man03} for details). And finally, the sets of constraints (\ref{for:CVaR_cx}) and (\ref{for:CVaR_frx}) force $x$ to define a portfolio.}

We note also that by choosing different values for the parameters $\alpha$ and $\mu_0$, in the formulation above, different types of investors (i.e., different level of attitude towards risk) can be {incorporated in the model}.

\section{Bilevel {broker-dealer}-leader Investor-follower Portfolio Problem (BLIFP)} \label{Sect:Bank-leader}
We start analyzing a hierarchical structure in the financial markets in which the {broker-dealer} sets the transaction costs first, and after that, the investor chooses his portfolio. Observe that in this situation, the problem faced from the investor point of view reduces to a portfolio selection, under the considered criterion, which in this case is to hedge against risk maximizing the average $\alpha$-quantile of his smallest returns (CVaR$_\alpha$). Therefore, we study this situation from the point of view of both the financial intermediary and the investor, simultaneously, which is a novel perspective.\par

We model the situation as a bilevel leader-follower problem in which the {broker-dealer} has to fix the {transaction costs}, from the polyhedral set $\mathbb{P}\in \mathbb{R}^{|B|}$, maximizing his benefits by assuming that, after his decision is made, the investor will choose his optimal portfolio. \par
Using the bilevel optimization framework, the \textbf{BLIFP} can be modeled as follows:

{
\begin{align*}
\label{for:BLIFP} \tag{\textbf{BLIFP0}} \max &\displaystyle \sum_{j\in B} p_jx_j\\
\nonumber \ \ \ \ \hbox{s.t. }
&(\ref{for:bank_c1}), (\ref{for:bank_c2}), (\ref{for:bank_fr}),(\ref{for:bank_frP}), \tag{\text{Bank Constraints}}\\
&x\in arg \max \quad \displaystyle \eta - \dfrac{1}{\alpha}\sum_{t=1}^T\pi_td_t,\\
&\ \ \ \ \ \ \ \ \nonumber \ \ \ \ \hbox{s.t.} \quad (\ref{for:CVaR_cy}), (\ref{for:CVaR_creturn}), (\ref{for:CVaR_cd}), (\ref{for:CVaR_frd}), (\ref{for:CVaR_cx}),   (\ref{for:CVaR_frx}) \tag{\text{Investor Constraints}}.
\end{align*}
}

\bigskip
Our goal is to solve the above {bilevel programming model} to provide answers to the new portfolio optimization {problem}. We propose two different MILP formulations with the aim of making a  computational comparison to check which one is more effective.
\par

\subsection{Formulation BLIFP1}
The main difficulty in handling \ref{for:BLIFP} is that some of its decision variables are constrained to be optimal solutions of a nested optimization problem. In order to overcome that issue  we observe that the follower problem in \ref{for:BLIFP} is linear on $x$ when $p$ is given. This allows us to {easily} compute its exact dual as:

\begin{align}
\label{for:dual1}\tag{\textbf{Dual1}}\min \; & \displaystyle \beta +\mu_0\mu\\
\label{for:dual1_c1} \ \ \ \ \ \hbox{s.t.} \  &\displaystyle\beta-\sum_{t=1}^T(r_{jt}-p_j)\delta_t\ge 0, \quad  j\in B,\\
\label{for:dual1_c2}&\displaystyle\beta-\sum_{t=1}^Tr_{jt}\delta_t\ge 0, \hspace*{1.5cm}  j\in R,\\
\label{for:dual1_c3}&\displaystyle -\sum_{t=1}^T \gamma_{t}=1,  \\
\label{for:dual1_c4}&\displaystyle \gamma_t\ge -\dfrac{\pi_t}{\alpha}, \hspace*{2.7cm} t=1,...,T, \\
\label{for:dual1_c5}&\displaystyle\gamma_t+\delta_t+\pi_t\mu =0, \hspace*{1.35cm} t=1,...,T, \\
\label{for:dual1_fr1}& \gamma_{t}\le 0, \hspace*{3.1cm} t=1,...,T,\\
\label{for:dual1_fr2}&\mu \le 0, \beta \geq 0.
\end{align}

{We note in passing that variables $\delta_t$, $\mu$, $\gamma_t$ and $\beta$, are the dual variables associated to constraints (\ref{for:CVaR_cy}), (\ref{for:CVaR_creturn}), (\ref{for:CVaR_cd}) and (\ref{for:CVaR_cx}), respectively. Therefore, they can be interpreted as multipliers explaining the marginal variation of the objective function values as a function of the corresponding constraints' right-hand-sides. Nevertheless,  we do not go into details in the economic insights of this dual model, since our use is instrumental to obtain a single level reformulation of the hierarchical model.}

Then, \ref{for:BLIFP} can be reformulated, applying the Strong Duality Theorem, including the constraints of the primal and dual problem together with the equation that matches the objective values of the follower primal and dual problems. Thus, \ref{for:BLIFP} is equivalent to solving this new mathematical programming model:

\begin{align}
\nonumber \max &\displaystyle \sum_{j\in B} p_jx_j\\
\nonumber \ \ \ \ \ \hbox{s.t.} \   &(\ref{for:bank_c1}), (\ref{for:bank_c2}), (\ref{for:bank_fr}),(\ref{for:bank_frP}),  \tag{\text{Bank Constraints}}\\
\label{for:strongduality_1}& \displaystyle \eta - \dfrac{1}{\alpha}\sum_{t=1}^T\pi_td_t=\beta +\mu_0\mu,\\
\nonumber & {(\ref{for:CVaR_cy}), (\ref{for:CVaR_creturn}), (\ref{for:CVaR_cd}), (\ref{for:CVaR_frd}), (\ref{for:CVaR_cx}),   (\ref{for:CVaR_frx}), \tag{\text{Investor Constraints}}}\\
\nonumber & (\ref{for:dual1_c1}), (\ref{for:dual1_c2}), (\ref{for:dual1_c3}), (\ref{for:dual1_c4}), (\ref{for:dual1_c5}), (\ref{for:dual1_fr1}), (\ref{for:dual1_fr2} \tag{\text{Dual Constraints}}).
\end{align}

We can observe that in the above formulation we have some bilinear terms, $p_jx_j$ and $p_j\delta_t$ that appear in the leader objective function and constraints (\ref{for:CVaR_cy}) and (\ref{for:dual1_c1}). In order to solve the problem using off-the-shelf solvers, they can be linearized `a la' McKormick, {\cite{mcc76}}, giving rise to another exact MILP formulation for the bilevel problem.\par

Indeed, since $p_j= \sum_{k=1}^{s_j}c_{jk}a_{jk},\;  \forall j\in B$, we could substitute the terms $p_jx_j=\sum_{k=1}^{s_j}c_{jk}\hat{a}_{jk}$ adding variables $\hat{a}_{jk},\; \forall j\in B, k=1,...,s_j,$ and the following set of constraints:

\begin{equation}\label{for:linear_a_NO}
\begin{array}{lll}
&\displaystyle \hat{a}_{jk}\leq x_j, &  j\in B, k=1,...,s_j,\\
&\displaystyle \hat{a}_{jk}\leq a_{jk}, &  j\in B, k=1,...,s_j,\\
&\displaystyle \hat{a}_{jk}\geq x_j-(1-a_{jk}), &  j\in B, k=1,...,s_j,\\
&\displaystyle \hat{a}_{jk}\geq 0, &  j\in B, k=1,...,s_j.
\end{array}
\end{equation}

Furthermore, this linearization can be simplified. Observe that it is sufficient to include in (\ref{for:BLIFP}) variables $\hat a_{jk}$ and constraints

\begin{equation}\label{for:linear_a}
\begin{array}{lll}
&\displaystyle \hat{a}_{jk}\leq a_{jk},&  j\in B, k=1,...,s_j,\\
&\displaystyle \hat{a}_{jk}\geq 0, &  j\in B, k=1,...,s_j,\\
\end{array}
\end{equation}

\noindent from (\ref{for:linear_a_NO}) and to substitute the variables $x_j=\sum_{k=1}^{s_j}\hat{a}_{jk}, \forall j\in B$. We obtain in this manner an equivalent, {smaller} formulation with the bilinear terms $a_{jk}x_j$ linearized for all $j\in B, k=1,...,s_j$, but with less constraints and decision variables.

Following a similar argument we can linearize the products $p_j\delta_t=\sum_{k=1}^{s_j}c_{jk}a_{jk}\delta_t$. To do that, take $M$ a sufficiently large positive number and define the new variables $\hat{\delta}_{jkt}=a_{jk}\delta_t,$ $\forall j \in B, k=1,...,s_j, t=1,...,T$. This set of variables together with the following family of constraints linearize all the bilinear terms:

\begin{equation}\label{for:linear_delta+}
\begin{array}{lll}

&\displaystyle \hat{\delta}_{jkt}\leq \delta_t, &  j\in B, k=1,...,s_j, t=1,...,T,\\
&\displaystyle \hat{\delta}_{jkt}\leq Ma_{jk}, &  j\in B, k=1,...,s_j, t=1,...,T,\\
&\displaystyle \hat{\delta}_{jkt}\geq \delta_t-(1-a_{jk})M, &  j\in B, k=1,...,s_j, t=1,...,T,\\
&\displaystyle \hat{\delta}_{jkt}\geq 0, &  j\in B, k=1,...,s_j, t=1,...,T.\\
\end{array}
\end{equation}

Combining the above elements, all together, we obtain a valid MILP formulation for \textbf{BLIFP}:

{
\begin{align}
 \label{for:BLIFP1} \tag{\textbf{BLIFP1}} \max &\displaystyle \sum_{j\in B} \sum_{k=1}^{s_j}c_{jk}\hat{a}_{jk}\\
\nonumber \tag{\ref{for:bank_c2}}\ \ \ \ \ \hbox{s.t.} \   & \displaystyle \sum_{k=1}^{s_j}a_{jk}=1, \hspace*{2.05cm}  j \in B,\\
\nonumber \tag{\ref{for:bank_fr}}&\displaystyle a_{jk}\in \{0,1\}, \hspace*{2cm}  j\in B, k=1,...,s_j,\\
\nonumber \tag{\ref{for:strongduality_1}}& \displaystyle \eta - \dfrac{1}{\alpha}\sum_{t=1}^T\pi_td_t=\beta +\mu_0\mu\\
\label{for:CVaR_cy_linear}&y_t=\sum_{j\in B}r_{jt}\left( \sum_{k=1}^{s_j} \hat a_{jk}\right) +\sum_{j \in R}r_{jt}x_j-\sum_{j \in B}\sum_{k=1}^{s_j}c_{jk}\hat a_{jk}, \hspace*{0.3cm} t=1,...,T, \\
\nonumber \tag{\ref{for:CVaR_creturn}}&\sum_{t=1}^T\pi_ty_t\ge \mu_0,\\
\nonumber \tag{\ref{for:CVaR_cd}}&\displaystyle d_t\ge \eta - y_t, \hspace*{2.2cm}  t=1,...,T, \\
\nonumber \tag{\ref{for:CVaR_frd}}& d_{t}\ge 0, \hspace*{3.5cm}  t=1,...,T,\\
\label{for:CVaR_cx_a}&\displaystyle \sum_{j \in B}\sum_{k=1}^{s_j}\hat a_{jk}+\sum_{j \in R}x_j \leq 1,  \\
\nonumber& \tag{\ref{for:CVaR_frx}}x_{j}\ge 0, \hspace*{3.45cm} j \in R,\\
\nonumber & \begin{array}{l}
\displaystyle \hat{a}_{jk}\leq a_{jk}, \hspace*{2.75cm}   j\in B, k=1,...,s_j,\\
\displaystyle \hat{a}_{jk}\geq 0, \hspace*{3.05cm}   j\in B, k=1,...,s_j,
\end{array}  \tag{\ref{for:linear_a}}\\
\label{for:dual1_c1_linear}  &\displaystyle\beta-\sum_{t=1}^T\left(r_{jt}\delta_t-\sum_{k=1}^{s_j}c_{jk}\hat{\delta}_{jkt} \right)\ge 0, \hspace*{0.3cm}  j\in B,\\
\nonumber \tag{\ref{for:dual1_c2}}&\displaystyle\beta-\sum_{t=1}^Tr_{jt}\delta_t\ge 0, \hspace*{1.7cm}  j\in R,\\
\nonumber \tag{\ref{for:dual1_c3}}&\displaystyle -\sum_{t=1}^T \gamma_{t}=1,  \\
\nonumber \tag{\ref{for:dual1_c4}}&\displaystyle \gamma_t\ge -\dfrac{\pi_t}{\alpha}, \hspace*{2.9cm} t=1,...,T, \\
\nonumber \tag{\ref{for:dual1_c5}}&\displaystyle\gamma_t+\delta_t+\pi_t\mu =0, \hspace*{1.6cm} t=1,...,T, \\
\nonumber \tag{\ref{for:dual1_fr1}}& \gamma_{t}\le 0, \hspace*{3.4cm} t=1,...,T,\\
\nonumber \tag{\ref{for:dual1_fr2}}&\mu \le 0, \beta \geq 0,\\
\nonumber &\begin{array}{l}
\displaystyle \hat{\delta}_{jkt}\leq \delta_t, \hspace*{2.95cm}  j\in B, k=1,...,s_j, t=1,...,T,\\
\displaystyle \hat{\delta}_{jkt}\leq Ma_{jk}, \hspace*{2.25cm}  j\in B, k=1,...,s_j, t=1,...,T,\\
\displaystyle \hat{\delta}_{jkt}\geq \delta_t-(1-a_{jk})M, \hspace*{0.5cm}  j\in B, k=1,...,s_j, t=1,...,T,\\
\displaystyle \hat{\delta}_{jkt}\geq 0, \hspace*{2.9cm}  j\in B, k=1,...,s_j, t=1,...,T.
\end{array} \tag{\ref{for:linear_delta+}}
\end{align}
}

The above long formulation can be easily understood once the different sets of constraints are grouped by meaningful blocks. We observe that (\ref{for:bank_c2}), (\ref{for:bank_fr}) and (\ref{for:bank_frP}) are the constraints that define the feasible domain of the {broker-dealer} problem. Constraint (\ref{for:strongduality_1}) imposes the strong duality condition among the primal and dual formulation of the follower problem. Next, { (\ref{for:CVaR_cy_linear}), (\ref{for:CVaR_creturn}), (\ref{for:CVaR_cd}), (\ref{for:CVaR_frd}), (\ref{for:CVaR_cx_a}),   (\ref{for:CVaR_frx}) and (\ref{for:linear_a}) are the constraints that correctly define the linearized version of the investor subproblem}.  Finally, the constraints that come from the linearized version of the dual of the follower problem are (\ref{for:dual1_c1_linear}),(\ref{for:dual1_c2}), (\ref{for:dual1_c3}), (\ref{for:dual1_c4}), (\ref{for:dual1_c5}), (\ref{for:dual1_fr1}), (\ref{for:dual1_fr2}) and(\ref{for:linear_delta+}).

Using these blocks of constraints \ref{for:BLIFP1} can be written in the following compact form.
\begin{align}
 \label{for:BLIFP1} \tag{\textbf{BLIFP1}} \max &\displaystyle \sum_{j\in B} \sum_{k=1}^{s_j}c_{jk}\hat{a}_{jk}\\
\nonumber \ \ \ \ \ \hbox{s.t.} \   & (\ref{for:bank_c2}), (\ref{for:bank_fr}),(\ref{for:bank_frP}),  \tag{\text{Linear {broker-dealer} Constraints}}\\
\nonumber & (\ref{for:strongduality_1}), \tag{\text{Strong Duality Constraint}}\\
\nonumber & { (\ref{for:CVaR_cy_linear}), (\ref{for:CVaR_creturn}), (\ref{for:CVaR_cd}), (\ref{for:CVaR_frd}), (\ref{for:CVaR_cx_a}),   (\ref{for:CVaR_frx}), (\ref{for:linear_a}), \tag{\text{Linear investor Constraints 1}}}\\
 &\begin{array}{l}
(\ref{for:dual1_c1_linear}), (\ref{for:dual1_c2}), (\ref{for:dual1_c3}),
(\ref{for:dual1_c4}), (\ref{for:dual1_c5}), (\ref{for:dual1_fr1}),\\
(\ref{for:dual1_fr2}),(\ref{for:linear_delta+}).
\end{array} \tag{\text{Linear Dual Constraints}}
\end{align}

This valid formulation of \ref{for:BLIFP1} requires to set a valid value for the big-$M$ constraint. Setting an appropriate value is important to improve the performance of the resulting MIP. In the following, we prove the existence of a valid upper bound for such a value.

\begin{proposition} \label{pro:boundM1}
Let ${\cal B}(p) $ be the set of all full rank submatrices of the matrix representing the constraints of problem \ref{for:dual1} in standard form, where $p$ is a fixed set of {cost values}, and let $\cal{B}^S$(p) be the set of all matrices that result from ${\cal B}(p)$ replacing, one each time,  their columns by the RHS of that problem.  Moreover, let $\Delta(p):=\min \{ |det(B)|: B\in {\cal B}(p)\}$ and $\Delta^S(p)=\max \{|det(B)|: B\in {\cal B}^S(p)\}$.

Then $\displaystyle  UB_{\delta}:=\max_{p} \Delta^S(p)/ \Delta(p)$ is a valid upper bound for the big-$M$ constant in \ref{for:BLIFP1}.

\end{proposition}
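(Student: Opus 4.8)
The plan is to show that the constant $M$ in \ref{for:linear_delta+} need only be an upper bound for the dual multipliers $\delta_t$ at the dual solution the reformulation actually uses, and then to bound those multipliers by a ratio of sub-determinants through Cramer's rule. First I would pin down the sign of $\delta_t$ and thereby reduce what $M$ must control. From \ref{for:dual1_c5} one has $\delta_t=-\gamma_t-\pi_t\mu$, and combining this with $\gamma_t\le 0$ (\ref{for:dual1_fr1}), $\mu\le 0$ (\ref{for:dual1_fr2}) and $\pi_t\ge 0$ gives $\delta_t\ge 0$ on the whole dual feasible region. Since the McCormick system \ref{for:linear_delta+} exactly represents $\hat{\delta}_{jkt}=a_{jk}\delta_t$ as soon as $0\le \delta_t\le M$, it is enough to produce a value of $M$ that is never exceeded by $\delta_t$ at the dual vectors required by the model.

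Next I would justify restricting attention to vertices of \ref{for:dual1}. In \ref{for:BLIFP1} the vector $(\beta,\mu,\gamma,\delta)$ is forced, via the strong-duality equation \ref{for:strongduality_1} together with primal and dual feasibility, to be an \emph{optimal} solution of \ref{for:dual1} for the cost vector $p$ selected by the binaries $a_{jk}$. Because \ref{for:dual1} is feasible and bounded (its value equals the optimal CVaR of the follower), it admits a basic optimal solution, that is, a vertex of its standard-form polyhedron, which is pointed. Hence imposing an upper bound on $\delta_t$ cannot change the optimal value of \ref{for:BLIFP1} as long as that bound is valid at the vertices, since the optimality certificate demanded by \ref{for:strongduality_1} can always be supplied by a vertex; it therefore suffices to bound $\delta_t$ over the vertices of \ref{for:dual1}.

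The determinant estimate then follows from Cramer's rule. Writing \ref{for:dual1} in standard form $\{z\ge 0 : A(p)z=b(p)\}$ — adding surplus variables and using the established sign $\delta_t\ge 0$ so that no free variable need be split — every vertex is a basic feasible solution $z_B=B^{-1}b(p)$ for some full-rank square submatrix $B\in{\cal B}(p)$, with non-basic coordinates equal to $0$. Cramer's rule writes each basic coordinate as $\det(B^S)/\det(B)$ with $B^S\in{\cal B}^S(p)$ obtained by replacing the relevant column of $B$ with $b(p)$; taking absolute values and using $\delta_t\ge 0$ gives $\delta_t\le |\det(B^S)|/|\det(B)|\le \Delta^S(p)/\Delta(p)$ at every vertex. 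Finally, the data $A(p),b(p)$ depend on $p$ only through the finitely many admissible combinations $p_j=\sum_k c_{jk}a_{jk}$, so maximizing over $p$ yields a single finite value $UB_\delta=\max_p \Delta^S(p)/\Delta(p)$ dominating $\delta_t$ for every choice available to the broker-dealer, which is the claim.

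The step I expect to be the main obstacle is the reduction to vertices: one must be certain that the implicit bound $\delta_t\le UB_\delta$ built into the big-$M$ constraints never cuts off the optimum, and this rests precisely on the existence of a basic optimal dual solution for each feasible $p$. A secondary technical care is to keep the standard-form reduction consistent with the definitions of ${\cal B}(p)$ and ${\cal B}^S(p)$ — in particular that the right-hand-side substitution is performed in the column whose basic variable is $\delta_t$ — so that $\Delta^S(p)/\Delta(p)$ genuinely upper-bounds the quantity of interest and not some unrelated coordinate.
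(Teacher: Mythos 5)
Your proposal is correct and follows essentially the same route as the paper's proof: establish $\delta_t\ge 0$ from (\ref{for:dual1_c5}) and the sign constraints, argue that boundedness of the dual objective (it equals a CVaR value) guarantees the relevant dual solution can be taken at an extreme point, bound each basic coordinate by $\Delta^S(p)/\Delta(p)$ via Cramer's rule, and finish by maximizing over the finitely many admissible cost vectors $p$. Your additional care in justifying that the big-$M$ bound valid at vertices never cuts off the optimum (because the strong-duality certificate in \ref{for:strongduality_1} can always be supplied by a basic optimal solution) makes explicit a step the paper leaves implicit, but it is the same argument in substance.
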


\begin{proof}

It is easy to observe that for each fixed set of {costs} $p$, $M\leq \max_{t=1,...,T}\delta_t$. Therefore the proof reduces to bound the terms $\delta_t$.\par


From constraint (\ref{for:dual1_c5}) in formulation \ref{for:dual1} we know that $\delta_t=-\gamma_t-\pi_t\mu , \quad\forall t=1,...,T,$ which implies that $\delta_t\ge 0$ for all $t=1,...,T$, since $\mu\leq 0$, and $\delta_t\leq 0$,  and $\pi_t \geq 0$ for all $t=1,...,T$.

We observe that $\beta+\mu_0 \mu$ is bounded for any $\mu_0$ and for any set of {costs} $p$ (recall that this o.f. gives a CVaR). If we denote by $r_{max}=\displaystyle \max_{j=1,...,n, t=1,...,T}r_{jt}$, $r_{min}=\displaystyle \min_{j=1,...,n, t=1,...,T}r_{jt}$ {and $c_{max}=\displaystyle \max_{j=1,...,n,\ k=1,...,s_j}c_{jk}$,  then $r_{min}-c_{max}\le \beta+\mu_0 \mu\le r_{max}$}. This implies that the solution of \ref{for:dual1} is attained at an extreme point and therefore no rays have to be considered.
Next, the extreme points of the feasible regions are solutions of systems of full dimensional equations taken from the constraint matrix of \ref{for:dual1} in standard form. Therefore, applying Cramer's rule we obtain that, at the extreme points, the values of any variable $\delta_t$ for all $t=1,\ldots,T$ satisfy: $\delta_t\le \Delta^S(p)/\Delta(p)$. Next, letting $p$ vary on the finite set of possible {costs} we obtain that $\displaystyle \delta_t\le \max_{p} \Delta^S(p)/\Delta(p)$.
\end{proof}

\bigskip

This bound is only of theoretical interest and in our computational experiments, we have set it empirically to be more accurate.

\subsection{Formulation BLIFP2\label{ss:BLIFP2}}
In this section, we derive an alternative formulation for \textbf{BLIFP} based on the representation of the {costs} as $p_jx_j=\sum_{k=1}^{s_j}c_{jk}\hat{a}_{jk}$ in the follower problem before its dual problem is obtained. This artifact produces an alternative single level model that we will analyze in the following.

Let us consider the CVaR problem in \ref{for:BLIFP}, and let us linearize the products of variables $p_ix_i$, as in the previous formulation. This way we obtain:

{
\begin{align*}
\max \  &\displaystyle \eta - \dfrac{1}{\alpha}\sum_{t=1}^T\pi_td_t\\
\nonumber \ \ \ \ \ \hbox{s.t.} \  \tag{\ref{for:CVaR_cy_linear}}&y_t=\sum_{j\in B}r_{jt}\left( \sum_{k=1}^{s_j} \hat a_{jk}\right) +\sum_{j \in R}r_{jt}x_j-\sum_{j \in B}\sum_{k=1}^{s_j}c_{jk}\hat a_{jk}, \hspace*{0.3cm} t=1,...,T, \\
\nonumber&\sum_{t=1}^T\pi_ty_t\ge \mu_0, \tag{\ref{for:CVaR_creturn}} \\
\nonumber & \displaystyle d_t\ge \eta - y_t, \hspace*{0.75cm}  t=1,...,T,  \tag{\ref{for:CVaR_cd}} \\
\nonumber& d_{t}\ge 0, \hspace*{1.7cm}  t=1,...,T, \tag{\ref{for:CVaR_frd}} \\
\tag{\ref{for:CVaR_cx_a}}&\displaystyle \sum_{j \in B}\sum_{k=1}^{s_j}\hat a_{jk}+\sum_{j \in R}x_j \leq 1,  \\
\nonumber& x_{j}\ge 0, \hspace*{1.6cm}  j=1,...,n,\tag{\ref{for:CVaR_frx}} \\
\nonumber& \begin{array}{l}
\displaystyle \hat{a}_{jk}\leq a_{jk}, \hspace*{0.95cm}  j\in B, k=1,...,s_j,\\
\displaystyle \hat{a}_{jk}\geq 0, \hspace*{1.25cm}    j\in B, k=1,...,s_j.
\end{array}  \tag{\ref{for:linear_a}}
\end{align*}
}

Once again, to ease presentation, we write the above formulation in the following compact format.

\begin{align*}
\max \  &\displaystyle \eta - \dfrac{1}{\alpha}\sum_{t=1}^T\pi_td_t\\
\nonumber \ \ \ \ \ \hbox{s.t.} \   &  { (\ref{for:CVaR_cy_linear}), (\ref{for:CVaR_creturn}), (\ref{for:CVaR_cd}), (\ref{for:CVaR_frd}), (\ref{for:CVaR_cx_a}),   (\ref{for:CVaR_frx}), (\ref{for:linear_a}). \tag{\text{Linear investor Constraints 1}}}\\
\end{align*}

Its dual problem is:
\begin{align}
\label{for:dual2_of} \min \ & \displaystyle \beta +\mu_0 \mu +\sum_{j\in B}\sum_{k=1}^{s_j}a_{jk}\sigma_{jk} \tag{\textbf{Dual2}} \\
\nonumber \ \ \ \ \ \hbox{s.t. } &  (\ref{for:dual1_c2}), (\ref{for:dual1_c3}), (\ref{for:dual1_c4}), (\ref{for:dual1_c5}), (\ref{for:dual1_fr1}), (\ref{for:dual1_fr2}), \\
\label{for:dual2_c1} &  \displaystyle \beta-\sum_{t=1}^Tr_{jt}\delta_t+\sum_{t=1}^Tc_{jk}\delta_t+\sigma_{jk}\ge 0, \quad  j\in B, k=1,...,s_j,\\
\label{for:dual2_fr1} & \sigma_{jk}\ge 0, \quad  j\in B, k=1,...,s_j.
\end{align}

Therefore, we can replace in \ref{for:BLIFP} the nested optimization problem on the CVaR including the group of constraints in (Linear investor Constraints 1) and (\ref{for:dual1_c2})-(\ref{for:dual1_fr2}), (\ref{for:dual2_c1}), (\ref{for:dual2_fr1}), that will be referred from now on as (Dual2 Constraints), together with the strong duality condition given by  $$\displaystyle \eta - \dfrac{1}{\alpha}\sum_{t=1}^T\pi_td_t=\beta +\mu_0 \mu +\sum_{j\in B}\sum_{k=1}^{s_j}a_{jk}\sigma_{jk}.$$
The combination of all these elements results in the following  alternative valid formulation for \ref{for:BLIFP}.

\begin{align}
\nonumber\max &\displaystyle \sum_{j\in B} \sum_{k=1}^{s_j}c_{jk}\hat{a}_{jk}\\
\nonumber \ \ \ \ \ \hbox{s.t.} \   &(\ref{for:bank_c2}), (\ref{for:bank_fr}),(\ref{for:bank_frP})  \tag{\text{{Broker-dealer} Constraints}}\\
\label{for:strongduality_2}& \eta - \dfrac{1}{\alpha}\sum_{t=1}^T\pi_td_t=\beta +\mu_0 \mu +\sum_{j\in B}\sum_{k=1}^{s_j}a_{jk}\sigma_{jk}\\
\nonumber & { (\ref{for:CVaR_cy_linear}), (\ref{for:CVaR_creturn}), (\ref{for:CVaR_cd}), (\ref{for:CVaR_frd}), (\ref{for:CVaR_cx_a}),   (\ref{for:CVaR_frx}), (\ref{for:linear_a}), \tag{\text{Linear investor Constraints 1}}}\\
\nonumber & \begin{tabular}{l}
(\ref{for:dual1_c2}),  (\ref{for:dual1_c3}), (\ref{for:dual1_c4}), (\ref{for:dual1_c5}), (\ref{for:dual1_fr1}), (\ref{for:dual1_fr2}),\\
(\ref{for:dual2_c1}), (\ref{for:dual2_fr1}).
\end{tabular} \tag{\text{Dual2 Constraints}}
\end{align}

The formulation above still contains bilinear terms, namely $a_{jk}\sigma_{jk}$,  in constraint (\ref{for:strongduality_2}). Therefore, we linearize them as in \ref{for:BLIFP1} and we obtain another valid MILP formulation for \textbf{BLIFP}.\par

\begin{align}
\label{for:BLIFP2} \tag{\textbf{BLIFP2}}  \max &\displaystyle \sum_{j\in B} \sum_{k=1}^{s_j}c_{jk}\hat{a}_{jk}\\
\nonumber \ \ \ \ \ \hbox{s.t.} \   &(\ref{for:bank_c2}), (\ref{for:bank_fr}),(\ref{for:bank_frP})  \tag{\text{Linear {broker-dealer} Constraints}}\\
\label{for:strongduality_3}& \eta - \dfrac{1}{\alpha}\sum_{t=1}^T\pi_td_t=\beta +\mu_0 \mu +\sum_{j\in B}\sum_{k=1}^{s_j}\hat \sigma_{jk},\\
\nonumber & { (\ref{for:CVaR_cy_linear}), (\ref{for:CVaR_creturn}), (\ref{for:CVaR_cd}), (\ref{for:CVaR_frd}), (\ref{for:CVaR_cx_a}),   (\ref{for:CVaR_frx}), (\ref{for:linear_a}), \tag{\text{Linear investor Constraints 1}}}\\
\nonumber & \begin{tabular}{l}
(\ref{for:dual1_c2}),  (\ref{for:dual1_c3}), (\ref{for:dual1_c4}), (\ref{for:dual1_c5}), (\ref{for:dual1_fr1}), (\ref{for:dual1_fr2}),\\
(\ref{for:dual2_c1}), (\ref{for:dual2_fr1}).
\end{tabular} \tag{\text{Dual2 Constraints}}\\
\label{for:bigM}&\begin{array}{l}
\displaystyle \hat{\sigma}_{jk}\leq \sigma_{jk}, \hspace*{2.8cm}  j\in B, k=1,...,s_j,\\
\displaystyle \hat{\sigma}_{jk} \leq Ma_{jk},\hspace*{2.4cm}  j\in B, k=1,...,s_j,\\
\displaystyle \hat{\sigma}_{jk}\geq \sigma_{jk}-M(1-a_{jk}), \quad  j\in B, k=1,...,s_j\\
\displaystyle \hat{\sigma}_{jk} \geq 0, \hspace*{3.1cm}  j\in B, k=1,...,s_j,
\end{array}
\end{align}

Again, this valid formulation for \ref{for:BLIFP2} requires to prove the existence of a valid upper bound for the big-$M$ constant in (\ref{for:bigM}. In the following, we prove that a valid upper bound for such a value does exist.

\begin{proposition} \label{pro:boundM2}
Let $UB_{\delta}$ be the bound obtained in Proposition \ref{pro:boundM1} and $\displaystyle LB_{\beta}=\min_{p} \Delta^S(p)/ \Delta(p)$. Then $\displaystyle \max\{T(r_{max}-c_{min})UB_{\delta} - LB_{\beta}, 0\}$ is a valid upper bound for $M$ in \ref{for:BLIFP2}.

\end{proposition}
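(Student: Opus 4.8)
The plan is to reduce the statement to a single-variable bound. The constant $M$ enters formulation \ref{for:BLIFP2} only through the McCormick constraints (\ref{for:bigM}), whose purpose is to enforce $\hat\sigma_{jk}=a_{jk}\sigma_{jk}$. These constraints represent the product correctly, and cut off no optimal solution, precisely when $M$ dominates the dual variable, i.e. when $\sigma_{jk}\le M$ for every $j\in B$ and $k=1,\dots,s_j$: indeed, when $a_{jk}=1$ the bound $\hat\sigma_{jk}\le Ma_{jk}$ forces $\sigma_{jk}\le M$, and when $a_{jk}=0$ the bound $\hat\sigma_{jk}\ge\sigma_{jk}-M(1-a_{jk})$ together with $\hat\sigma_{jk}=0$ again forces $\sigma_{jk}\le M$. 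Hence the whole proof reduces to exhibiting a valid upper bound on the variables $\sigma_{jk}$ at an optimal solution, and this is what I would establish.

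First I would locate where $\sigma_{jk}$ occurs. Apart from its sign constraint (\ref{for:dual2_fr1}), the variable $\sigma_{jk}$ appears only in (\ref{for:dual2_c1}) and, through the bilinear term $a_{jk}\sigma_{jk}$, in the strong duality constraint (\ref{for:strongduality_3}). Since increasing $\sigma_{jk}$ can only make (\ref{for:strongduality_3}) harder to satisfy, at an optimal solution one may always take $\sigma_{jk}$ at its minimal feasible value dictated by (\ref{for:dual2_fr1}) and (\ref{for:dual2_c1}), namely
$$\sigma_{jk}=\max\Bigl\{0,\ \sum_{t=1}^T(r_{jt}-c_{jk})\delta_t-\beta\Bigr\}.$$
Thus it suffices to upper bound $\sum_{t=1}^T(r_{jt}-c_{jk})\delta_t-\beta$, and the outer $\max\{\cdot,0\}$ of the statement comes for free from $\sigma_{jk}\ge 0$.

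Next I would bound the two pieces separately, reusing the quantities from Proposition \ref{pro:boundM1}. For the first term, the sign argument of that proof (from (\ref{for:dual1_c5}), with $\gamma_t\le 0$ and $\mu\le 0$) gives $\delta_t\ge 0$, while the Cramer's rule argument gives $\delta_t\le UB_\delta$; combined with $r_{jt}-c_{jk}\le r_{max}-c_{min}$ this yields $\sum_{t=1}^T(r_{jt}-c_{jk})\delta_t\le T(r_{max}-c_{min})UB_\delta$. For the second term I would invoke the lower bound $\beta\ge LB_\beta$. Putting the two estimates together gives $\sigma_{jk}\le\max\{T(r_{max}-c_{min})UB_\delta-LB_\beta,\,0\}$, which is exactly the claimed bound for $M$.

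The main obstacle, and the step I would spend most care on, is justifying the lower bound $\beta\ge LB_\beta$ (and, relatedly, transferring the extreme-point bounds $UB_\delta$ and $LB_\beta$, originally derived for system \ref{for:dual1}, to the variables $\delta_t$ and $\beta$ of the enlarged system \ref{for:dual2}). The delicate point is that Cramer's rule at an extreme point expresses $\beta$ as a ratio of determinants and therefore naturally yields an \emph{upper} estimate $|\beta|\le\Delta^S(p)/\Delta(p)$; obtaining a matching \emph{lower} bound of the form $\beta\ge\min_p\Delta^S(p)/\Delta(p)=LB_\beta$ requires arguing that the numerator determinant does not vanish at the relevant optimal bases and controlling its sign, which is where the real work lies. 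Should a clean lower bound on $\beta$ prove elusive, one can always fall back on $\beta\ge 0$ from (\ref{for:dual1_fr2}), which still delivers the valid, though weaker, bound $T(r_{max}-c_{min})UB_\delta$.
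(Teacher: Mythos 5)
Your proposal follows essentially the same route as the paper's proof: reduce the choice of the big-$M$ to an upper bound on the variables $\sigma_{jk}$, observe that at an optimal solution $\sigma_{jk}$ can be taken at its minimal feasible value $\max\bigl\{0,\ \sum_{t=1}^T(r_{jt}-c_{jk})\delta_t-\beta\bigr\}$, and bound the two pieces by $T(r_{max}-c_{min})UB_{\delta}$ and $LB_{\beta}$, respectively. The delicate step you flag---that Cramer's rule naturally yields upper rather than lower bounds, so $\beta\ge LB_{\beta}$ requires separate justification---is precisely the point the paper dispatches with ``a similar discussion as in Proposition \ref{pro:boundM1}'', so your treatment is, if anything, more careful than the paper's own, and your fallback $\beta\ge 0$ is a safe patch.
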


\begin{proof}
It is easy to observe that $M=\displaystyle \max_{j\in B, k=1,\ldots,s_j}\{\sigma_{jk}\}$ is a valid upper bound.

Since $\sigma_{jk}$ is being minimized (it is minimized in \ref{for:dual2_of}) and it must satisfy constraints  (\ref{for:dual2_c1}) and (\ref{for:dual2_fr1}), there always exists, $\forall j\in B, k=1,...,s_j$,  an optimal solution where these variables get the values:

\begin{equation*}
    \sigma_{jk} = \left\{
	       \begin{array}{ll}
		 0,      & \mathrm{if\ } \beta +\sum_{t=1}^T (c_{jk}-r_{jt})\delta_t \ge 0 \\
		-\beta +\sum_{t=1}^T (r_{jt}-c_{jk})\delta_t, & \mathrm{otherwise.}
	       \end{array}
	     \right.
   \end{equation*}

Because $\beta\geq 0$ by definition, if $\beta +\sum_{t=1}^T (c_{jk}-r_{jt})\delta_t$ is negative, then $\sum_{t=1}^T (c_{jk}-r_{jt})\leq 0$ and therefore $\sum_{t=1}^T (r_{jt}-c_{jk})\geq 0$.

Consequently the maximum value of this variable would be $\max \{0, T(r_{max}-c_{min})UB_{\delta}-LB_{\beta}\}$, where $UB_{\delta}$ and $LB_{\beta}$ are found by doing a similar discussion as in the Proposition \ref{pro:boundM1}. 

%
%
%
\end{proof}

{A first comparison of the above two models, namely \ref{for:BLIFP1} and  \ref{for:BLIFP2}, sheds some light on their problem solving difficulty. For the sake of simplicity, we denote   by $d=\sum_{j\in B} |s_j|$ the number of different admissible costs in the models. Table \ref{t:comparamodelos} shows the number of binary and continuous variables and constraints in both models.
\begin{table}[H]
\begin{center}
\begin{tabular}{|c|c|c||c|}
\hline
                 & Binary & Continuous & Constraints \\ \hline
\ref{for:BLIFP1} &  $d$ & $R+5T+d+dT+3$ & $2|B|+6T+2|R|+2d+4dT+6$ \\ \hline
\ref{for:BLIFP2} & $d$ & $R+4T+3d+3$  & $|B|+5T+|R|+7d+5$ \\ \hline
\end{tabular}
\caption{number of variables and constraints in models \ref{for:BLIFP1} and  \ref{for:BLIFP2} \label{t:comparamodelos}}
\end{center}
\end{table}

The smaller dimension of \ref{for:BLIFP2} explains what we observe later in the computational experience: \ref{for:BLIFP2} is  solved more efficiently  than \ref{for:BLIFP1} (see Section \ref{Computational}).}

\section{Bilevel Investor-leader {broker-dealer}-follower Portfolio Problem (\textbf{ILBFP})} \label{Investor-leader}

For the sake of completeness, in this section, we consider the reverse situation to the one that has been analyzed in Section \ref{Sect:Bank-leader}, i.e., a hierarchical structure in the financial market where the investor acts first and once his portfolio $x$ is chosen the {broker-dealer} sets {transaction costs}. Although one could claim that this situation may be atypical in actual financial markets, we want to analyze this case from a theoretical point of view. Moreover, we wish to analyze its implications depending on different {broker-dealer}s and investors profiles. See Section \ref{Computational} for a comparative analysis. This situation leads to a bilevel leader-follower model in which the investor (leader) has to optimize his utility (maximize the CVaR ensuring a given expected reward, $\mu_0$) by assuming that once he has chosen the portfolio, the {broker-dealer} (follower) will maximize his benefits setting the applicable transaction costs.

We can formulate the problem as:

\begin{align}
\label{for:ILBFP} \max  \ &\displaystyle \eta - \dfrac{1}{\alpha}\sum_{t=1}^T\pi_td_t \tag{\textbf{ILBFP0}} \\
\ \ \ \ \ \hbox{s.t.} \  &\nonumber {(\ref{for:CVaR_cy}), (\ref{for:CVaR_creturn}), (\ref{for:CVaR_cd}), (\ref{for:CVaR_frd}), (\ref{for:CVaR_cx}),   (\ref{for:CVaR_frx}), \tag{\text{Investor Constraints}}}\\
\label{for:bank_follower_of}&p\in arg \max \displaystyle \sum_{j\in B}p_jx_j,\\
\nonumber &\ \ \ \ \ \ \ \ \nonumber \ \ \ \ \hbox{s.t.} \ (\ref{for:bank_c1}), (\ref{for:bank_c2}), (\ref{for:bank_fr}),(\ref{for:bank_frP})  \tag{\text{{Broker-dealer} Constraints}}.
\end{align}

We {state} in the following proposition that if no further polyhedral constraints are imposed on {possible costs}, i.e., $\mathbb{P}=\mathbb{R}^{|B|}_+$, fixing the {transaction costs} to their maximum possible values is always an optimal solution of the follower ({broker-dealer}) problem.

\begin{proposition} \label{pro:bank-follow}
Let \textbf{\ref{for:bank_of} be the follower {broker-dealer} problem, not including constraint (\ref{for:bank_frP}),} in the problem \textbf{ILBFP0}. Let $x$ be a given portfolio and let $\displaystyle p_j^+=\max_{k=1,...,s_j}c_{jk} \quad \forall j \in B$. Then $p_j^+, \ \forall j \in B$, is an optimal solution of \textbf{\ref{for:bank_of}}.
\end{proposition}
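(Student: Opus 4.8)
The plan is to exploit the fact that, once $x$ is fixed, problem \ref{for:bank_of} completely decouples across the securities $j\in B$. The objective $\sum_{j\in B}p_jx_j$ is a sum of terms, each depending on a single $p_j$, and the constraints (\ref{for:bank_c1}), (\ref{for:bank_c2}), (\ref{for:bank_fr}) link the variables $a_{jk}$ only within a fixed index $j$ (each block $\sum_{k=1}^{s_j}a_{jk}=1$, $a_{jk}\in\{0,1\}$ involves only the $k$-indices of that particular $j$, and $p_j$ is determined by them through (\ref{for:bank_c1})). Since no coupling constraint such as (\ref{for:bank_frP}) is present, the problem separates into $|B|$ independent one-dimensional subproblems, one for each $j\in B$.

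First I would characterize the feasible values of each $p_j$. By (\ref{for:bank_c2}) and (\ref{for:bank_fr}), exactly one $a_{jk}$ equals $1$, so by (\ref{for:bank_c1}) the variable $p_j$ ranges precisely over the finite admissible set $\mathbb{P}_j=\{c_{j1},\dots,c_{js_j}\}$; conversely, any value $c_{jk}\in\mathbb{P}_j$ is attainable by setting $a_{jk}=1$ and the remaining entries of that block to $0$. Hence each subproblem is simply $\max\{p_jx_j : p_j\in\mathbb{P}_j\}$.

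Next I would invoke the sign of $x_j$. Because $x$ is a feasible portfolio for the investor, it satisfies the non-negativity requirement $x_j\ge 0$, so the map $p_j\mapsto p_jx_j$ is non-decreasing. Maximizing a non-decreasing function over the finite set $\mathbb{P}_j$ is therefore achieved at its largest element, namely $p_j^+=\max_{k=1,\dots,s_j}c_{jk}$. Assembling these per-security choices, the vector $(p_j^+)_{j\in B}$ is feasible (it violates no constraint, as each block is solved independently) and attains the maximum of each summand simultaneously, hence maximizes the full objective. This establishes optimality.

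There is no genuine obstacle here: the only hypothesis that must be flagged is the non-negativity $x_j\ge 0$, which is exactly what licenses the monotonicity step, and the absence of (\ref{for:bank_frP}), which is what permits the independent per-security maximization. If $x_j$ could be negative the argument would reverse for those coordinates, and if the coupling polyhedron $\mathbb{P}$ were imposed the subproblems would no longer separate, which is precisely the harder regime the paper alludes to after this proposition.
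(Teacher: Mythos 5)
Your proof is correct, and it is exactly the argument the paper has in mind: the paper itself states this proposition without a formal proof, remarking only that when $x$ is known and constraint (\ref{for:bank_frP}) is absent the broker-dealer simply sets each cost to the maximum available value. Your write-up supplies the two ingredients that make this rigorous --- separability of the blocks $(a_{jk})_{k=1,\dots,s_j}$ across $j\in B$, and monotonicity of $p_j\mapsto p_jx_j$ from $x_j\ge 0$ --- and correctly flags that both would fail in the coupled or sign-unrestricted settings.
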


Using the previous result, the \ref{for:ILBFP} can be simplified, in the cases in which constraint (\ref{for:bank_frP}) is not included since the nested optimization problem is replaced by the explicit form of an optimal solution. This results in a valid linear programming formulation to solve the problem.

\begin{align*}
\label{for:ILBFP_LP} \tag{\textbf{ILBFP-LP}}\max  &\displaystyle  \ \eta - \dfrac{1}{\alpha}\sum_{t=1}^T\pi_td_t\\
\nonumber \ \ \ \ \hbox{s.t.} \ &{(\ref{for:CVaR_cy}), (\ref{for:CVaR_creturn}), (\ref{for:CVaR_cd}), (\ref{for:CVaR_frd}), (\ref{for:CVaR_cx}),   (\ref{for:CVaR_frx}), \tag{\text{Investor Constraints}}}\\
&y_t=\sum_{j=1}^nr_{jt}x_j-\left(\sum_{j\in B}p_j^+x_j\right), \quad  t=1,...,T.
\end{align*}

Nevertheless, the above result can not be extended to the case in which a more general polyhedron $\mathbb{P}$ defines the admissible set of transaction costs, and a single level MILP formulation can neither be obtained. To solve \textbf{ILBFP}, in this more general case, we propose an \textit{`ad hoc'} algorithm. To justify its validity we need the following theorem.

\begin{theorem}\label{Theo:ILBFP-Compact}
Let {us define} {$\lambda=\sum_{j\in B}p_jx_j$}, and {let us} denote by $\Omega$ the set containing the feasible {commissions and fees} rates of the {broker-dealer} problem in $\mathbb{P}$, {denoted by $p_{int}$}. The problem \ref{for:ILBFP} is equivalent to:
{
\begin{align*}
\label{for:ILBFP_Compact} \tag{\textbf{ILBFP-Compact}} \max  &\ \displaystyle \eta - \dfrac{1}{\alpha}\sum_{t=1}^Tp_td_t\\
\ \ \ \ \hbox{st.} \ \ &y_t=\sum_{j=1}^nr_{jt}x_j-\left(\lambda\right), \hspace*{0.85cm} t=1,...,T, \\
&\sum_{t=1}^T\pi_ty_t\ge \mu_0,\\
&\displaystyle d_t\ge \eta - y_t, \hspace*{2.5cm} t=1,...,T, \\
& d_{t}\ge 0,\hspace*{3.2cm} t=1,...,T,\\
&\displaystyle \sum_{j=1}^n x_{j}\leq 1,  \\
& x_{j}\ge 0, \hspace*{3.15cm} j=1,...,n,\\
&\lambda\geq \sum_{j \in B} p_{int,j}x_j, \hspace*{1.5cm} p_{int}\in \Omega.\\
\end{align*}
}
\end{theorem}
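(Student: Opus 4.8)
The plan is to eliminate the nested broker-dealer maximization by replacing it with its optimal \emph{value}, exploiting that the leader's (investor's) objective sees the follower's decision $p$ only through the scalar $\lambda=\sum_{j\in B}p_jx_j$. Indeed, $p$ enters the investor model exclusively through the scenario returns $y_t=\sum_{j=1}^n r_{jt}x_j-\sum_{i\in B}p_ix_i=\sum_{j=1}^n r_{jt}x_j-\lambda$ in (\ref{for:CVaR_cy}). Hence, for a fixed portfolio $x$, everything the follower can do to the leader is captured by the optimal value $\lambda^*(x):=\max\{\sum_{j\in B}p_jx_j:\ (\ref{for:bank_c1}),(\ref{for:bank_c2}),(\ref{for:bank_fr}),(\ref{for:bank_frP})\}$ of the broker-dealer problem. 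First I would rewrite \ref{for:ILBFP} as the single-level problem obtained by substituting $\lambda=\lambda^*(x)$ into the investor constraints.

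Next I would give a polyhedral representation of this value function. Because $\sum_{j\in B}p_jx_j$ is linear in $p$ and the broker-dealer feasible region is a polytope (resp.\ finite set) whose extreme feasible cost vectors are collected in $\Omega$, the inner maximum is attained at some $p_{int}\in\Omega$, so that $\lambda^*(x)=\max_{p_{int}\in\Omega}\sum_{j\in B}p_{int,j}x_j$. Introducing $\lambda$ as an explicit variable, the family of inequalities $\lambda\geq\sum_{j\in B}p_{int,j}x_j$ for all $p_{int}\in\Omega$ is then equivalent to the single bound $\lambda\geq\lambda^*(x)$; these are exactly the last constraints of \ref{for:ILBFP_Compact}.

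It remains to argue that at an optimal solution of \ref{for:ILBFP_Compact} this bound is tight, i.e.\ $\lambda=\lambda^*(x)$, which is the step I expect to be the main obstacle, since it is precisely what makes the adversarial lower level collapse and what renders the optimistic/pessimistic distinction irrelevant (the follower value $\lambda^*(x)$ is unique even when the maximizing $p$ is not). The argument is a monotonicity one: the investor objective $\eta-\frac1\alpha\sum_t\pi_t d_t$ is nonincreasing in $\lambda$. Fixing $x$ and $\eta$ and decreasing $\lambda$ towards $\lambda^*(x)$ increases every $y_t$, so the reward constraint (\ref{for:CVaR_creturn}) stays satisfied and each $d_t$ may be lowered to $\max\{0,\eta-y_t\}$ without violating (\ref{for:CVaR_cd})--(\ref{for:CVaR_frd}), which weakly decreases $\frac1\alpha\sum_t\pi_td_t$ and hence weakly increases the objective. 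Therefore there is always an optimal solution of \ref{for:ILBFP_Compact} with $\lambda=\lambda^*(x)$.

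Finally I would verify the equivalence in both directions. Given any feasible $(x,p,\eta,d,y)$ of \ref{for:ILBFP}, the follower optimality of $p$ gives $\sum_{j\in B}p_jx_j=\lambda^*(x)$; setting $\lambda:=\lambda^*(x)$ produces a feasible point of \ref{for:ILBFP_Compact} with the same objective, since $\lambda\geq\sum_{j\in B}p_{int,j}x_j$ for all $p_{int}\in\Omega$ and $y_t=\sum_{j}r_{jt}x_j-\lambda$. Conversely, from an optimal solution of \ref{for:ILBFP_Compact} with $\lambda=\lambda^*(x)$, choosing any $p_{int}\in\Omega$ attaining $\max_{p_{int}\in\Omega}\sum_{j\in B}p_{int,j}x_j$ yields a follower-optimal $p$, and hence a feasible solution of \ref{for:ILBFP} of equal objective. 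The two optimal values therefore coincide and optimal solutions correspond, which establishes the claimed equivalence.
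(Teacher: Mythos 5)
Your proposal is correct and follows essentially the same route as the paper: both replace the nested broker-dealer problem by the finite family of cuts $\lambda \geq \sum_{j\in B}p_{int,j}x_j$, $p_{int}\in\Omega$, and both justify that this epigraph-style relaxation is tight at optimality because the investor's objective is nonincreasing in $\lambda$. The only presentational difference is that you establish this monotonicity by a direct perturbation argument (lowering $\lambda$ raises every $y_t$ and lets each $d_t$ drop to $\max\{0,\eta-y_t\}$), whereas the paper exhibits it algebraically by rewriting the objective as $\eta(1-c_x)+\frac{1}{\alpha}\sum_{t\in\mathbb{T}'}\pi_t\sum_{j}r_{jt}x_j-c_x\lambda$ with $c_x>0$; the two arguments play the identical role.
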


\begin{proof}
We prove first that, maximizing the objective function  $\displaystyle \eta - \dfrac{1}{\alpha} \sum_{t=1}^T \pi_td_t$ in \ref{for:ILBFP} is equivalent to maximizing $\ {\displaystyle \eta (1-c_x)+\frac{1}{\alpha}\sum_{t\in \mathbb{T'}}\sum_{j=1}^n\pi_tr_{jt}x_j-c_x\lambda }$, where $c_x=\sum_{t\in \mathbb{T'}}  \frac{\pi_t}{\alpha}>0 $  and $ \mathbb{T'}:=\{t=1,...,n : \eta -y_t\geq 0\}$.  Observe that the constraints in \ref{for:ILBFP_Compact} imply that $d_t=\max\{0,\eta -y_t\}$ and $y_t=\sum_{j \in B}r_{jt}x_j-\sum_{j \in B}p_jx_j$ for all $t=1,...,T$. Therefore the objective value in the problem satisfies {the following rewriting}:

{\small
\begin{align}
\nonumber \max \eta - \dfrac{1}{\alpha} \sum_{t=1}^T \pi_td_t=&\max \eta - \dfrac{1}{\alpha} \sum_{t=1}^T \pi_t\max\{0,\eta-y_t\}\\
\nonumber &=\max \eta - \dfrac{1}{\alpha} \sum_{t\in \mathbb{T'}} \pi_t (\eta-y_t)\\
\nonumber &=\max \eta(1-c_x) + \frac{1}{\alpha}\sum_{t\in \mathbb{T'}} \pi_t \left(\sum_{j \in B}r_{jt}x_j-\sum_{j \in B}p_jx_j\right)\\
\label{for:theorem_constraint}&=\max \eta(1-c_x) + \frac{1}{\alpha}\sum_{t\in \mathbb{T'}} \pi_t \left(\sum_{j \in B}r_{jt}x_j\right)-c_x \lambda .
\end{align}
}

{Let $\lambda=\sum_{j\in B}p_j x_j$. The expression (\ref{for:theorem_constraint}) proves that the objective function of \ref{for:ILBFP_Compact} depends on $\lambda$ with a negative coefficient.}

Secondly, we have that, for a given portfolio $x$, the optimal value $\bar \lambda$ of the follower problem is

\begin{align*}
\bar \lambda=\max & \displaystyle \sum_{j\in B}p_jx_j\\
 \hbox{s.t.} & \ (\ref{for:bank_c1}), (\ref{for:bank_c2}), (\ref{for:bank_fr}),(\ref{for:bank_frP})  \tag{\text{{Broker-dealer} Constraints}},
\end{align*}

\noindent and it is equivalent to evaluate the objective function in all the feasible points and to choose the largest one: \[\bar \lambda=\max \sum_{j \in B} p_{int,j}x_j,\; \  p_{int}\in \Omega.\]
Since {$c_x\geq 1$}, $\lambda$ is positive, and $\lambda$ is being minimized in (\ref{for:theorem_constraint}), the follower problem in \ref{for:ILBFP}, can be replaced by \[\lambda\geq \sum_{j \in B} p_{int,j}x_j,\;  p_{int}\in \Omega,\]
and the result follows.
\end{proof}

Observe that, if the set of points in $\Omega$ were explicitly known, \ref{for:ILBFP_Compact} would be a MILP compact formulation with very likely an exponential number of constraints for the general case of \ref{for:ILBFP}. However, the points in the set $\Omega$ are usually difficult to enumerate a priori.

The idea of our algorithm is to start with an incomplete formulation of \ref{for:ILBFP_Compact} and reinforce it with a new inequality, coming from a new point in $\Omega$,  after each new iteration of the algorithm.
\bigskip

\noindent {\sc Algorithm 1:}
\begin{description}
\item {\tt Initialization} Choose a feasible portfolio $x^0$. Set $CVaR^{0}=+\infty$
\item {\tt Iteration $\tau=1,2,\ldots$}
\begin{itemize}
\item Solve the {broker-dealer} (follower) problem for $x^{\tau-1}$. Let $p^{\tau}$ be an optimal solution.
\item Solve the incomplete formulation:
\begin{align*}
\label{for:ILBFP_Incomplete} \tag{\textbf{ILBFP-Incomplete$^{\tau}$}} \max  & \displaystyle \ \eta - \dfrac{1}{\alpha}\sum_{t=1}^T\pi_t d_t\\
\ \ \ \ \hbox{st.} \ \ &y_t=\sum_{j=1}^nr_{jt}x_j-\left(\lambda\right), \hspace*{0.85cm} t=1,...,T, \\
&\sum_{t=1}^T\pi_ty_t\ge \mu_0\\
&\displaystyle d_t\ge \eta - y_t, \hspace*{2.5cm} t=1,...,T, \\
& d_{t}\ge 0,\hspace*{3.2cm} t=1,...,T,\\
&\displaystyle \sum_{j=1}^n x_{j}\leq 1,  \\
& x_{j}\ge 0, \hspace*{3.15cm} j=1,...,n,\\
&\lambda\geq \sum_{j \in B} p_j^{\nu}x_j, &  \nu=1,...,\tau.\\
\end{align*}

Let $\chi^{\tau}=(x^{\tau}, y^{\tau}, \eta^{\tau}, d^{\tau})$, and let $(\chi^{\tau}, \lambda^{\tau})$ be an optimal solution and $CVaR^{\tau}$ the optimal value.
\begin{itemize}
\item If $(\chi^{\tau}, \lambda^{\tau})$ is feasible in \ref{for:ILBFP_Incomplete}, $(\chi^{\tau -1 },p^{\tau})$ are optimal solutions of \ref{for:ILBFP}, and $CVaR^{\tau}$ the optimal value. END.
\item If $(\chi^{\tau}, \lambda^{\tau})$ is not feasible in \ref{for:ILBFP_Incomplete},  go to iteration $\tau:=\tau+1$.
\end{itemize}

\end{itemize}
\end{description}


We prove in the following result the optimality of the solution obtained in Algorithm 1 and also its finiteness.

\begin{theorem}
Algorithm 1 finishes in a finite number of iterations with an optimal solution of \ref{for:ILBFP}.
\end{theorem}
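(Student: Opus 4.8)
The plan is to read Algorithm~1 as a constraint-generation (cutting-plane) scheme for the single-level reformulation \ref{for:ILBFP_Compact}, and to prove separately that (i) upon termination the returned point is optimal, and (ii) termination occurs after finitely many iterations. The structural fact underlying both parts is that the set $\Omega$ of feasible commission vectors of the follower is \emph{finite}: every feasible $p$ is pinned down by the binary assignment through $p_j=\sum_k c_{jk}a_{jk}$ with $\sum_k a_{jk}=1$, so that $p$ ranges over a subset of $\prod_{j\in B}\{c_{j1},\dots,c_{js_j}\}$ intersected with $\mathbb{P}$; hence $|\Omega|\le \prod_{j\in B}s_j$. Throughout I write $\bar\lambda(x)=\max_{p_{int}\in\Omega}\sum_{j\in B}p_{int,j}x_j$ for the follower's optimal value at a portfolio $x$, so that the full family of \ref{for:ILBFP_Compact} cuts is exactly the single condition $\lambda\ge\bar\lambda(x)$.

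For correctness, first I would observe that each master \ref{for:ILBFP_Incomplete} is a \emph{relaxation} of \ref{for:ILBFP_Compact}: its cuts $\lambda\ge\sum_{j\in B}p_j^{\nu}x_j$, $\nu=1,\dots,\tau$, form a subfamily of the full family $\{\lambda\ge\sum_{j\in B}p_{int,j}x_j:p_{int}\in\Omega\}$, because every generated $p^{\nu}$ lies in $\Omega$. Consequently the master value $CVaR^{\tau}$ is an upper bound for the optimal value of \ref{for:ILBFP_Compact}, which by Theorem~\ref{Theo:ILBFP-Compact} equals the optimal value of \ref{for:ILBFP}. The stopping test then supplies the matching lower bound: solving the follower for the current portfolio yields a $p$ attaining $\bar\lambda(x)$, so if the incumbent already satisfies the cut induced by this $p$, it satisfies $\lambda\ge\bar\lambda(x)$ and therefore \emph{every} constraint $\lambda\ge\sum_{j\in B}p_{int,j}x_j$, $p_{int}\in\Omega$. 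The incumbent is thus feasible for \ref{for:ILBFP_Compact} while attaining the relaxation value $CVaR^{\tau}$; since a feasible point of a maximization cannot exceed the optimum and the relaxation value cannot fall below it, the two coincide and the incumbent is optimal, whence the equivalence of Theorem~\ref{Theo:ILBFP-Compact} transfers optimality to \ref{for:ILBFP}.

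For finiteness, the key claim is that no follower response is generated twice along non-terminating iterations. Indeed, suppose the response $p^{\tau}$ computed at iteration $\tau$ coincided with some earlier $p^{\nu}$, $\nu<\tau$. Then the cut $\lambda\ge\sum_{j\in B}p_j^{\tau}x_j$ is already present in the master, so the incumbent satisfies it, and by the argument above the incumbent is feasible for \ref{for:ILBFP_Compact}, firing the stopping criterion. Contrapositively, any iteration that does not terminate introduces a genuinely new element of $\Omega$; as $\Omega$ is finite, at most $|\Omega|\le\prod_{j\in B}s_j$ such iterations can occur, so the algorithm halts after finitely many steps. I would also record, for completeness, that every master is bounded and solvable: $\lambda$ enters the objective with a negative coefficient ($-c_x$ with $c_x>0$) and is bounded below by each present cut via $\sum_{j\in B}p_j^{\nu}x_j\ge 0$, while $\eta$ and the $d_t$ stay within the usual return-based bounds on the simplex-type region $\sum_j x_j\le 1,\ x\ge 0$.

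The hard part will be making the termination-feasibility equivalence fully rigorous, namely the step that satisfying the \emph{single} cut coming from the follower's optimal response forces feasibility of the entire $\Omega$-indexed family; this rests precisely on the identity $\bar\lambda(x)=\max_{p_{int}\in\Omega}\sum_{j\in B}p_{int,j}x_j$, i.e. on $\bar\lambda$ being the pointwise maximum of the generated linear forms, which is where the optimality characterization of the follower problem enters. Once that equivalence is established, matching the algorithm's indices $x^{\tau-1},p^{\tau}$ to the newly added cut and tallying distinct elements of $\Omega$ is routine bookkeeping.
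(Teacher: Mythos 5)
Your proof is correct and follows essentially the same route as the paper's: both read Algorithm~1 as constraint generation for \ref{for:ILBFP_Compact}, obtain finiteness from the finiteness of $\Omega$ together with the observation that a repeated follower response triggers the stopping test, and certify optimality by combining the relaxation bound $CVaR^{\tau}\ge CVaR^{*}$ with the fact that satisfying the single cut generated by the follower's optimal response forces satisfaction of the whole $\Omega$-indexed family. The only cosmetic difference is that you close the argument at the level of \ref{for:ILBFP_Compact} and then invoke the equivalence of Theorem~\ref{Theo:ILBFP-Compact}, whereas the paper verifies directly that $(\chi^{\tau-1},p^{\tau})$ is feasible in \ref{for:ILBFP}, using the $\lambda$-minimization argument to recover constraint (\ref{for:CVaR_cy}).
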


\begin{proof}
We start guaranteeing the finiteness of the algorithm. On the one hand, the number of feasible solutions of the {broker-dealer} problem is finite, then the number of different cuts $\lambda\geq \sum_{j \in B} p_j^{\tau}x_j$ that can be added to the incomplete formulation is also finite. On the other hand, if a repeated cut is added then, $x^{\tau -1}$ is feasible in \textbf{ILBFP-Incomplete$^{\tau}$}, since \textbf{ILBFP-Incomplete$^{\tau}$} is equal to \textbf{ILBFP-Incomplete$^{\tau-1}$}, and then the algorithm stops. Therefore the algorithm finishes in a finite number of iterations.

We continue now proving the optimality of the solution obtained. Let us denote by $CVaR^*$ the optimal value of \ref{for:ILBFP}, that by Theorem \ref{Theo:ILBFP-Compact} is also the optimal value of \ref{for:ILBFP_Compact}.

First, assume that $(\chi^{\tau -1}, \lambda^{\tau-1})$ satisfies the stopping criterion. Then, it is clear that $(\chi^{\tau -1}, \lambda^{\tau-1})$ is also feasible in \textbf{ILBFP-Incomplete$^{\tau}$} and $CVaR^{\nu} \leq CVaR^{\nu-1}$ for all $\nu=1,...,\tau$, by construction. Hence, $(\chi^{\tau}, \lambda^{\tau})$ is also optimal in \textbf{ILBFP-Incomplete$^{\tau}$} and $CVaR^{\tau-1} = CVaR^{\tau}$.

Second, we have that $CVaR^*\leq CVaR^{\tau}$ always holds, since the polyhedron describing the feasible region of \ref{for:ILBFP_Compact} is included in the one defining the feasible region in \textbf{ILBFP-Incomplete$^{\tau}$}.

Finally, we have that if $(\chi^{\tau -1 },p^{\tau})$ is feasible in \ref{for:ILBFP}, then $CVaR^*= CVaR^{\tau}$ and it is an optimal solution of \ref{for:ILBFP}. Therefore, it remains to prove that $(\chi^{\tau -1 },p^{\tau})$ is feasible in \ref{for:ILBFP}.

Clearly $\chi^{\tau -1 }$ verifies constraints (\ref{for:CVaR_cd}),  (\ref{for:CVaR_cx}), (\ref{for:CVaR_creturn}), (\ref{for:CVaR_frd}), (\ref{for:CVaR_frx}), since they are all included in the incomplete formulation, and also,  $x^{\tau -1 }$, $p^{\tau}$ verify constraints $p\in arg \max \displaystyle \sum_{j\in B}p_jx_j$, (\ref{for:bank_c1}), (\ref{for:bank_c2}), (\ref{for:bank_fr}) and (\ref{for:bank_frP}), since

\begin{align*}
p^{\tau} \in arg \max \displaystyle \sum_{j\in B}p_jx_j^{\tau-1}\\
\nonumber \ \ \ \ \ \ \ \ \nonumber \ \ \ \ \hbox{s.t.} \ (\ref{for:bank_c1}), (\ref{for:bank_c2}), (\ref{for:bank_fr}),(\ref{for:bank_frP})  \tag{\text{{Broker-dealer} Constraints}}.
\end{align*}

To complete the proof we need to check that constraint (\ref{for:CVaR_cy}) is also satisfied.

Since $p^{\tau} \in arg \max \displaystyle \sum_{j\in B}p_jx_j^{\tau-1}$, then $\sum_{j \in B} p_j^{\tau}x_j^{\tau -1 }\geq \sum_{j \in B} p_jx_j^{\tau -1 }$ for any {cost} $p$ verifying (\ref{for:bank_c1}), (\ref{for:bank_c2}), (\ref{for:bank_fr}) and (\ref{for:bank_frP}). Using the same arguments that in Theorem \ref{Theo:ILBFP-Compact} it follows that variable $\lambda$ is being minimized in \textbf{ILBFP-Incomplete$^{\tau}$}, thus $\lambda^{\tau}= \sum_{j \in B} p_j^{\tau}x_j^{\tau -1 }$ and then constraint (\ref{for:CVaR_cy}) holds.

\end{proof}

\section{The Maximum Social Welfare Problem (\textbf{MSWP})} \label{Cooperative}
In some actual situations, the investor and the broker-dealer may have an incentive to work together to improve the social welfare of society. They can agree to cooperate and share risk and benefits to improve, in this way, their solutions by designing a joint strategy.\par

We also analyze this problem for the sake of completeness and to compare the performance of this situation where none of the parties has a hierarchical position over the other one. We think that even if the actual implementation of the cooperative model may be difficult, in a competitive actual market, one may gain some insights into the problem through analysis.

In {the} social welfare model, we assume that both, broker-dealer and investor, cooperate. {Let $0<\xi<1$ be the marginal rate of substitution between the two objectives. That is, the rate at which one of the parties can give up some units of one of the objective functions in exchange for another unit of the other one while maintaining the same overall value. Then,} the cooperative version of the  problem can be written as a weighted sum of the two objective functions of each party in the feasible region delimited by the constraints of both problems:

\begin{align*}
 \max \ &\displaystyle \xi \sum_{j\in B}p_jx_j+(1-\xi)\left(\eta - \dfrac{1}{\alpha}\sum_{t=1}^T\pi_td_t \right)\\
\ \ \ \ \ \hbox{s.t.} \  & (\ref{for:bank_c1}), (\ref{for:bank_c2}), (\ref{for:bank_fr}), (\ref{for:bank_frP}) \tag{\text{{Broker-dealer} Constraints}},\\
& \\
&{(\ref{for:CVaR_cy}), (\ref{for:CVaR_creturn}), (\ref{for:CVaR_cd}), (\ref{for:CVaR_frd}), (\ref{for:CVaR_cx}),   (\ref{for:CVaR_frx}) \tag{\text{Investor Constraints}}}.
\end{align*}

The above problem can be modeled as a MILP by linearizing the products of variables $a_{jk}x_j, \forall {j\in B}$ following the same scheme as in Section \ref{Sect:Bank-leader}:

\begin{align*}
\label{for:CoopP} \tag{\textbf{MSWP0}} \max \ &\displaystyle \xi \sum_{j\in B}\sum_{k=1}^{s_j} c_{jk}\hat{a}_{jk}+(1-\xi)\left(\eta - \dfrac{1}{\alpha}\sum_{t=1}^T\pi_td_t \right)\\
\ \ \ \ \ \hbox{s.t.} \  & \quad (\ref{for:bank_c2}), (\ref{for:bank_fr}), (\ref{for:bank_frP}) \tag{\text{Linear {broker-dealer} Constraints}},\\
& \\
\nonumber & { (\ref{for:CVaR_cy_linear}), (\ref{for:CVaR_creturn}), (\ref{for:CVaR_cd}), (\ref{for:CVaR_frd}), (\ref{for:CVaR_cx_a}),   (\ref{for:CVaR_frx}), (\ref{for:linear_a}). \tag{\text{Linear investor Constraints 1}}}\\
\end{align*}

{For simplicity, in the remaining, we} consider an unweighted maximum social welfare model where the two objective functions $\sum_{j\in B}\sum_{k=1}^{s_j} c_{jk}{a}_{jk}$ ({broker-dealer}) and $\eta - \dfrac{1}{\alpha}\sum_{t=1}^T\pi_td_t$ (investor) are simply added. The following result proves that cooperation is always profitable for both parties in that the joint return exceeds the sum of individual returns of each of them.

\begin{proposition} \label{prop:social_welfare}
An optimal solution of the unweighted maximum social welfare {problem} induces an objective value that is greater than or equal to the sum of the optimal returns of the two parties in the same bilevel problem in any of the hierarchical {problems}.
\end{proposition}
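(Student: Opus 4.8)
The plan is to exploit the fact that the unweighted social welfare objective is \emph{exactly} the sum of the two parties' objectives, whereas its feasible region is a relaxation of the feasible region of either hierarchical problem. Concretely, I would denote by $Z^{\mathrm{MSWP}}$ the optimal value of the unweighted problem
\begin{align*}
\max\ & \sum_{j\in B} p_j x_j + \left(\eta - \frac{1}{\alpha}\sum_{t=1}^T \pi_t d_t\right)\\
\text{s.t.}\ & \text{(Broker-dealer Constraints), (Investor Constraints)},
\end{align*}
and observe that its feasible region is precisely the set of tuples $(p,x,y,\eta,d)$ satisfying both the pricing feasibility constraints (\ref{for:bank_c1})--(\ref{for:bank_frP}) and the portfolio constraints (\ref{for:CVaR_cy})--(\ref{for:CVaR_frx}), with \emph{no} bilevel (arg max) requirement linking the two blocks.

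First I would fix an optimal solution of one hierarchical problem, say \ref{for:BLIFP} (the argument for \ref{for:ILBFP} is identical up to swapping the roles of the two players). Let $(\bar p,\bar x,\bar y,\bar\eta,\bar d)$ be such a solution. By construction of the bilevel model, $\bar p$ satisfies the Broker-dealer Constraints and, since $\bar x$ lies in the arg max of the investor's problem for the prices $\bar p$, the tuple $(\bar x,\bar y,\bar\eta,\bar d)$ satisfies the Investor Constraints as well. Hence the whole tuple satisfies the union of the two constraint sets, which is all that the MSWP demands, so it is feasible for the MSWP.

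Next I would evaluate the social welfare objective at this feasible point. By definition the broker-dealer's return attained in \ref{for:BLIFP} is $\sum_{j\in B}\bar p_j\bar x_j$ and the investor's return is $\bar\eta - \frac{1}{\alpha}\sum_{t=1}^T \pi_t\bar d_t$; their sum is exactly the value the MSWP objective takes at $(\bar p,\bar x,\bar y,\bar\eta,\bar d)$. Since this point is feasible for the MSWP and the MSWP maximizes that same objective, I conclude $Z^{\mathrm{MSWP}} \ge \sum_{j\in B}\bar p_j\bar x_j + \big(\bar\eta - \frac{1}{\alpha}\sum_{t=1}^T \pi_t\bar d_t\big)$, i.e. the social welfare optimum is at least the sum of the two parties' returns in \ref{for:BLIFP}. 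Repeating the reasoning verbatim with an optimal solution of \ref{for:ILBFP} yields the claim for that hierarchy, which covers all hierarchical problems.

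The argument is in essence a relaxation/dominance observation, so there is no hard technical computation; the only step demanding care is verifying that a hierarchical optimum is genuinely feasible for the MSWP, that is, that dropping the arg max linking constraint merely enlarges the admissible set while still enforcing every constraint the social welfare model imposes. Once this is checked the inequality is immediate, because the MSWP objective is literally the sum of the two individual objectives and is being maximized over a superset of the hierarchical feasible region.
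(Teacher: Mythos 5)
Your proposal is correct and follows essentially the same route as the paper's own proof: both arguments rest on the observation that the MSWP constraints are exactly the union of the broker-dealer and investor constraint blocks without the arg max coupling, so any optimal solution of a hierarchical problem is feasible for the MSWP, and the unweighted welfare objective evaluated there equals the sum of the two parties' returns. Your write-up merely spells out the feasibility check and the evaluation step more explicitly than the paper does.
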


\begin{proof}
Any feasible solution of \ref{for:BLIFP} and \ref{for:ILBFP} is feasible in \ref{for:CoopP} since all the constraints in this last problem appear in the two former formulations. Therefore, the feasible region of \ref{for:CoopP} includes the feasible regions of both, \ref{for:BLIFP} and \ref{for:ILBFP} and the result follows.
\end{proof}

\subsection{Benders decomposition}

We can also obtain a Benders decomposition, {\cite{ben62},} in order to state a Benders like algorithm to solve \ref{for:CoopP}, and compare the performance of both proposed methods to solve the problem.\par

Recall that the unweighted maximum welfare {problem} can be written as:
\begin{align*}
 \max \ &\displaystyle \sum_{j\in B}p_jx_j+\left(\eta - \dfrac{1}{\alpha}\sum_{t=1}^T\pi_td_t \right)\\
\ \ \ \ \ \hbox{s.t.} \  & (\ref{for:bank_c1}), (\ref{for:bank_c2}), (\ref{for:bank_fr}), (\ref{for:bank_frP}) \tag{\text{{Broker-dealer} Constraints}},\\
& \\
&{(\ref{for:CVaR_cy}), (\ref{for:CVaR_creturn}), (\ref{for:CVaR_cd}), (\ref{for:CVaR_frd}), (\ref{for:CVaR_cx}),   (\ref{for:CVaR_frx}) \tag{\text{Investor Constraints}}}.
\end{align*}
%

In order to apply Benders decomposition we reformulate \ref{for:CoopP} as follows:

\begin{align*}
 \label{for:CoopP'} \tag{\textbf{MSWP1}} \max &\displaystyle \sum_{j\in B}\sum_{k=1}^{s_j} c_{jk}\hat{a}_{jk}+q(y)\\
 \hbox{s.t.} &\quad (\ref{for:bank_c2}), (\ref{for:bank_fr}), (\ref{for:bank_frP}) \tag{\text{Linear {broker-dealer} Constraints}}\\
&\begin{array}{lll}
\displaystyle \hat{a}_{jk}\leq a_{jk},&  j\in B, k=1,...,s_j,\\
\displaystyle \hat{a}_{jk}\geq 0, &  j\in B, k=1,...,s_j,\\
\end{array}\tag{\ref{for:linear_a}}\\
\tag{\ref{for:CVaR_cy_linear}}&y_t=\sum_{j\in B}r_{jt}\left( \sum_{k=1}^{s_j} \hat a_{jk}\right) +\sum_{j \in R}r_{jt}x_j-\sum_{j \in B}\sum_{k=1}^{s_j}c_{jk}\hat a_{jk}, \hspace*{0.3cm} t=1,...,T, \\
\nonumber&\sum_{t=1}^T\pi_ty_t\ge \mu_0, \tag{\ref{for:CVaR_creturn}} \\
\tag{\ref{for:CVaR_cx_a}}&\displaystyle \sum_{j \in B}\sum_{k=1}^{s_j}\hat a_{jk}+\sum_{j \in R}x_j \leq 1,  \\
\nonumber& \tag{\ref{for:CVaR_frx}} x_{j}\ge 0, \hspace*{1.4cm} j \in R,\\
\end{align*}

where
\begin{align*}
 q(y)= \max & \; \displaystyle\;  \eta - \dfrac{1}{\alpha}\sum_{t=1}^T\pi_td_t\\
\mbox{s.t.: } & d_t-\eta \geq -y_t, \hspace*{0.6cm}  t=1,...,T,\\
&d_t\geq 0, \hspace*{1.7cm} t=1,...,T.
\end{align*}

Note that in $q(y)$ we are essentially computing the CVaR for the given solution $\{y_t:t=1,\ldots,T\}$.

Computing again its dual problem, the evaluation of $q(y)$ can also be obtained as:

\begin{align*}
\hspace*{-1cm} \label{for:PrimalP}\tag{\textbf{PrimalP}} q(y)= \min & \sum_{t=1}^T -\gamma_t y_t \\
\mbox{s.t.: } & \gamma_t\geq \frac{-\pi_t}{\alpha}, \quad  t=1,...,T,\\
&-\sum_{t=1}^T \gamma_t=1,\\
&\gamma_t\leq 0.
\end{align*}

Observe that the above problem, which we define as the Primal Problem, is a continuous knapsack problem with lower bounds, therefore it is well known that it can be solved by inspection. It suffices to sort non-increasingly the $y_t$ values and assigning, in that order, to each variable $\gamma_t$  the minimum feasible amount. \par

Note that in the above formulation the feasible region does not depend on the variables in \ref{for:CoopP'}, so if we denote by $\Omega$ the set of extreme point solutions of the feasible region of \ref{for:PrimalP}, $q(y)$ is equivalent to:

\begin{align}
\nonumber q(y)=\max &\quad q \\
  \hbox{s.t. } & \label{for:Benders}\displaystyle q\leq  \sum_{t=1}^T -\gamma_t^{\tau} y_t, \quad  \gamma^{\tau} \in \Omega.
\end{align}

Therefore, the problem \ref{for:CoopP} with discrete {costs} can be written as:

\begin{align*}
 \label{for:MasterP} \tag{\textbf{MasterP}} \max &\displaystyle \sum_{j\in B}\sum_{k=1}^{s_j} c_{jk}\hat{a}_{jk}+q\\
 \hbox{s.t.} &\quad (\ref{for:bank_c2}), (\ref{for:bank_fr}), (\ref{for:bank_frP}) \tag{\text{Linear {broker-dealer} Constraints}}\\
&\begin{array}{lll}
\displaystyle \hat{a}_{jk}\leq a_{jk},&  j\in B, k=1,...,s_j,\\
\displaystyle \hat{a}_{jk}\geq 0, &  j\in B, k=1,...,s_j,\\
\end{array}\tag{\ref{for:linear_a}}\\
\tag{\ref{for:CVaR_cy_linear}}&y_t=\sum_{j\in B}r_{jt}\left( \sum_{k=1}^{s_j} \hat a_{jk}\right) +\sum_{j \in R}r_{jt}x_j-\sum_{j \in B}\sum_{k=1}^{s_j}c_{jk}\hat a_{jk}, \hspace*{0.3cm} t=1,...,T, \\
\nonumber&\sum_{t=1}^T\pi_ty_t\ge \mu_0, \tag{\ref{for:CVaR_creturn}} \\
\tag{\ref{for:CVaR_cx_a}}&\displaystyle \sum_{j \in B}\sum_{k=1}^{s_j}\hat a_{jk}+\sum_{j \in R}x_j \leq 1,  \\
\nonumber& \tag{\ref{for:CVaR_frx}} x_{j}\ge 0, \hspace*{1.4cm} j \in R,\\
&q\leq  \sum_{t=1}^T \gamma^{\tau} y_t,\hspace*{0.5cm}  \gamma^{\tau} \in \Omega. \tag{\ref{for:Benders}}
\end{align*}

This analysis allows us to state a Benders algorithm as follows:
\bigskip

\noindent {\sc Benders Algorithm:}

\begin{description}
\item {\tt Initialization} Choose a solution $y^0$ of the master problem, solve the primal problem \ref{for:PrimalP} for the chosen $y^0$. Let $\gamma^0$ be an optimal solution for \ref{for:PrimalP} under $y^0$ and $q(y^0)$ the corresponding optimal value. Take $\mathbf{\Upsilon}=\{\gamma^0\}$ and go to iteration $\tau=1$.
\item {\tt Iteration $\tau=1,2,\ldots$} Solve the master problem $\ref{for:MasterP}$ replacing $\Omega$ with $\Upsilon$. Let $y^*$ and $q^*$ be optimal solutions of such problem.
    \begin{itemize}
    \item If $\tau=1$ and $q(y^0)=q^*$. END.
    \item If $\tau>1$ and $q(y^*)=q^*$. END.
    \item Otherwise, solve the primal problem \ref{for:PrimalP} for $y=y^*$. 	 Let $\gamma^*$ be an optimal solution of such problem. Take $\gamma^{\tau}=\gamma^*$,  			 $\mathbf{\Upsilon}=\mathbf{\Upsilon}\cup\{\gamma^{\tau}\}$,  and go to iteration $\tau:=\tau+1$.
    \end{itemize}
\end{description}

\section{Computational study and empirical application} \label{Computational}

This section is devoted to reporting some numerical experiments conducted to:
1) compare the effectiveness of the methods proposed to solve the different {problems}; 2) analyze the form of the solutions within each {model}; and 3) compare the profiles of the solutions, in terms of net values for the broker-dealer and expected return for the investor,  across the three {defined problems}.

The computational experiments were carried out on a personal computer with Intel(R) Core(TM) i7-2600 {\scriptsize CPU}, 3.40GHz with 16.0 GB RAM. The algorithms and formulations were implemented and solved by using Xpress IVE 8.0.

{In order to conduct the computational study, we {take} historical data from Dow Jones Industrial Average. We {considered} daily returns of the 30 assets during one year ($T=251$ scenarios), and these $T$ historical periods are considered as equiprobable scenarios ($\pi_t=1/T$). Furthermore, to {perform} a richer comparison, we consider different types of instances for the broker-dealer sets of possible {transaction costs} and different risk profiles for the investor.

We assume that the broker-dealer charges transaction costs in a subset $B$ of the securities. In the instances we generated we {compare} the following cardinals for the set $B$: $|B|= 30, 20, 10$.  In addition, each {cost} $p_j, j\in B$ was chosen {from a discrete set $\mathbb{P}_j=\{c_{j1},...,c_{js_j}\}$  of admissible values} . These parameters $s_j$ were randomly generated in the interval $[0,K]$ with $K=5,15,50$.}

The next table gathers the nine different types of instances (A to I) considered in our computational study:

\bigskip

\begin{table}[H]
\begin{center}
\begin{tabular}{c|c|c|c}
           &     $K=5$ &    $K=15$ &   $K=50$ \\
\hline
 $|B|=30$ & {\bf A} & {\bf B} & {\bf C} \\
\hline
 $|B|=20$ & {\bf D} & {\bf E} & {\bf F} \\
\hline
 $|B|=10$ & {\bf G} & {\bf H} & {\bf I} \\
\end{tabular}
\caption{\label{table:instances} {{\footnotesize Types of instances for the sets of possible {costs} depending on the values of $|B|$ and $K$}}}
\end{center}
\end{table}

{Once the set $B$ and the parameter $s_j$ were set for each type of instance (A-I), we generate the possible {transaction costs} $c_{ij}$ as follows:

\begin{itemize}
\item randomly generated in the interval $[0.001,0.003]$ (\textit{cheaper} {costs}) in approximately $15\%$ of the securities,
\item randomly generated in the interval $[0.002,0.008]$ (\textit{normal} {costs}) in approximately $70\%$ of the securities,
\item randomly generated in the interval $[0.006,0.010]$ (\textit{more expensive} {costs}) in approximately $15\%$ of the securities.
\end{itemize}

}

For each type of instance defined in Table \ref{table:instances}, five different instances are generated and the average values are reported in all the tables and figures.

{Different investor profiles are also considered varying the values of parameters $\mu_0$ and $\alpha$. We assume three thresholds for the expected return $\mu_0=0.0,0.05, 0.1$. This way, we are modeling investors willing not to lose anything, or to win at least, $5\%$ or $10\%$ of their invested amount. In addition, we consider five different CVaR risk levels, $\alpha=0.01,0.05,0.5,0.9$. Note that usually, the smaller the $\alpha$, the higher the risk-aversion.}

\subsection{Comparing solution methods}
This section compares the computational performance of the different methods proposed to solve each one of the {problems}.

For the first {problem}, \textbf{BLIFP}, we proposed two different formulations: \ref{for:BLIFP1} and \ref{for:BLIFP2}. We show in all our tables, the average CPU time expressed in seconds (CPU) and the number of {instances} (\#) solved to optimality (out of 5) for each formulation, with a time limit of 3600 seconds.

Table \ref{table: model1} is organized in three blocks of rows. Each block reports results for $\mu_0=0.0,0.05, 0.1$, respectively. Each row in the table refers to a type of instance ($A,\ldots,I$). The columns are also organized in four blocks. Each block reports the results for a different risk level ($\alpha$).

{It can be observed that \ref{for:BLIFP2} is always faster and  it solves a higher number of problems than \ref{for:BLIFP1} to optimality. {As anticipated in Section \ref{ss:BLIFP2} this behavior is explained by the smaller dimension of \ref{for:BLIFP2} in terms of variables and constraints.} For example, when $\alpha=0.5$ and $\mu=0.0$, \ref{for:BLIFP2} is able to solve all the instances of types D and H in few seconds, while \ref{for:BLIFP1} is not able to solve any of these instances. Therefore, we conclude that formulation \ref{for:BLIFP2} is more effective than \ref{for:BLIFP1} for solving  {\textbf{BLIFP}}.}

{The second {problem} in our analysis is the one presented in Section \ref{Investor-leader}, namely \textbf{ILBFP}. For this situation, we have proposed a single level LP formulation \ref{for:ILBFP_LP} and Algorithm 1 to solve the problem. We report the results concerning this model (when no additional constrains on {transaction costs} are imposed in the set of {costs}) in Table \ref{table: model2}. It can be observed that the compact formulation is faster than the algorithm: all the instances can be solved by using the LP formulation in less than 7 seconds, meanwhile, the algorithm needs more than 100 seconds to solve some of them. However, the Algorithm 1 is also able to solve all the instances, and, as discussed in Section \ref{Investor-leader}, it can also be used when more general sets of {costs} are considered. }

{Finally, for the social welfare {problem,} \textbf{MSWP}, we have also proposed another single level formulation \ref{for:CoopP} and a Benders' like algorithm. The primal problems in the Benders Algorithm were solved by using the inspection method described in the previous section. We report the results concerning this model in Table \ref{table: model3} with the same layout as Table \ref{table: model2}. It can be observed that again the compact formulations is much faster than the algorithm. In spite of that, the algorithm is also able to solve the considered instances. }

{
\begin{table}[H]
\resizebox{\textwidth}{!}{%
\begin{tabular}{rr|rrrr|rrrr|rrrr|rrrr|}

           &            &         \multicolumn{ 4}{c}{{\bf $\alpha=0.05$}} &         \multicolumn{ 4}{c}{{\bf $\alpha=0.1$}} &         \multicolumn{ 4}{c}{{\bf $\alpha=0.5$}} &         \multicolumn{ 4}{c}{{\bf $\alpha=0.9$}}  \\

      $\mu_0$      &           & \multicolumn{ 2}{c}{\ref{for:BLIFP1}} & \multicolumn{ 2}{c}{\ref{for:BLIFP2}} & \multicolumn{ 2}{c}{\ref{for:BLIFP1}} & \multicolumn{ 2}{c}{\ref{for:BLIFP2}} & \multicolumn{ 2}{c}{\ref{for:BLIFP1}} & \multicolumn{ 2}{c}{\ref{for:BLIFP2}} & \multicolumn{ 2}{c}{\ref{for:BLIFP1}} & \multicolumn{ 2}{c}{\ref{for:BLIFP2}} \\

           &            &        {\scriptsize CPU} &       {\scriptsize \#} &        {\scriptsize CPU} &       {\scriptsize \#} &        {\scriptsize CPU} &        {\scriptsize \#} &        {\scriptsize CPU} &       {\scriptsize \#} &        {\scriptsize CPU} &       {\scriptsize \#} &        {\scriptsize CPU} &       {\scriptsize \#} &        {\scriptsize CPU} &       {\scriptsize \#} &        {\scriptsize CPU} &       {\scriptsize \#} \\
\hline
    0     & \textbf{A} & 3600  & 0     & 181   & 5     & 3600  & 0     & 916   & 4     & 3600  & 0     & 3291  & 1     & 3600  & 0     & 5     & 5 \\
          & \textbf{B} & 3600  & 0     & 3600  & 0     & 3600  & 0     & 3600  & 0     & 3600  & 0     & 3600  & 0     & 3600  & 0     & 3079  & 1 \\
          & \textbf{C} & 3600  & 0     & 3600  & 0     & 3600  & 0     & 3600  & 0     & 3600  & 0     & 3600  & 0     & 3600  & 0     & 3600  & 0 \\
          & \textbf{D} & 3600  & 0     & 2     & 5     & 3204  & 1     & 17    & 5     & 3600  & 0     & 59    & 5     & 1603  & 3     & 2     & 5 \\
          & \textbf{E} & 3600  & 0     & 890   & 4     & 3600  & 0     & 2024  & 3     & 3600  & 0     & 3377  & 1     & 1882  & 3     & 3     & 5 \\
          & \textbf{F} & 3600  & 0     & 2895  & 1     & 3600  & 0     & 3600  & 0     & 3600  & 0     & 3600  & 0     & 2984  & 1     & 76    & 5 \\
          & \textbf{G} & 841   & 5     & 1     & 5     & 375   & 5     & 1     & 5     & 810   & 5     & 1     & 5     & 571   & 5     & 1     & 5 \\
          & \textbf{H} & 2282  & 2     & 2     & 5     & 3117  & 1     & 2     & 5     & 3600  & 0     & 7     & 5     & 2178  & 2     & 1     & 5 \\
          & \textbf{I} & 2959  & 1     & 1444  & 3     & 3600  & 0     & 1562  & 3     & 3600  & 0     & 939   & 4     & 1804  & 3     & 5     & 5 \\
 \hline
    0.05  & \textbf{A} & 3600  & 0     & 28    & 5     & 3600  & 0     & 343   & 5     & 3600  & 0     & 3291  & 1     & 3156  & 1     & 5     & 5 \\
          & \textbf{B} & 3600  & 0     & 3600  & 0     & 3600  & 0     & 3600  & 0     & 3600  & 0     & 3600  & 0     & 3600  & 0     & 3079  & 1 \\
          & \textbf{C} & 3600  & 0     & 3600  & 0     & 3600  & 0     & 3600  & 0     & 3600  & 0     & 3600  & 0     & 3600  & 0     & 3600  & 0 \\
          & \textbf{D} & 3600  & 0     & 2     & 5     & 3204  & 1     & 3     & 5     & 3600  & 0     & 59    & 5     & 2217  & 2     & 2     & 5 \\
          & \textbf{E} & 3600  & 0     & 110   & 5     & 3600  & 0     & 1923  & 3     & 3600  & 0     & 3377  & 1     & 1793  & 3     & 3     & 5 \\
          & \textbf{F} & 3600  & 0     & 2905  & 1     & 3600  & 0     & 3600  & 0     & 3600  & 0     & 3600  & 0     & 2930  & 1     & 76    & 5 \\
          & \textbf{G} & 841   & 5     & 1     & 5     & 375   & 5     & 1     & 5     & 810   & 5     & 1     & 5     & 62    & 5     & 1     & 5 \\
          & \textbf{H} & 2282  & 2     & 1     & 5     & 3117  & 1     & 2     & 5     & 3600  & 0     & 7     & 5     & 153   & 5     & 1     & 5 \\
          & \textbf{I} & 2959  & 1     & 930   & 4     & 3600  & 0     & 1575  & 3     & 3600  & 0     & 939   & 4     & 2439  & 2     & 5     & 5 \\
\hline
    0.1   & \textbf{A} & 3600  & 0     & 6     & 5     & 3600  & 0     & 23    & 5     & 3600  & 0     & 3291  & 1     & 3156  & 1     & 5     & 5 \\
          & \textbf{B} & 3600  & 0     & 616   & 5     & 3600  & 0     & 1326  & 4     & 3600  & 0     & 3600  & 0     & 3600  & 0     & 3079  & 1 \\
          & \textbf{C} & 3600  & 0     & 3600  & 0     & 3600  & 0     & 3600  & 0     & 3600  & 0     & 3600  & 0     & 3600  & 0     & 3600  & 0 \\
          & \textbf{D} & 3600  & 0     & 1     & 5     & 3204  & 1     & 2     & 5     & 3600  & 0     & 59    & 5     & 2217  & 2     & 2     & 5 \\
          & \textbf{E} & 3600  & 0     & 24    & 5     & 3600  & 0     & 55    & 5     & 3600  & 0     & 3377  & 1     & 1793  & 3     & 3     & 5 \\
          & \textbf{F} & 3600  & 0     & 1277  & 4     & 3600  & 0     & 2227  & 2     & 3600  & 0     & 3600  & 0     & 2930  & 1     & 76    & 5 \\
          & \textbf{G} & 841   & 5     & 1     & 5     & 375   & 5     & 1     & 5     & 810   & 5     & 1     & 5     & 62    & 5     & 1     & 5 \\
          & \textbf{H} & 2282  & 2     & 1     & 5     & 3117  & 1     & 1     & 5     & 3600  & 0     & 7     & 5     & 153   & 5     & 1     & 5 \\
          & \textbf{I} & 2959  & 1     & 1477  & 3     & 3600  & 0     & 736   & 4     & 3600  & 0     & 939   & 4     & 2439  & 2     & 5     & 5 \\
    \hline
\end{tabular}

}
\caption{\label{table: model1} {\footnotesize Comparison of the average CPU and number of {instances} (out of 5) solved to optimality, for \ref{for:BLIFP1} and \ref{for:BLIFP2}}}

\end{table}
}



\begin{landscape}
\begin{multicols}{2}
\begin{center}
\begin{table}[H]
\centering
\begin{footnotesize}
\begin{tabular}{rr|rr|rr|rr|rr|}
           &            & \multicolumn{ 2}{c}{$\alpha=0.05$} & \multicolumn{ 2}{c}{$\alpha=0.1$} & \multicolumn{ 2}{c}{$\alpha=0.5$} & \multicolumn{ 2}{c}{$\alpha=0.9$} \\

  $\mu_0$ &            & {\scriptsize \textbf{LP}} & {\scriptsize Alg. 1} & {\scriptsize \textbf{LP}} & {\scriptsize Alg. 1} & {\scriptsize \textbf{LP}} &{\scriptsize Alg. 1} & {\scriptsize \textbf{LP}} & {\scriptsize Alg. 1}  \\
\hline
0.0 & \textbf{A} & 0.55  & 4.76  & 0.57  & 17.85 & 0.56  & 54.62 & 0.54  & 19.29 \\
          & \textbf{B} & 1.51  & 12.64 & 1.61  & 50.68 & 1.60  & 144.21 & 1.49  & 47.97 \\
          & \textbf{C} & 6.42  & 44.94 & 6.61  & 178.84 & 6.19  & 557.45 & 5.98  & 187.49 \\
          & \textbf{D} & 0.39  & 6.80  & 0.41  & 12.09 & 0.42  & 40.26 & 0.42  & 13.18 \\
          & \textbf{E} & 1.03  & 36.98 & 1.02  & 29.99 & 1.03  & 95.80 & 0.99  & 32.80 \\
          & \textbf{F} & 3.26  & 24.52 & 3.31  & 86.80 & 3.31  & 298.50 & 3.23  & 89.21 \\
          & \textbf{G} & 0.25  & 2.83  & 0.25  & 8.73  & 0.25  & 26.00 & 0.25  & 8.45 \\
          & \textbf{H} & 0.47  & 3.99  & 0.47  & 14.37 & 0.48  & 46.73 & 0.46  & 15.32 \\
          & \textbf{I} & 1.63  & 13.59 & 1.63  & 45.47 & 1.64  & 149.62 & 1.59  & 49.73 \\
\hline
    0.05& \textbf{A} & 0.55  & 4.76  & 0.57  & 17.85 & 0.56  & 54.62 & 0.54  & 19.29 \\
          & \textbf{B} & 1.51  & 12.64 & 1.61  & 50.68 & 1.60  & 144.21 & 1.49  & 47.97 \\
          & \textbf{C} & 6.42  & 44.94 & 6.61  & 178.84 & 6.19  & 557.45 & 5.98  & 187.49 \\
          & \textbf{D} & 0.39  & 6.80  & 0.41  & 12.09 & 0.42  & 40.26 & 0.42  & 13.18 \\
          & \textbf{E} & 1.03  & 36.98 & 1.02  & 29.99 & 1.03  & 95.80 & 0.99  & 32.80 \\
          & \textbf{F} & 3.26  & 24.52 & 3.31  & 86.80 & 3.31  & 298.50 & 3.23  & 89.21 \\
          & \textbf{G} & 0.25  & 2.83  & 0.25  & 8.73  & 0.25  & 26.00 & 0.25  & 8.45 \\
          & \textbf{H} & 0.47  & 3.99  & 0.47  & 14.37 & 0.48  & 46.73 & 0.46  & 15.32 \\
          & \textbf{I} & 1.63  & 13.59 & 1.63  & 45.47 & 1.64  & 149.62 & 1.59  & 49.73 \\
\hline
    0.1 & \textbf{A} & 0.55  & 4.76  & 0.57  & 17.85 & 0.56  & 54.62 & 0.54  & 19.29 \\
          & \textbf{B} & 1.51  & 12.64 & 1.61  & 50.68 & 1.60  & 144.21 & 1.49  & 47.97 \\
          & \textbf{C} & 6.42  & 44.94 & 6.61  & 178.84 & 6.19  & 557.45 & 5.98  & 187.49 \\
          & \textbf{D} & 0.39  & 6.80  & 0.41  & 12.09 & 0.42  & 40.26 & 0.42  & 13.18 \\
          & \textbf{E} & 1.03  & 36.98 & 1.02  & 29.99 & 1.03  & 95.80 & 0.99  & 32.80 \\
          & \textbf{F} & 3.26  & 24.52 & 3.31  & 86.80 & 3.31  & 298.50 & 3.23  & 89.21 \\
          & \textbf{G} & 0.25  & 2.83  & 0.25  & 8.73  & 0.25  & 26.00 & 0.25  & 8.45 \\
          & \textbf{H} & 0.47  & 3.99  & 0.47  & 14.37 & 0.48  & 46.73 & 0.46  & 15.32 \\
          & \textbf{I} & 1.63  & 13.59 & 1.63  & 45.47 & 1.64  & 149.62 & 1.59  & 49.73 \\

\hline

\end{tabular}

\end{footnotesize}
\caption{\label{table: model2} {\footnotesize Comparison of the average CPU for \ref{for:ILBFP_LP} and Algorithm 1}}
\end{table}

\columnbreak

\begin{table}[H]
\centering
\begin{footnotesize}

\begin{tabular}{rr|rr|rr|rr|rr|}

&            & \multicolumn{ 2}{c}{$\alpha=0.05$} & \multicolumn{ 2}{c}{$\alpha=0.1$} & \multicolumn{ 2}{c}{$\alpha=0.5$} & \multicolumn{ 2}{c}{$\alpha=0.9$} \\
  $\mu_0$ &            & {\tiny \ref{for:CoopP}} &       Ben. & {\tiny \ref{for:CoopP}} &       Ben. & {\tiny \ref{for:CoopP}} &       Ben. & {\tiny \ref{for:CoopP}} &       Ben.  \\
\hline
  0     & \textbf{A} & 0.55  & 4.76  & 0.57  & 17.85 & 0.56  & 54.62 & 0.54  & 19.29 \\
          & \textbf{B} & 1.51  & 12.64 & 1.61  & 50.68 & 1.60  & 144.21 & 1.49  & 47.97 \\
          & \textbf{C} & 6.42  & 44.94 & 6.61  & 178.84 & 6.19  & 557.45 & 5.98  & 187.49 \\
          & \textbf{D} & 0.39  & 6.80  & 0.41  & 12.09 & 0.42  & 40.26 & 0.42  & 13.18 \\
          & \textbf{E} & 1.03  & 36.98 & 1.02  & 29.99 & 1.03  & 95.80 & 0.99  & 32.80 \\
          & \textbf{F} & 3.26  & 24.52 & 3.31  & 86.80 & 3.31  & 298.50 & 3.23  & 89.21 \\
          & \textbf{G} & 0.25  & 2.83  & 0.25  & 8.73  & 0.25  & 26.00 & 0.25  & 8.45 \\
          & \textbf{H} & 0.47  & 3.99  & 0.47  & 14.37 & 0.48  & 46.73 & 0.46  & 15.32 \\
          & \textbf{I} & 1.63  & 13.59 & 1.63  & 45.47 & 1.64  & 149.62 & 1.59  & 49.73 \\
\hline
    0.05  & \textbf{A} & 0.55  & 4.76  & 0.57  & 17.85 & 0.56  & 54.62 & 0.54  & 19.29 \\
          & \textbf{B} & 1.51  & 12.64 & 1.61  & 50.68 & 1.60  & 144.21 & 1.49  & 47.97 \\
          & \textbf{C} & 6.42  & 44.94 & 6.61  & 178.84 & 6.19  & 557.45 & 5.98  & 187.49 \\
          & \textbf{D} & 0.39  & 6.80  & 0.41  & 12.09 & 0.42  & 40.26 & 0.42  & 13.18 \\
          & \textbf{E} & 1.03  & 36.98 & 1.02  & 29.99 & 1.03  & 95.80 & 0.99  & 32.80 \\
          & \textbf{F} & 3.26  & 24.52 & 3.31  & 86.80 & 3.31  & 298.50 & 3.23  & 89.21 \\
          & \textbf{G} & 0.25  & 2.83  & 0.25  & 8.73  & 0.25  & 26.00 & 0.25  & 8.45 \\
          & \textbf{H} & 0.47  & 3.99  & 0.47  & 14.37 & 0.48  & 46.73 & 0.46  & 15.32 \\
          & \textbf{I} & 1.63  & 13.59 & 1.63  & 45.47 & 1.64  & 149.62 & 1.59  & 49.73 \\
\hline
    0.1   & \textbf{A} & 0.55  & 4.76  & 0.57  & 17.85 & 0.56  & 54.62 & 0.54  & 19.29 \\
          & \textbf{B} & 1.51  & 12.64 & 1.61  & 50.68 & 1.60  & 144.21 & 1.49  & 47.97 \\
          & \textbf{C} & 6.42  & 44.94 & 6.61  & 178.84 & 6.19  & 557.45 & 5.98  & 187.49 \\
          & \textbf{D} & 0.39  & 6.80  & 0.41  & 12.09 & 0.42  & 40.26 & 0.42  & 13.18 \\
          & \textbf{E} & 1.03  & 36.98 & 1.02  & 29.99 & 1.03  & 95.80 & 0.99  & 32.80 \\
          & \textbf{F} & 3.26  & 24.52 & 3.31  & 86.80 & 3.31  & 298.50 & 3.23  & 89.21 \\
          & \textbf{G} & 0.25  & 2.83  & 0.25  & 8.73  & 0.25  & 26.00 & 0.25  & 8.45 \\
          & \textbf{H} & 0.47  & 3.99  & 0.47  & 14.37 & 0.48  & 46.73 & 0.46  & 15.32 \\
          & \textbf{I} & 1.63  & 13.59 & 1.63  & 45.47 & 1.64  & 149.62 & 1.59  & 49.73 \\

\hline
\end{tabular}
\end{footnotesize}
\caption{\label{table: model3} {\footnotesize Comparison of the average CPU for  \ref{for:CoopP} and Benders Algorithm}}
\end{table}
\end{center}

\end{multicols}
\end{landscape}

\subsection{Comparing solutions and risk profiles within {problems}}

This section analyzes the results provided by the {two hierarchical} {problems} in terms of broker-dealer's net profit and risk and expected return attained by the investor.

{Figure \ref{Grap:CVaR_M1} compares the CVaR values obtained for the different risk profiles for \textbf{BLIFP}. Each piecewise curve reports the CVaR values for different $\alpha$-levels and $\mu_0$-levels and the nine markets profiles ($A,\ldots,I$). We observe that the CVaR always increases with the value of $\alpha$, since this implies assuming more risk. It can also be seen in these figures that, when the value of $\alpha$ increases, the CVaR for the different values of $\mu_0$ becomes closer for each value of $\alpha$. This can be explained because when $\alpha=1$, if the constraint that the expected return must be greater or equal to $0$ is satisfied, both problems become the same, then, the bigger the $\alpha$ the more similar the results for different values of $\mu_0$. Furthermore, for a given $\alpha$, the CVaR for smaller $\mu_0$ is higher because in these cases the constraint on the expected return enlarges the feasible region as compared with higher values of $\mu_0$.}

\begin{figure}[H]
\centering

\fbox{\includegraphics[scale=0.7]{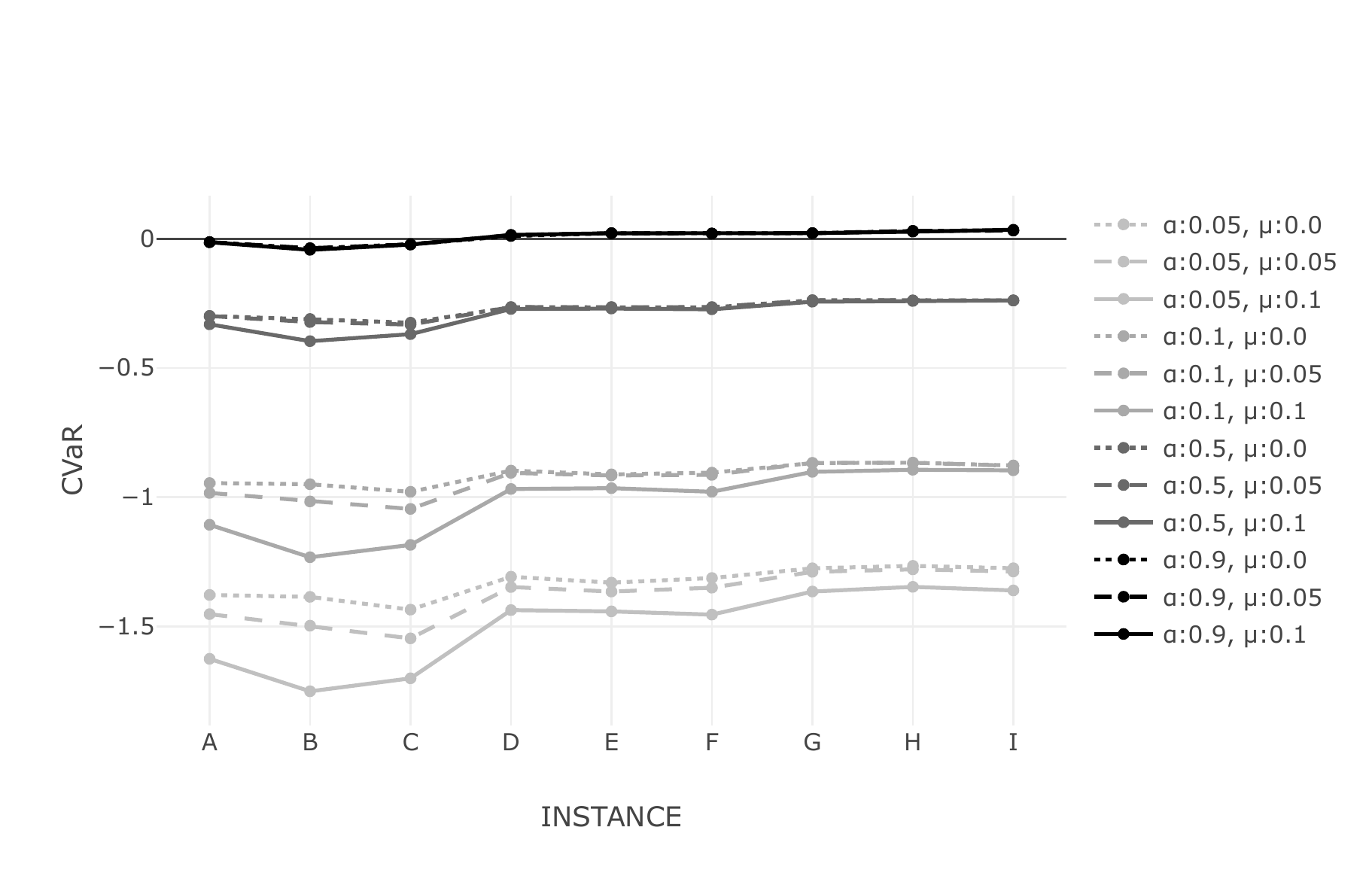}}

\caption{{\footnotesize Values of the CVaR for \textbf{BLIFP}, for different $\alpha$ and $\mu_0$ levels}}\label{Grap:CVaR_M1}
\end{figure}

{Figure \ref{Grap:BankProfit_M1} compares, with a similar organization as Figure \ref{Grap:CVaR_M1}, the broker-dealer net profit for different investor's risk profiles. Analogously, Figure \ref{Grap:ExpectedReturn_M1} represents the expected return for the investor.

We observe in Figure  \ref{Grap:BankProfit_M1} that the results of the broker-dealer net profit are bigger, in trend, for profiles with smaller values of $\alpha$, that is, for more risk-averse investments. In addition, we also show in Figure \ref{Grap:ExpectedReturn_M1} that, in general, bigger expected returns are obtained for higher values of $\alpha$. The reason for this is that by increasing $\alpha$ one is considering a wider range of values to compute the CVaR, and then the result is a  value closer to the expected return (note that when $\alpha=1$ the expected return is equal to the CVaR).}

\begin{figure}[H]
\centering

\fbox{\includegraphics[scale=0.7]{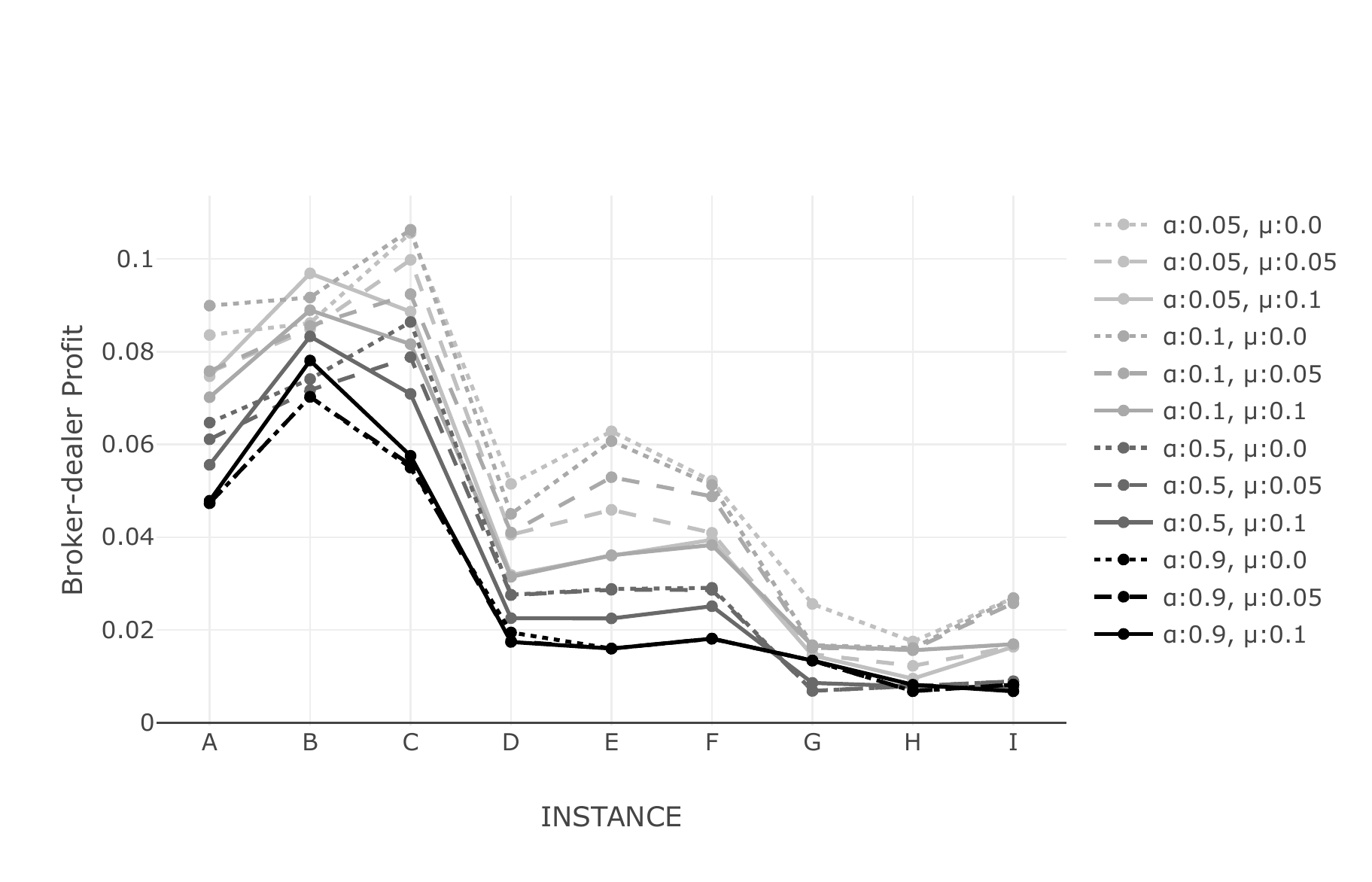}}

\caption{{\footnotesize Values of the broker-dealer profit for \textbf{BLIFP}, for different values $\alpha$ and $\mu_0$ levels}}\label{Grap:BankProfit_M1}
\end{figure}

\begin{figure}[H]
\centering

\fbox{\includegraphics[scale=0.7]{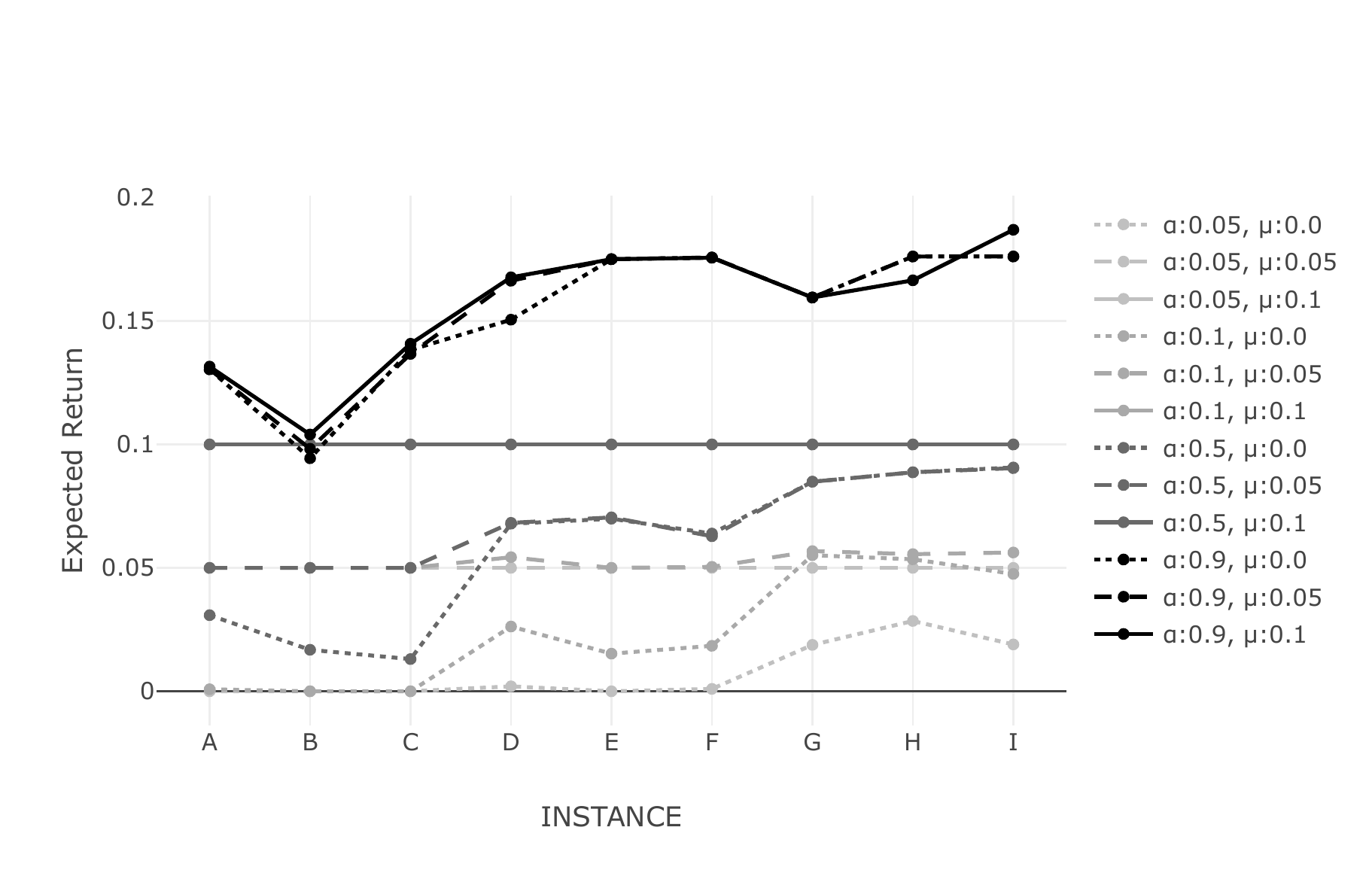}}

\caption{{\footnotesize Values of the expected return for \textbf{BLIFP}, for different $\alpha$ and $\mu_0$ levels}}\label{Grap:ExpectedReturn_M1}
\end{figure}

Finally, to conclude with the analysis of \textbf{BLIFP}, we remark that the smaller the cardinality of the set $B$ the better the CVaR and expected returns for the investor, but the worse the broker-dealer net profit. This is expected since we are reducing the number of securities where the broker-dealer could charge transaction costs.

{We proceed next to analyze the solutions of the second {problem}, namely \textbf{ILBFP}. We observe in Figure \ref{Grap:BankProfit_M2} the same trend  that in the previous model: more risk-averse investments produce lower CVaR for the investor (left-upper figure), and bigger profits for the broker-dealer (right-upper figure), and decreasing the cardinality of the set $B$ results in a reduction of the broker-dealer profit. The behavior of expected return (lower figure) is similar to those observed in Figure \ref{Grap:ExpectedReturn_M1} for the corresponding \textbf{BLIFP}. }

\vspace*{-0.08cm}

\begin{figure}[H]
\centering

\begin{multicols}{2}

\fbox{\includegraphics[scale=0.45]{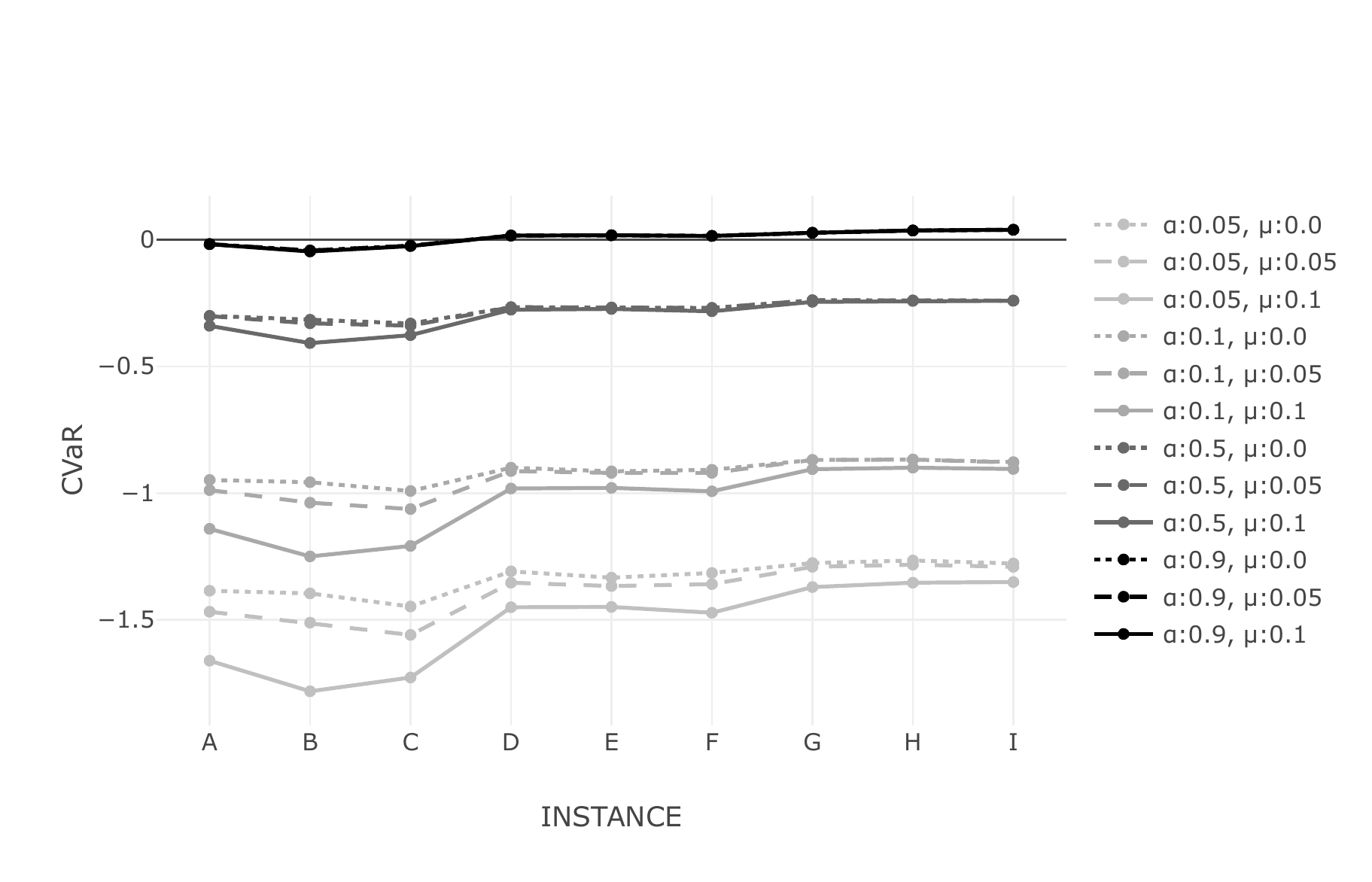}}

 \columnbreak

\fbox{\includegraphics[scale=0.45]{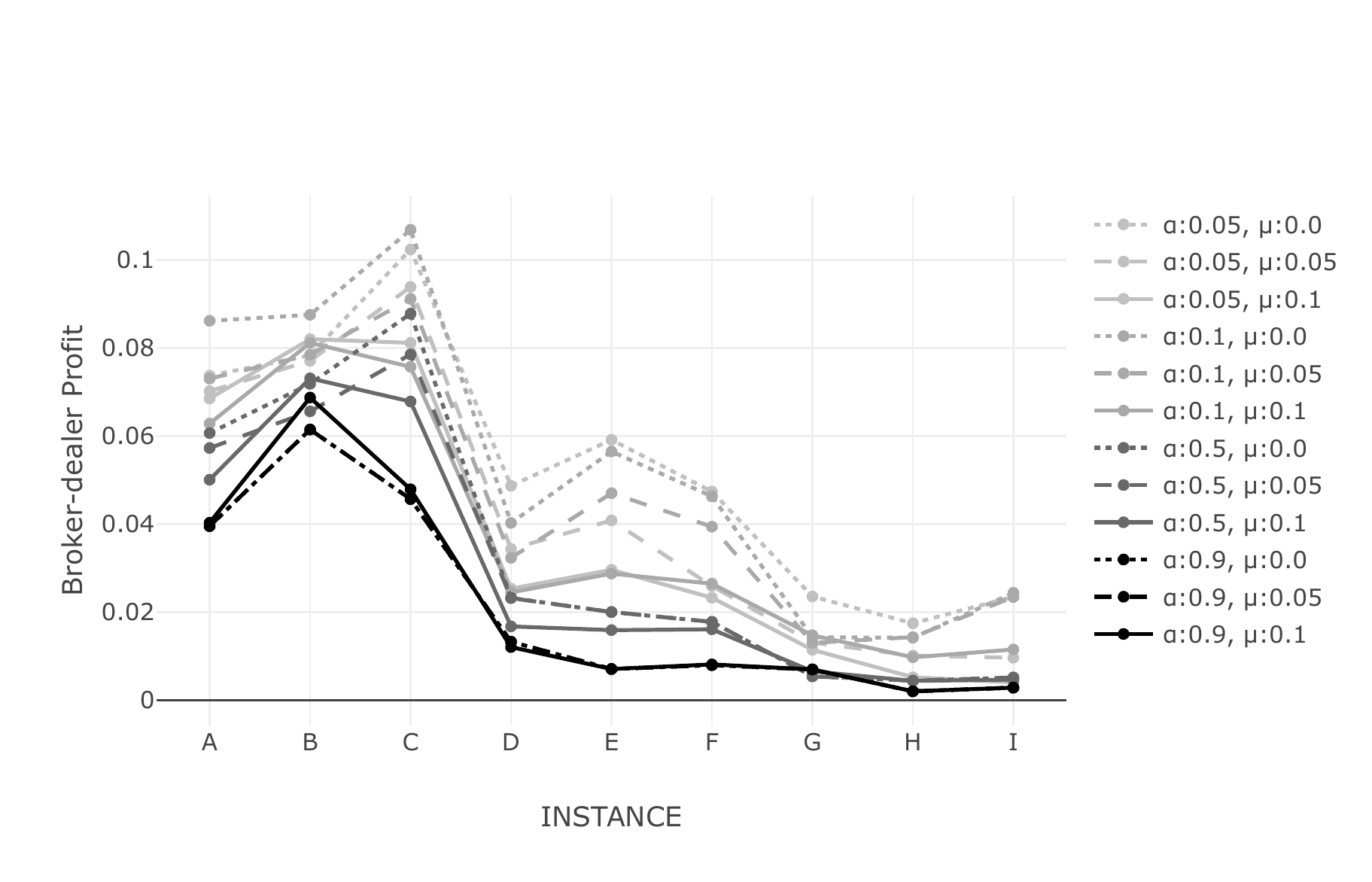}}

\end{multicols}

\begin{center}
\fbox{\includegraphics[scale=0.45]{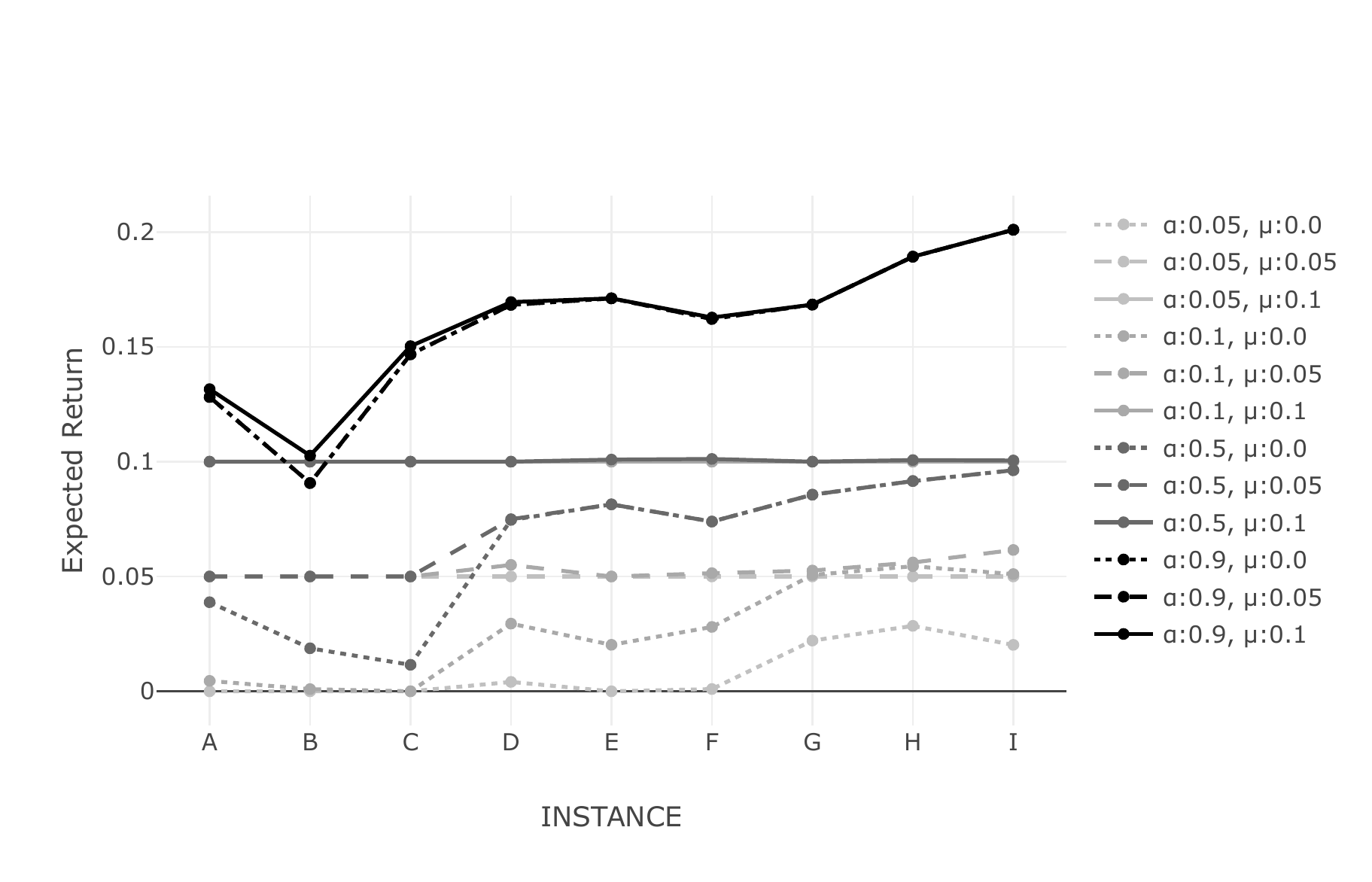}}
\end{center}


\caption{{\footnotesize {Values of the CVaR (left-above), broker-dealer profit (right-above) and expected return (below) for \textbf{ILBFP}, for different $\alpha$ and $\mu_0$ levels}}}\label{Grap:BankProfit_M2}
\end{figure}


{To finish this section, we consider the \textbf{MSWP} model. In this case, we have also included in our analysis the comparison of the objective function of this {problem, namely the broker-dealer net profit plus CVaR,} for the different risk profiles with respect to $\mu_0$ and $\alpha$, and type of market ($A,\ldots,I$). It can be seen in {the upper-right frame of} Figure \ref{Grap:Sum_M3}, that the objective value increases with the value of $\alpha$. The same {trend} is observed for the CVaR and the expected return (left figures). However, regarding the broker-dealer profit {we could not detect a clear pattern.}

The interested reader is referred {to the appendix,  that includes all comparisons and graphical outputs gathered in our study. Furthermore, one can find there a discrete Pareto front of \textbf{MSWP}} for different values of the parameter $\xi$.}

\begin{figure}[H]
\centering

\begin{multicols}{2}

\fbox{\includegraphics[scale=0.45]{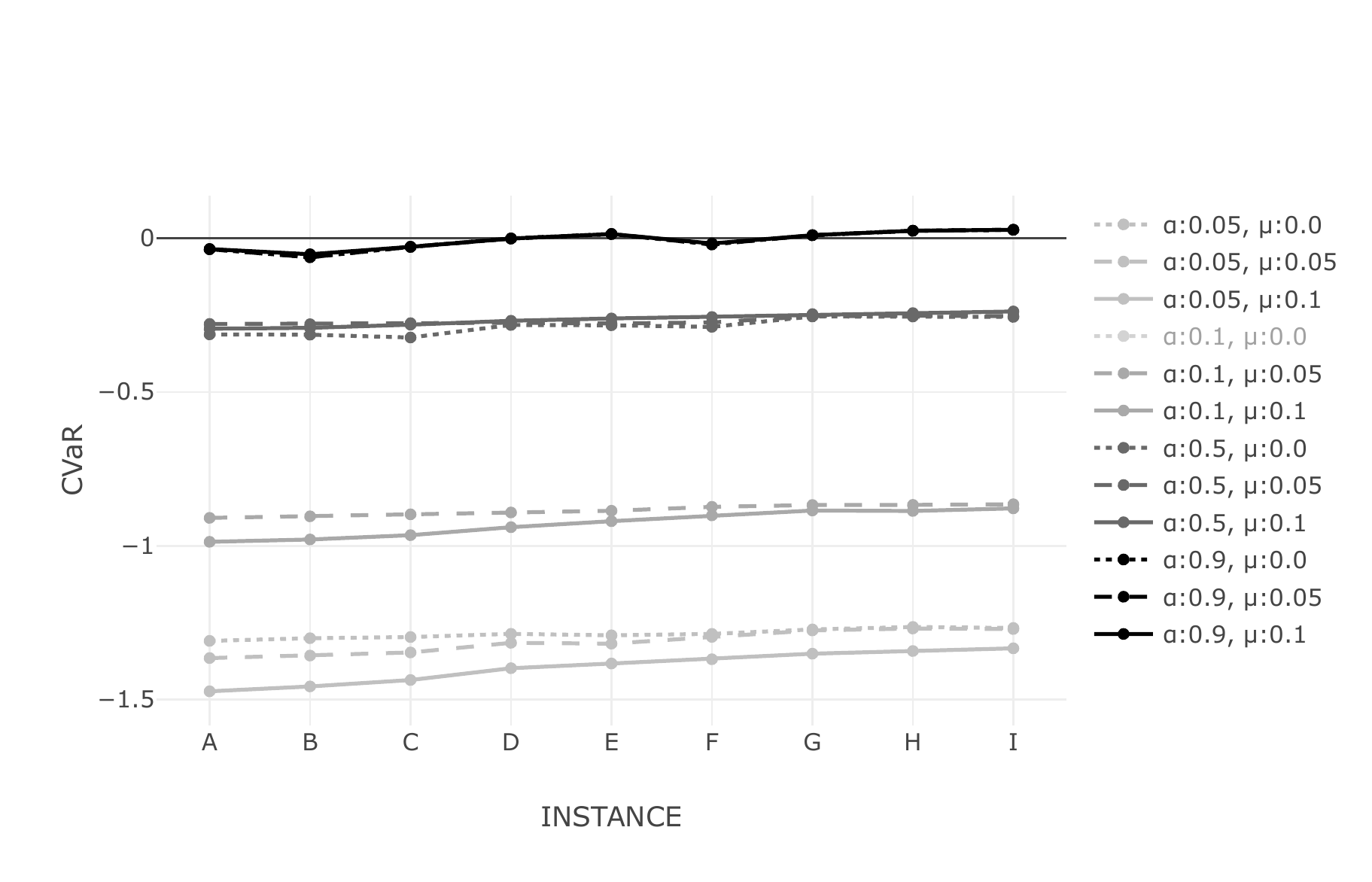}}

 \columnbreak

\fbox{\includegraphics[scale=0.45]{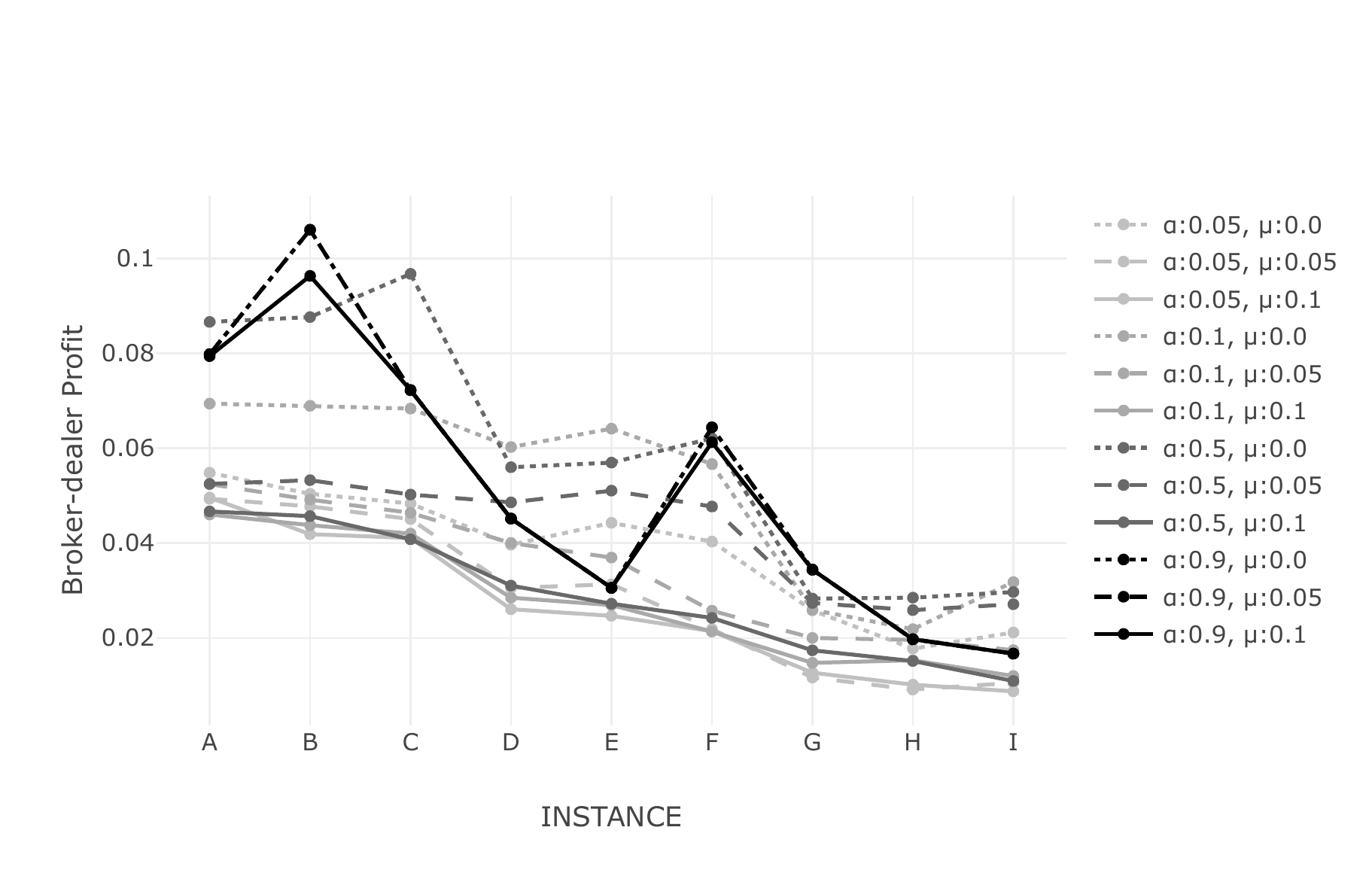}}

\end{multicols}

\begin{multicols}{2}

\begin{center}
\fbox{\includegraphics[scale=0.45]{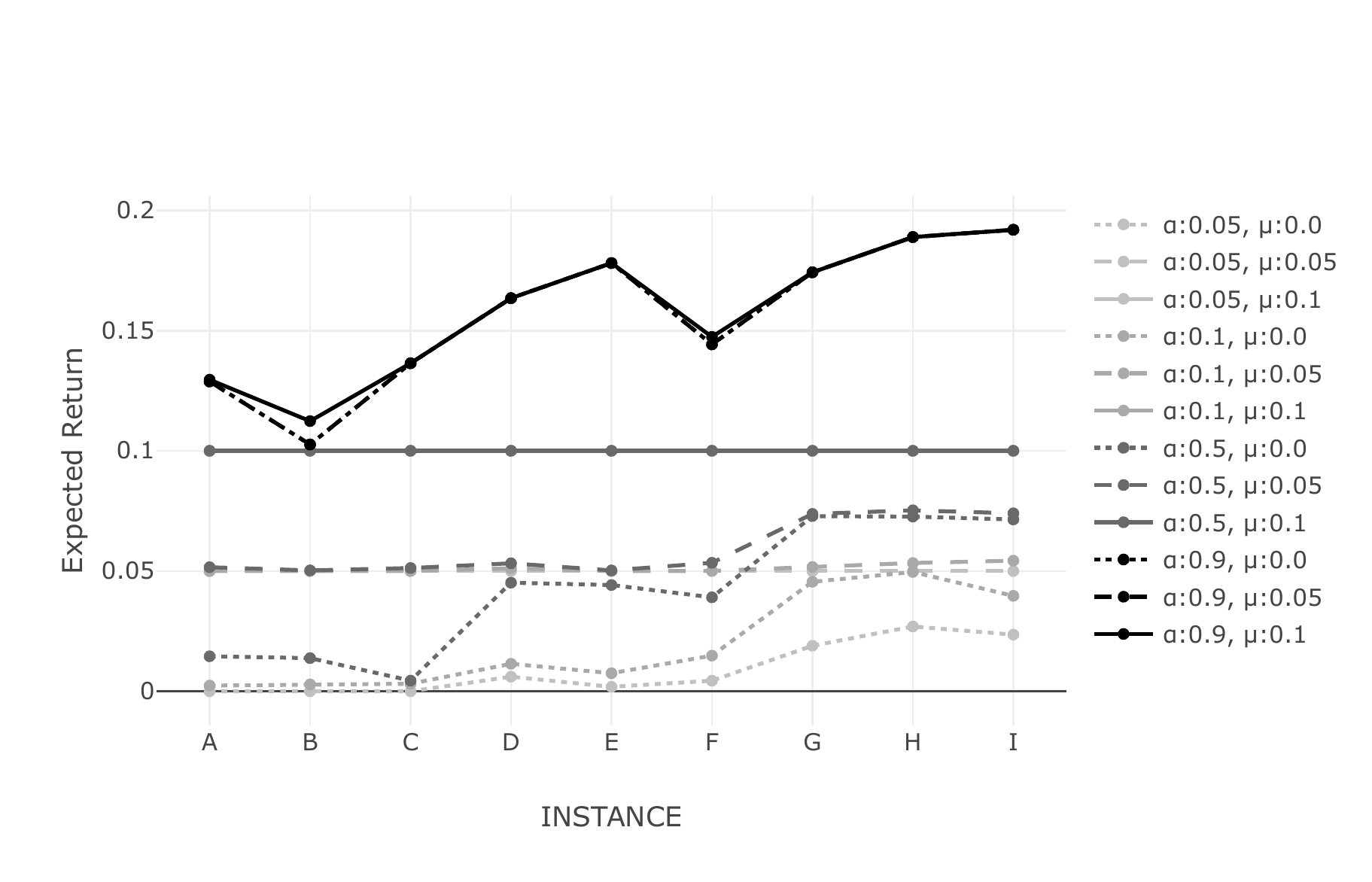}}
\end{center}

\begin{center}
\fbox{\includegraphics[scale=0.34]{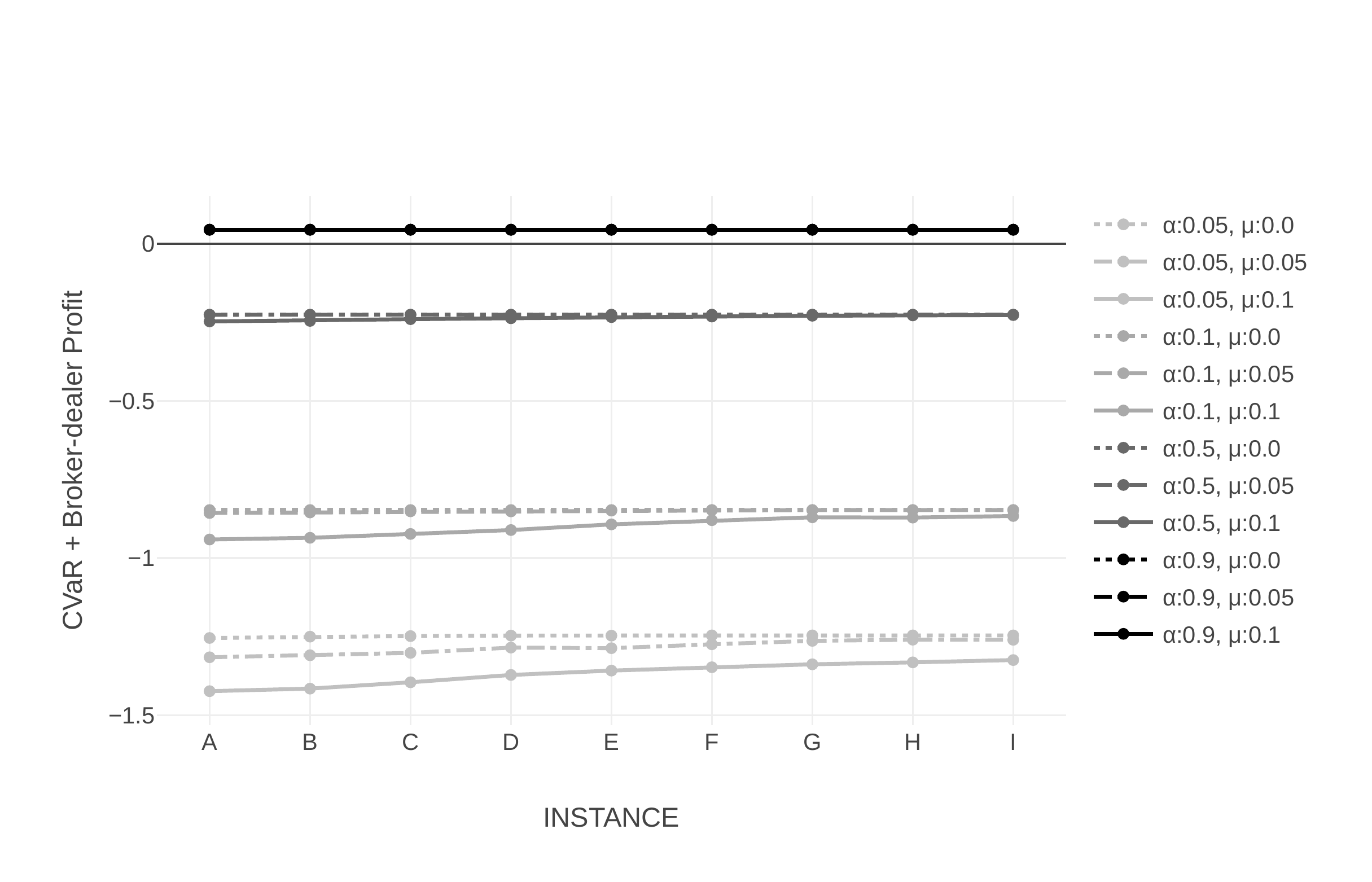}}
\end{center}

\end{multicols}
\caption{{\footnotesize {Values of the CVaR (upper-left), broker-dealer profit (upper-right), expected return (lower-left) and objective value (lower-right) for different $\alpha$ and $\mu_0$ levels in \textbf{MSWP},}}}\label{Grap:Sum_M3}
\end{figure}

%
%
%
%
%

\subsection{Comparing solutions across problems}

This last section of the computational results is devoted to comparing the solutions provided for the three problems considered in this paper, namely \textbf{BLIFP}, \textbf{ILBFP} and \textbf{MSWP}. The goal is to analyze the solution across {problems} with respect to the goals of the two parties: broker-dealer net profit, CVaR levels and expected returns. Due to page length limitations in the paper version, we have included in our figures only some comparisons for certain risk profiles. The interested reader is referred again to the appendix, where we report comparisons for a broader range of risk profiles.

\vspace{10mm}

\begin{figure}[H]
\centering

\begin{multicols}{2}



\fbox{\includegraphics[scale=0.45]{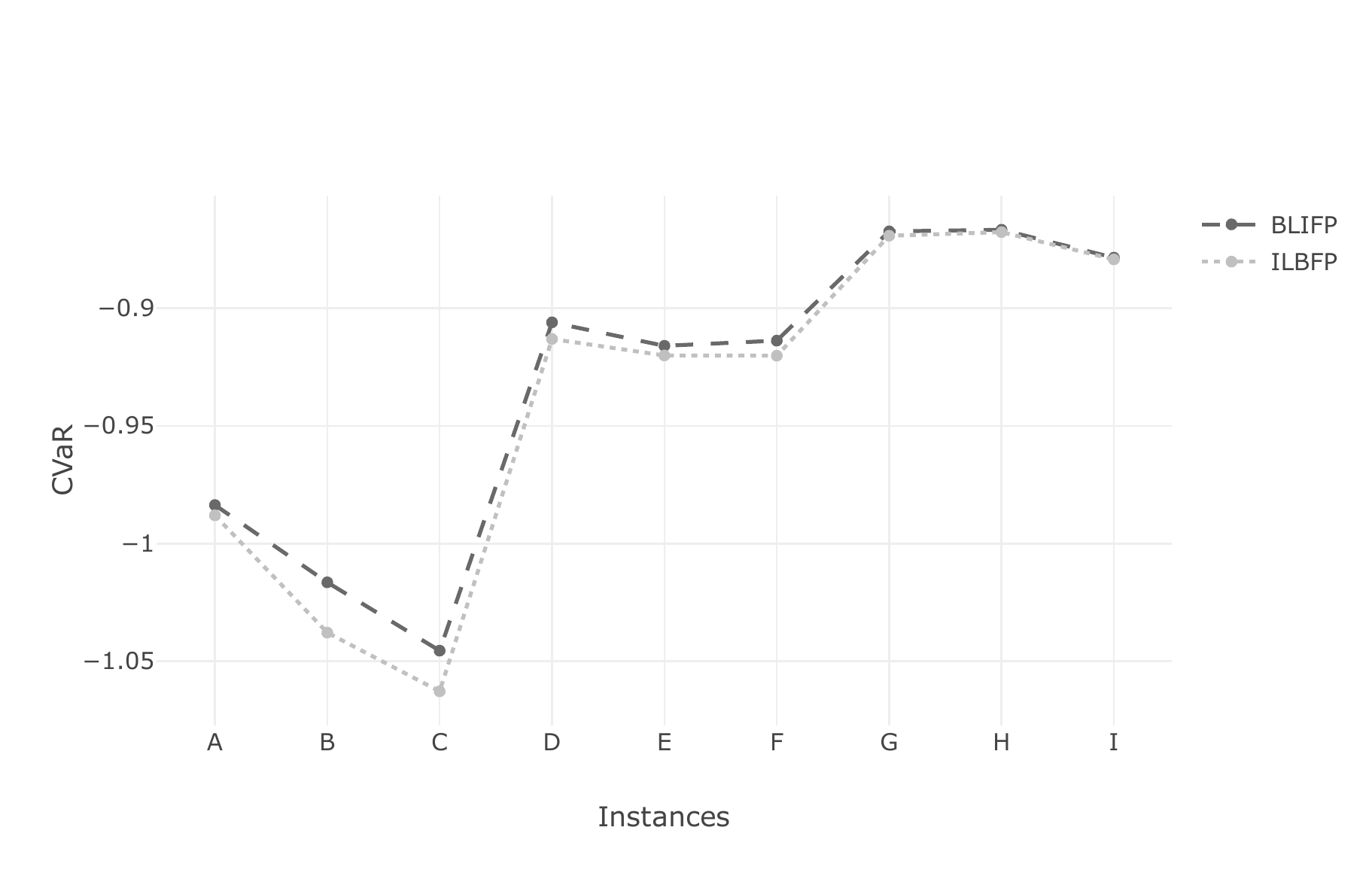}}

\fbox{\includegraphics[scale=0.45]{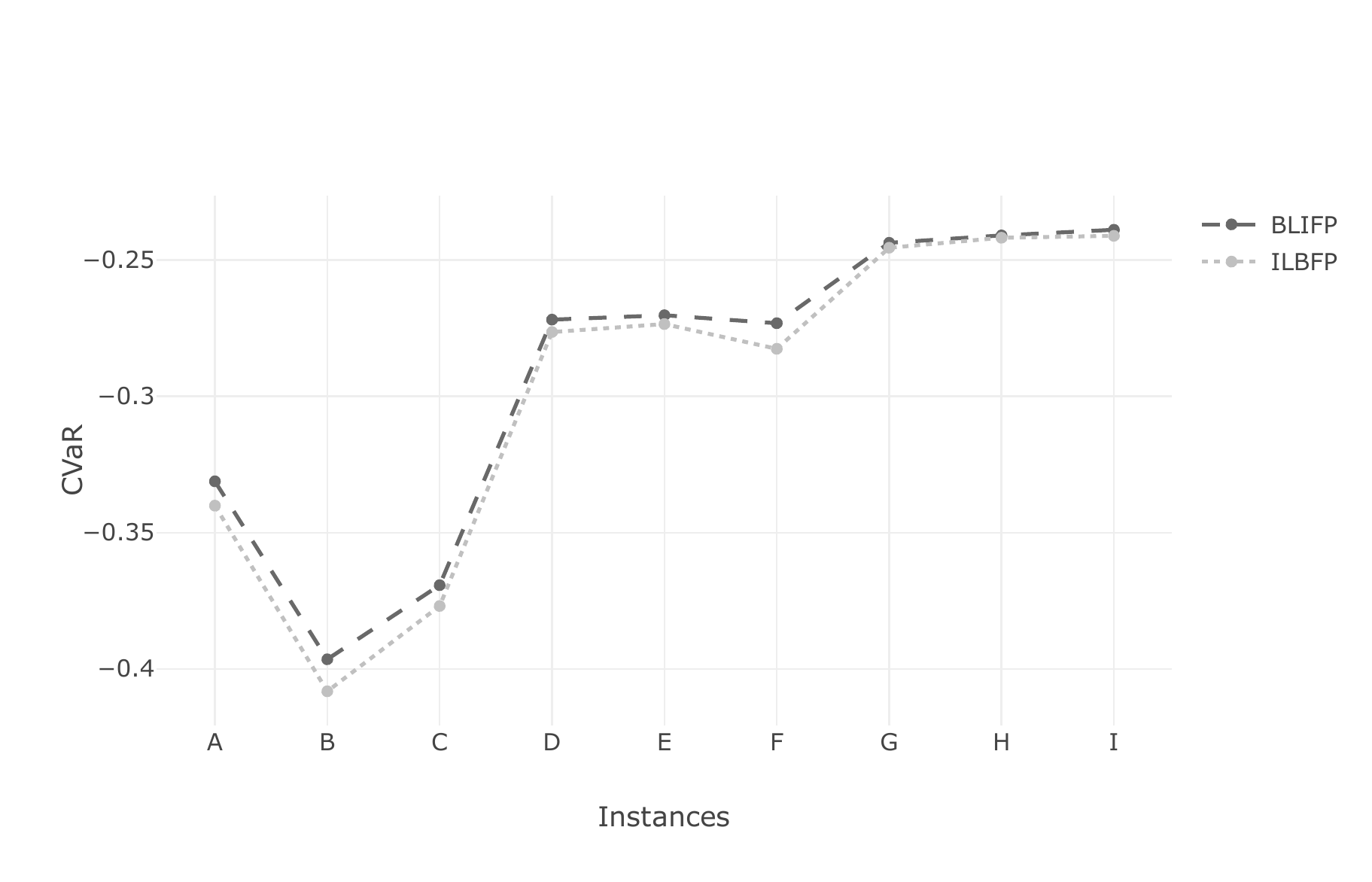}}

\end{multicols}
\caption{ {\footnotesize {Values of the CVaR for {\textbf{BLIFP}} and {\textbf{ILBFP}} for $\alpha=0.1$ and $\mu_0=0.05$ (left) and for $\alpha=0.5$ and $\mu_0=0.1$ (right)}}}\label{Grap:CVaR_compare}
\end{figure}

Figure \ref{Grap:CVaR_compare} shows a comparison of the CVaR values attained in \textbf{BLIFP} and \textbf{ILBFP} for different risk profiles {($\alpha=0.1$ and $\mu_0=0.05$  and $\alpha=0.5$ and $\mu_0=0.1$, in the right and left figures, respectively)}. We can observe in Figure \ref{Grap:CVaR_compare} that for each risk profile, the CVaR values are always higher in \textbf{BLIFP} than in \textbf{ILBFP}. Analogously, Figure \ref{Grap:BankProfit_compare} compares the values of the broker-dealer profit for the two hierarchical problems. It is also remarkable that \textbf{BLIFP} always results in higher profit values for each risk profile and all type of instances. In these comparisons, we do not include the values for the social welfare problem because they are not comparable due to the existence of multiple solutions (with the same value for the objective function but a very different balance between the distribution of the CVaR and the broker-dealer profit). As we mentioned above, we emphasize that in all our experiments, \textbf{BLIFP} always gives higher profit for the broker-dealer and better CVaR for the investor than \textbf{ILBFP}. {In this regard,  it seems beneficial for the two parties to accept that the investor knows the transaction costs on the securities before setting his portfolio.}

\begin{figure}[H]
\centering

\begin{multicols}{2}



\fbox{\includegraphics[scale=0.45]{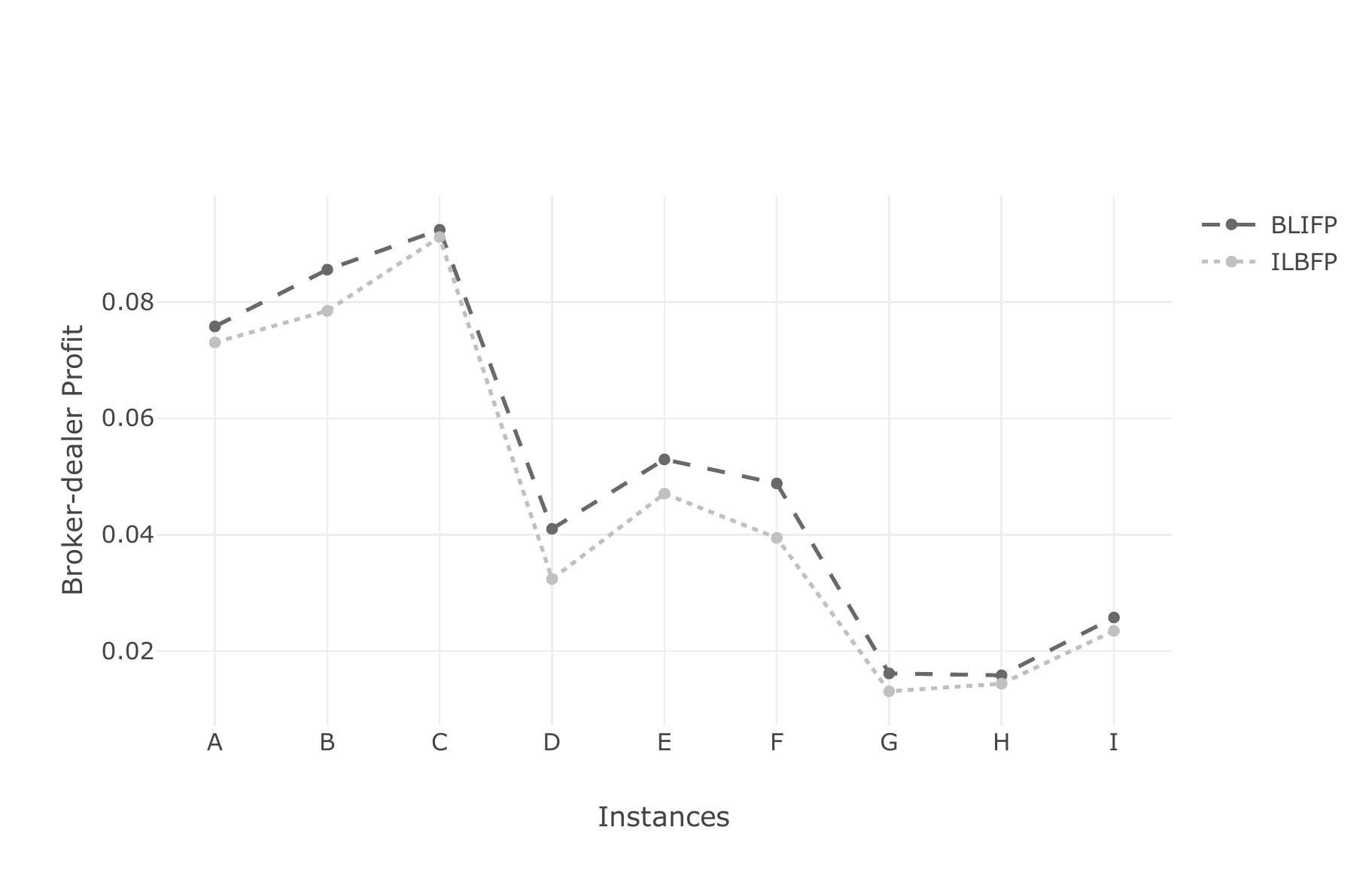}}

\fbox{\includegraphics[scale=0.45]{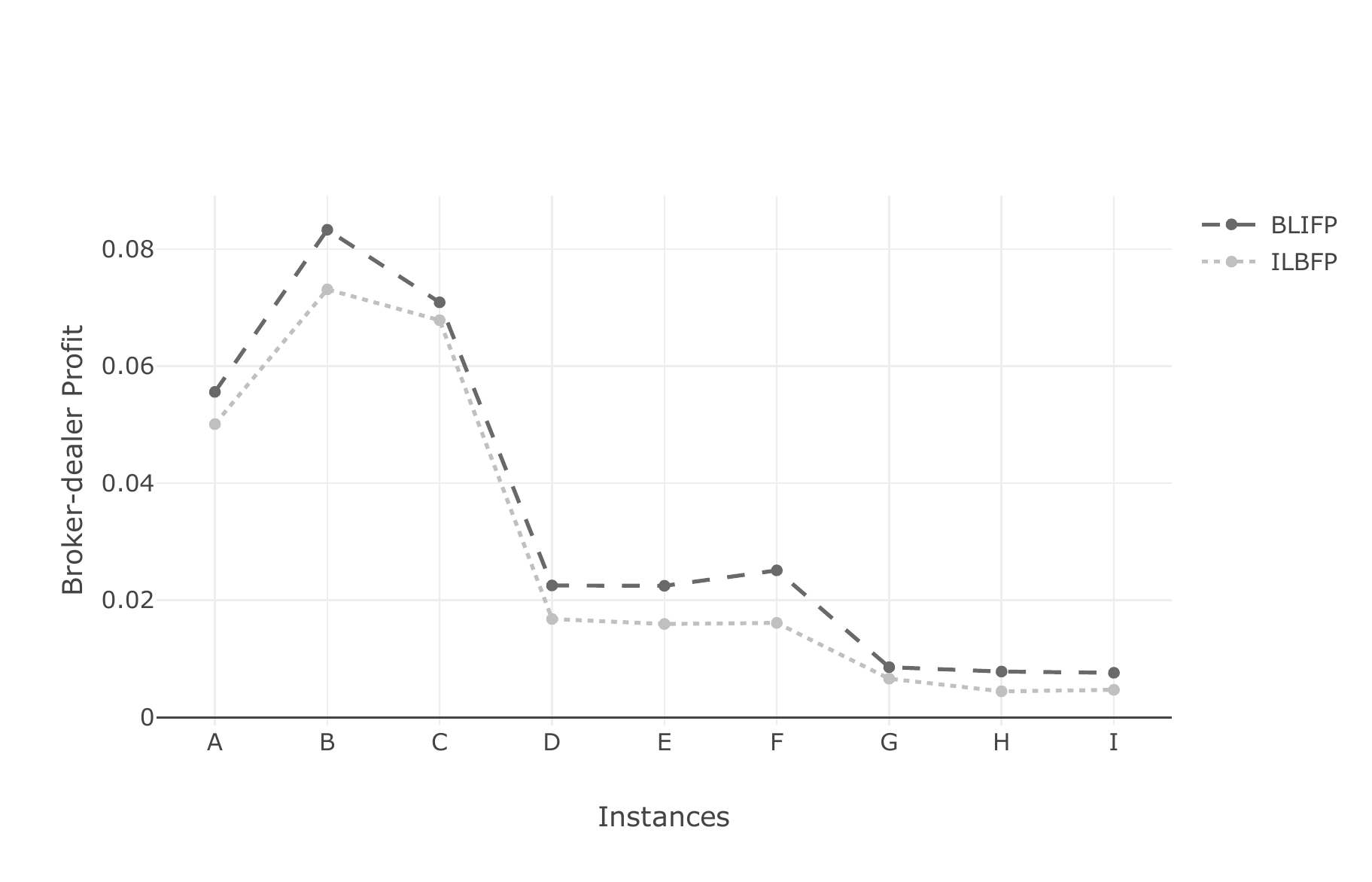}}

\end{multicols}
\caption{ {\footnotesize {Values of the broker-dealer profit for  {\textbf{BLIFP}} and {\textbf{ILBFP}},  for $\alpha=0.1$ and $\mu_0=0.05$ (left) and for $\alpha=0.5$ and $\mu_0=0.1$ (right)}}}\label{Grap:BankProfit_compare}
\end{figure}

The last comparisons across models refer to the value of the sum of {broker-dealer profit plus the CVaR of the investor}, in Figure \ref{Grap:Sum_compare}, and the expected return value, in Figure \ref{Grap:ExpectedReturn_compare}. These two figures show the corresponding values attained by the three proposed problems, \textbf{BLIFP}, \textbf{ILBFP} and \textbf{MSWP}, for the different instances ($A,\ldots,I$) and two different risk profiles (see figures captions). As theoretically proved in Proposition \ref{prop:social_welfare}, we can observe in Figure \ref{Grap:Sum_compare} that the value of the sum of the broker-dealer profit plus the CVaR {of the investor} is always greater for the social welfare model (\textbf{MSWP}) than for the other two hierarchical problems, namely \textbf{BLIFP} and \textbf{ILBFP}. Finally, we compare {the obtained expected return values for the three problems}. From Figure \ref{Grap:ExpectedReturn_compare}, we can not conclude any dominating relationship among the problems with respect to the expected return value and therefore, the numerical experiments do not prescribe any preference relationship regarding the expected return.

\begin{figure}[H]
\centering

\begin{multicols}{2}



\fbox{\includegraphics[scale=0.45]{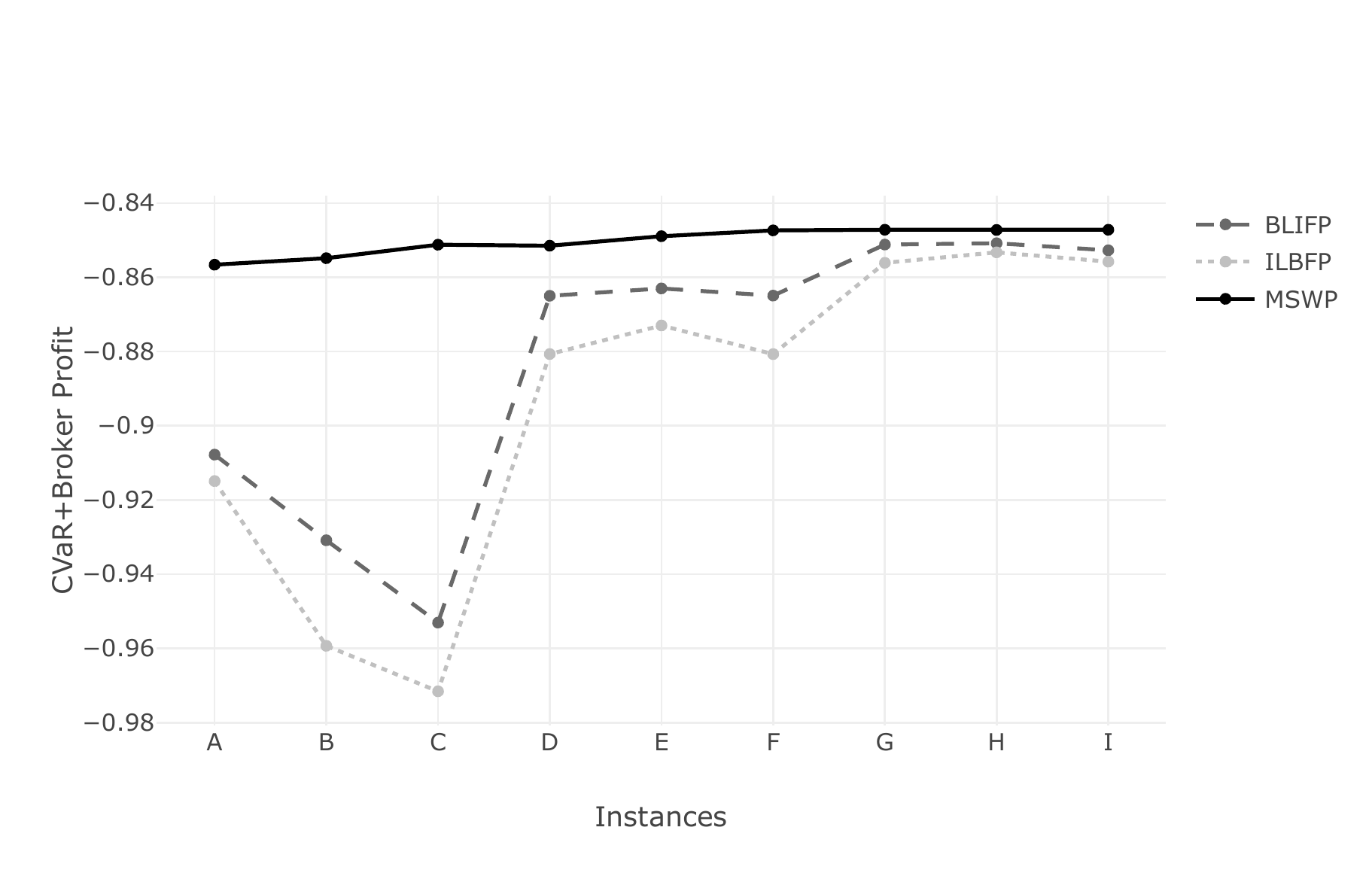}}

\fbox{\includegraphics[scale=0.45]{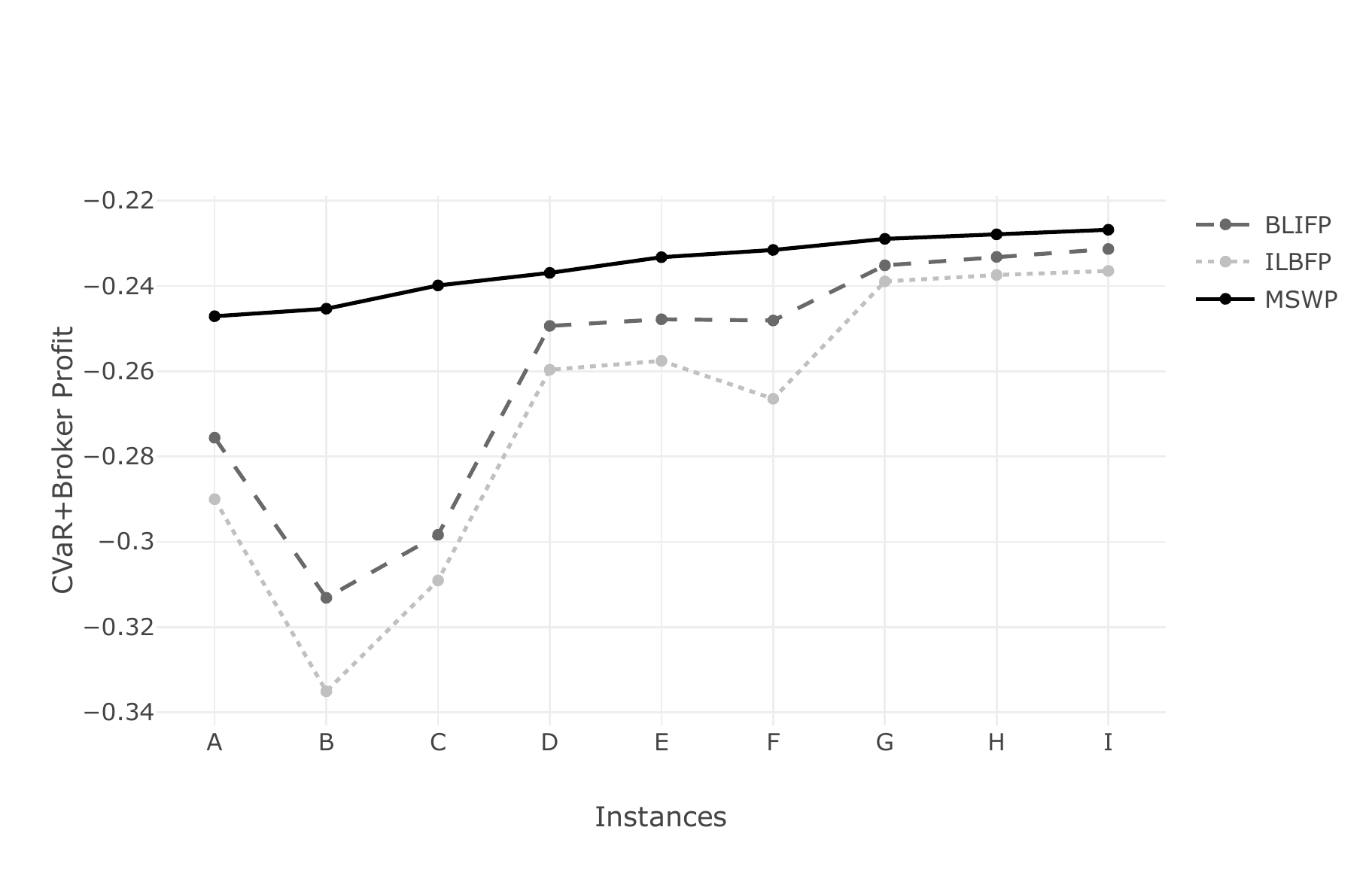}}

\end{multicols}
\caption{ {\footnotesize {Values of the broker-dealer profit + CVaR for the three {problems}, for $\alpha=0.1$ and $\mu_0=0.05$ (left) and for $\alpha=0.5$ and $\mu_0=0.1$ (right)}}}\label{Grap:Sum_compare}
\end{figure}

\begin{figure}[H]
\centering

\begin{multicols}{2}

\fbox{\includegraphics[scale=0.45]{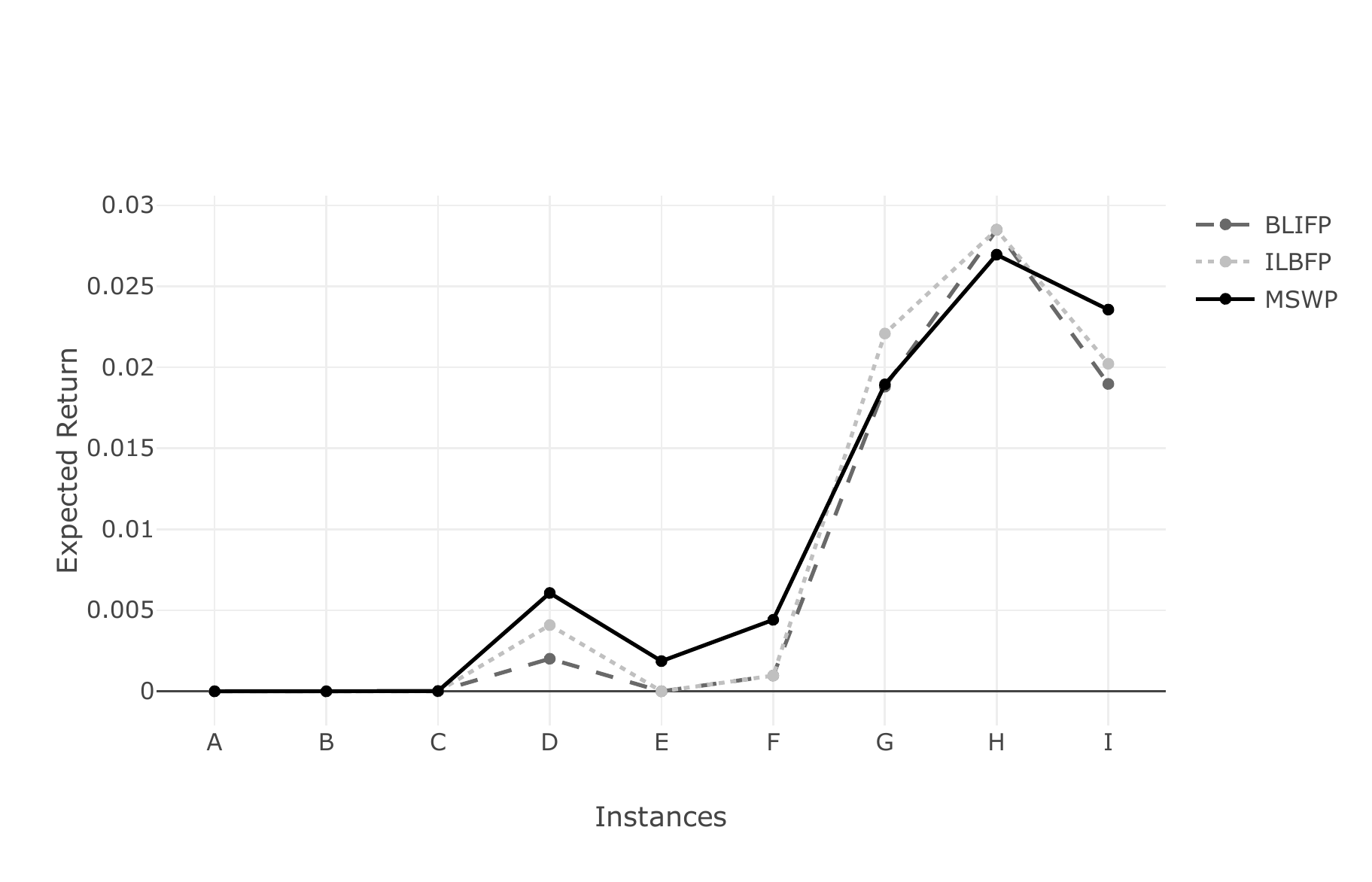}}

 \columnbreak

\fbox{\includegraphics[scale=0.45]{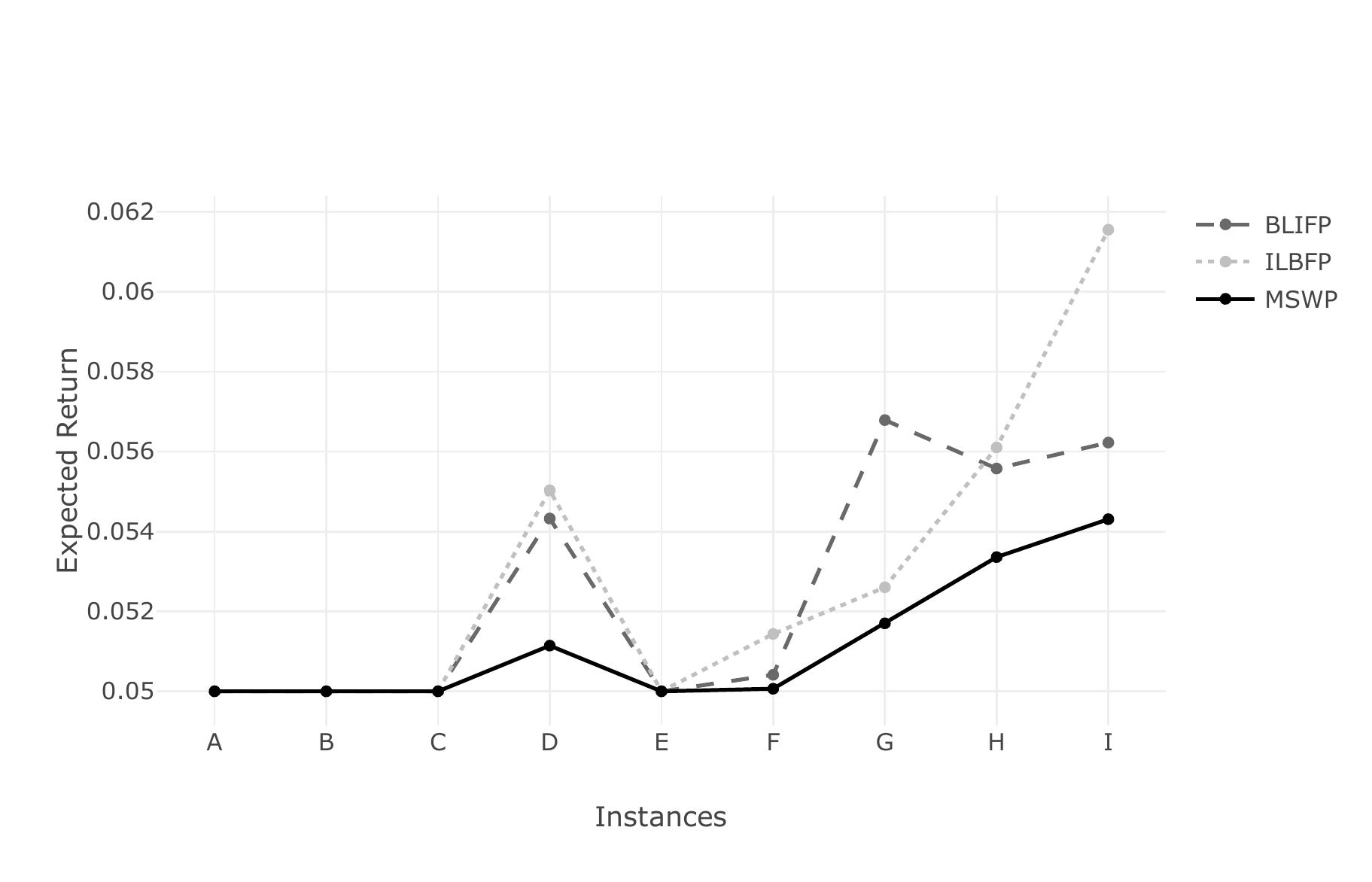}}


\end{multicols}
\caption{ {\footnotesize {Values of the expected return for the three {problems}, for $\alpha=0.05$ and $\mu_0=0.0$,  (left) and for $\alpha=0.1$ and $\mu_0=0.05$ (right)}}}\label{Grap:ExpectedReturn_compare}
\end{figure}

\section{Concluding remarks and extensions}\label{sec:Conclusions}

We have presented {three} single-period portfolio optimization problem{s} with transaction costs, considering two different decision-makers: the investor and the financial intermediary. Including the financial intermediaries {(broker-dealers)} as decision-makers leads to the incorporation of the transaction costs as decision variables in the portfolio selection problem. The action of both decision-makers was assumed to be hierarchical. We have considered the situations where each {of these decision-makers is leader and have analyzed them.} This hierarchical structure has been modeled using bilevel optimization. In addition, a social welfare model has also been studied.

In all cases, it has been assumed that the broker-dealer has to choose the unit transaction costs, for each security, from a discrete set of {possible costs}, maximizing its benefits, and that the investor aims to minimize the risk (optimizing his CVaR), ensuring a given expected return. Considering continuous sets of possible {values for the transaction costs} could be an interesting future research line.

In the considered models we assumed proportional transactions cost; however, other transaction costs structures such as fixed transaction costs or convex piecewise linear costs have been considered in the literature (for further details on transaction costs structures we refer the reader to \cite{man15_2}). These costs structures could be incorporated in our models by slightly modifying the resolution methods  and increasing the complexity of problem-solving. For instance, in order to incorporate fixed fees and commissions, we should include some binary variables determining whether the investor chooses a security or not, and then accounting for its contribution to the transaction costs. The general tools from MILP can be adapted to solve the problem with this new structure of costs. This could be another interesting future research line.

In order to solve the three proposed {problems}, MILP and LP formulations, as well as algorithms, have been proposed. By making variations in the set of {costs}, and in the parameters to model the CVaR and the expected return, $\alpha$ and $\mu_0$, different broker-dealer and investor profiles can be considered.

In our analysis in Sections \ref{Sect:Bank-leader} and \ref{Investor-leader}, all the problems have been presented, for simplicity, with only one follower. Nevertheless, they could be easily extended to more than one. In particular, in Section \ref{Sect:Bank-leader}, the problem has been studied from the broker-dealer point of view, that is, the broker-dealer aims to maximize its benefit by assuming that once the {costs} for the securities are set, a single investor will choose his portfolio according to the described goals. We remark that the same procedure could be applied to several followers (investors). In fact, in that {problem}, $F$ different profiles of followers (risk-averse, risk-taker, etc.) could be considered, and the broker-dealer's goal would be maximizing the overall benefit for any linear function of its {costs}. This approach would allow the broker-dealer to improve the decision-making process in the cases where the same {costs} have to be set for all the investors, but different investor's profiles are considered.


{A detailed computational study has been conducted using data from the Dow Jones Industrial Average.} We have compared the solution methods, the solutions and the risk profiles within {problems}, and the solutions across {them}. From our computational experience, we have observed that the broker-dealer-leader investor-follower {problem} results in better solutions for both, the broker-dealer and the investor, in comparison with the investor-leader broker-dealer-follower {problem}. Furthermore, the social welfare model {problem}, as theoretically proved, in higher aggregated benefits.

\newpage

\appendix
\section{Appendix}

\subsection{Comparing solutions and risk profiles within {problems}}

\subsubsection*{Discrete Pareto front for \textbf{MSWP}}

\begin{figure}[H]
\centering

\begin{multicols}{2}

\fbox{\includegraphics[scale=0.35]{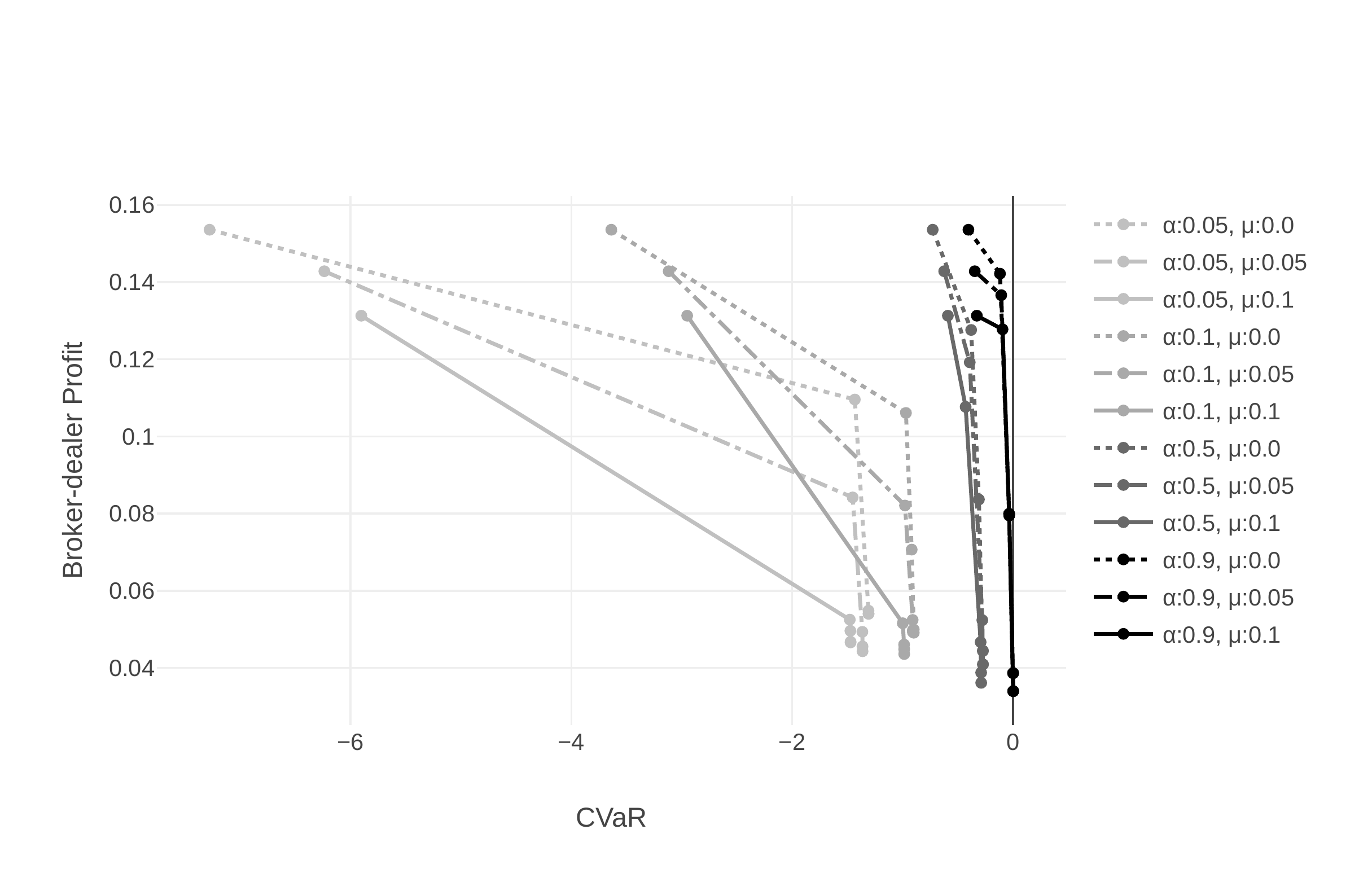}}

\fbox{\includegraphics[scale=0.35]{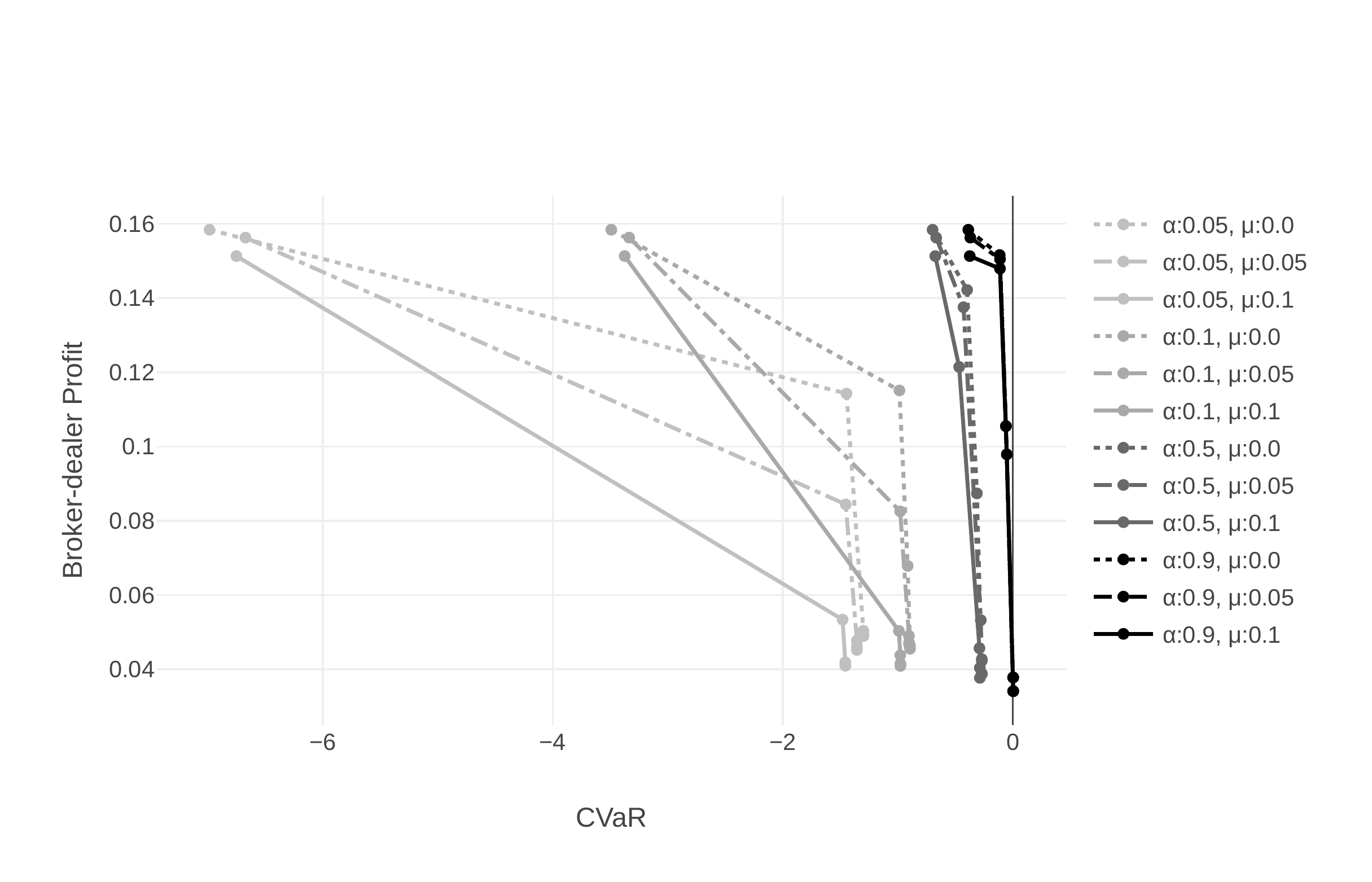}}

\end{multicols}
\caption{ {\footnotesize Discrete Pareto front of the \textbf{MSWP} for different values of $\xi\in \{0,0.25,0.5,0.75,1\}$ (from left to ritght) and different risk profiles, for instances type \textbf{A} (left), \textbf{B} (right).}}
\end{figure}

\begin{figure}[H]
\centering

\begin{multicols}{2}

\fbox{\includegraphics[scale=0.35]{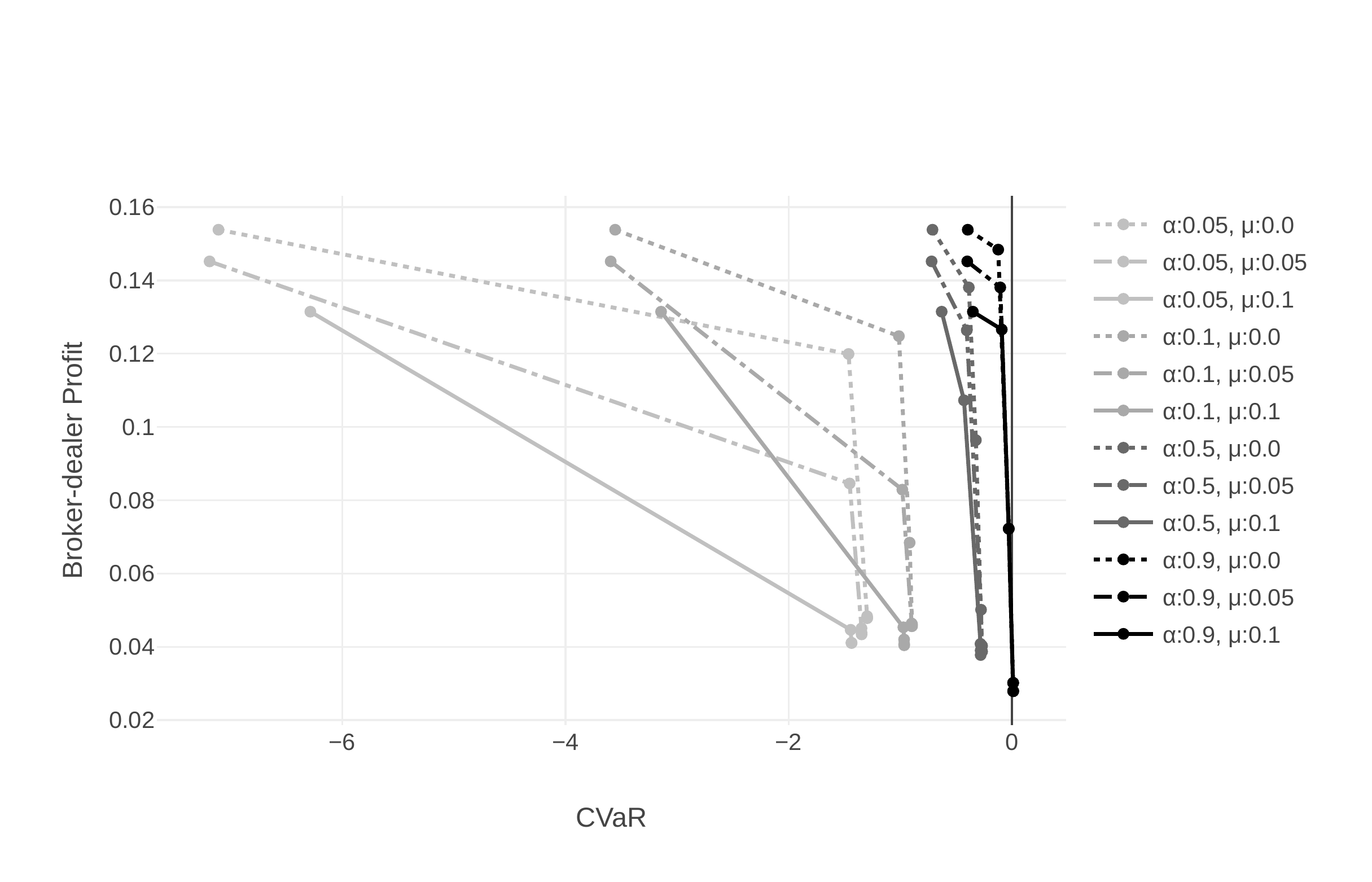}}

\fbox{\includegraphics[scale=0.35]{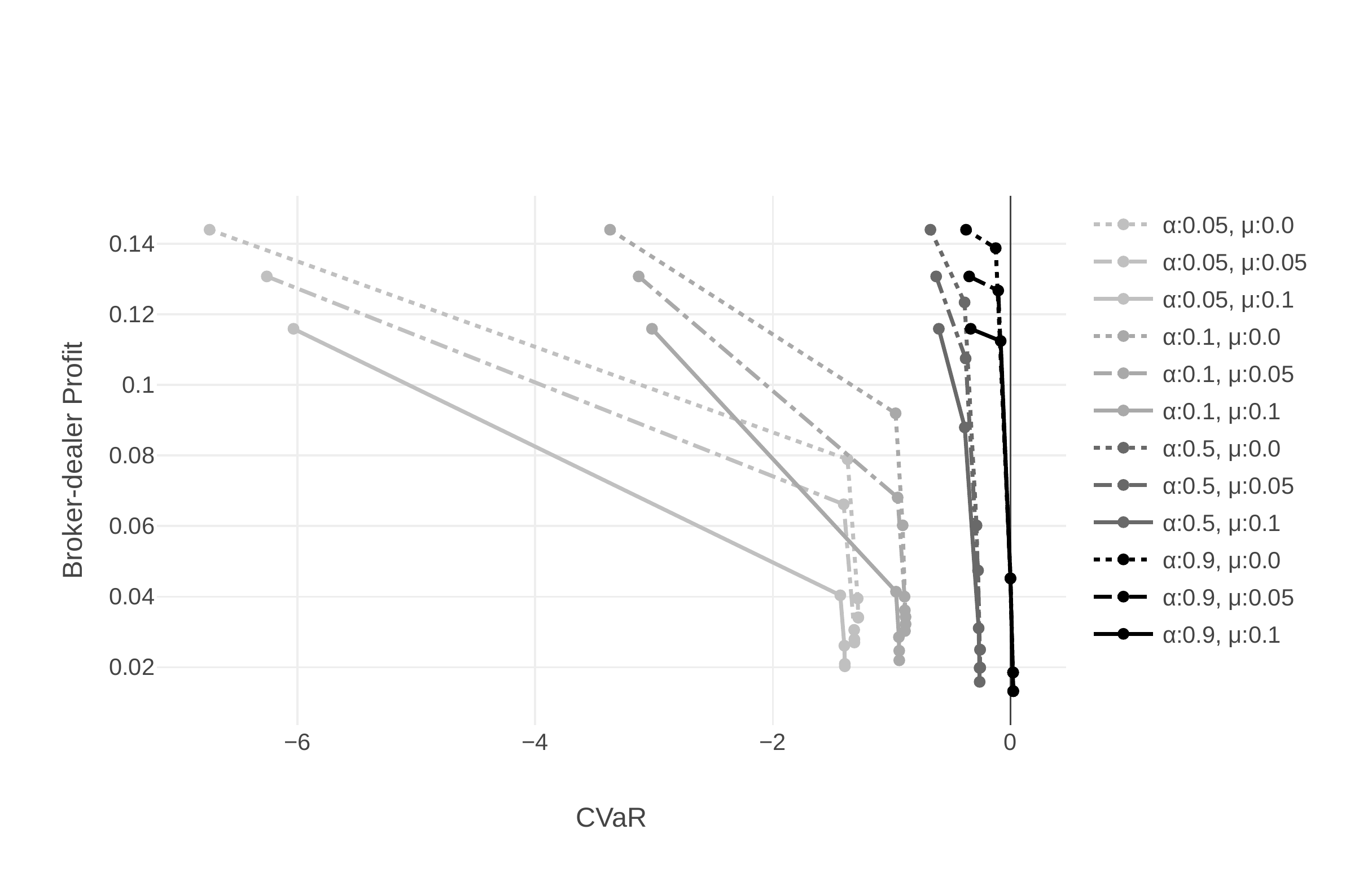}}

\end{multicols}
\caption{ {\footnotesize Discrete Pareto front of the \textbf{MSWP} for different values of $\xi\in \{0,0.25,0.5,0.75,1\}$ (from left to ritght) and different risk profiles, for instances type \textbf{C} (left), \textbf{D} (right).}}
\end{figure}

\begin{figure}[H]
\centering

\begin{multicols}{2}

\fbox{\includegraphics[scale=0.35]{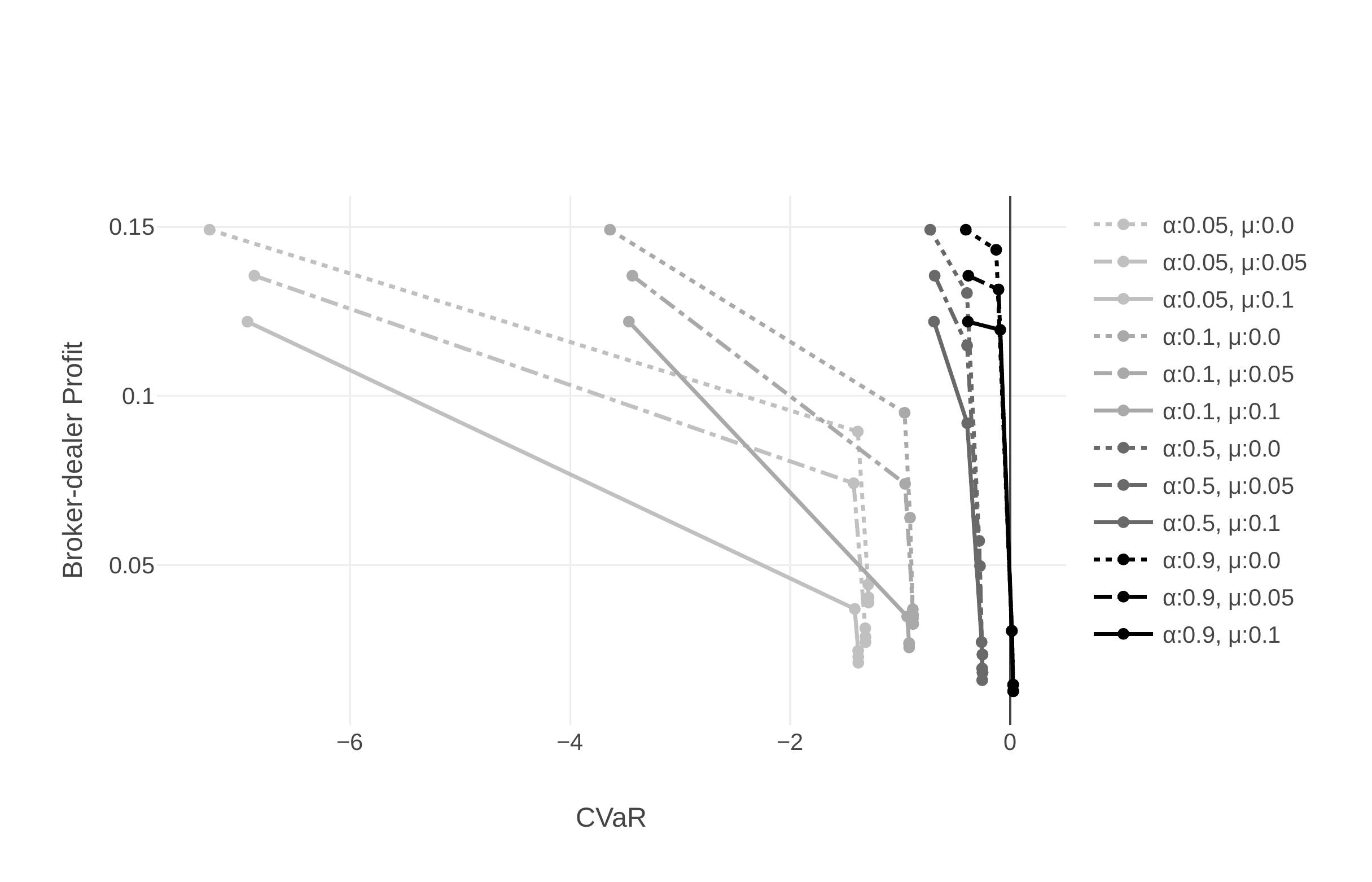}}

\fbox{\includegraphics[scale=0.35]{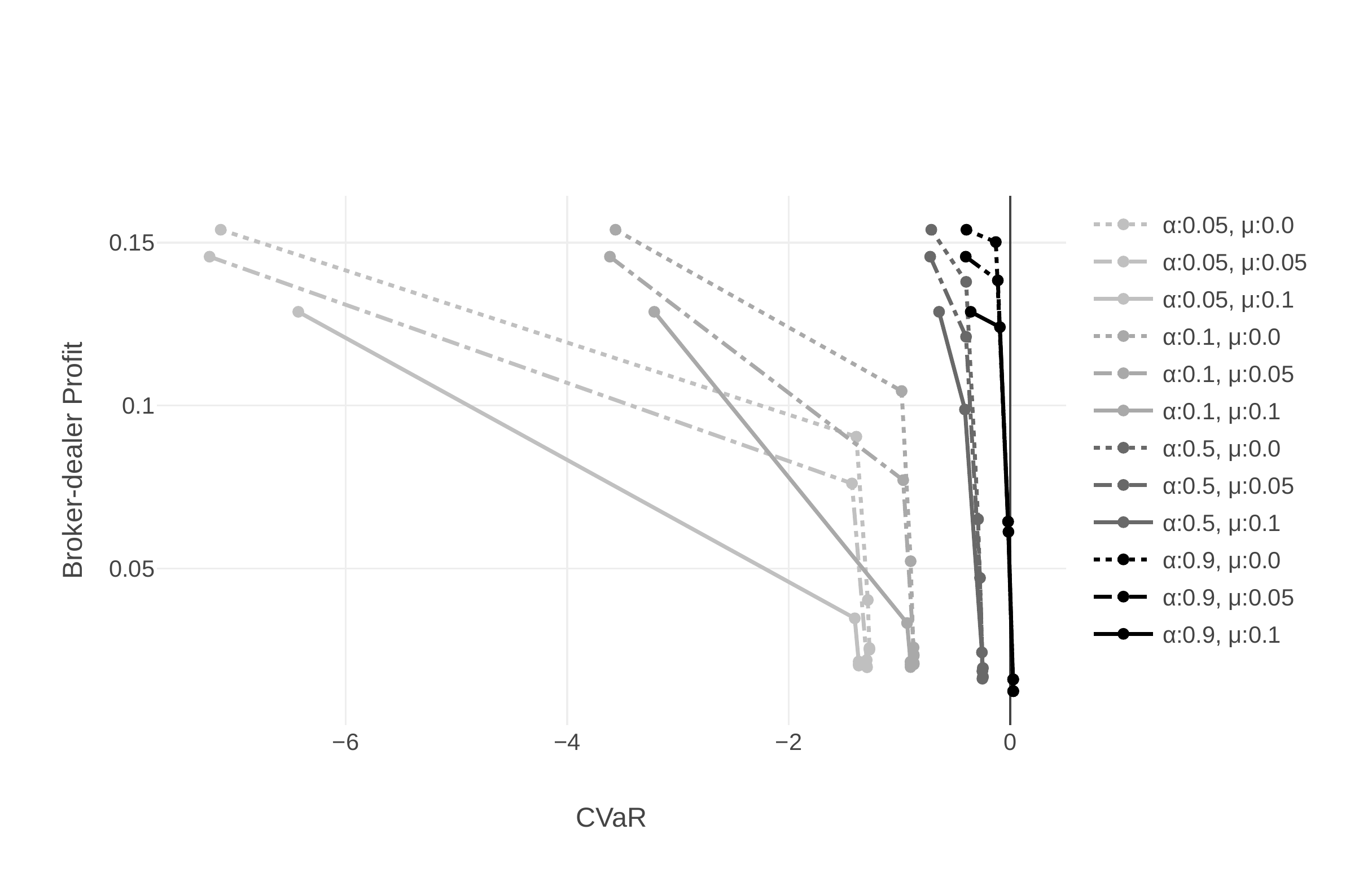}}

\end{multicols}
\caption{ {\footnotesize Discrete Pareto front of the \textbf{MSWP} for different values of $\xi\in \{0,0.25,0.5,0.75,1\}$ (from left to ritght) and different risk profiles, for instances type \textbf{E} (left), \textbf{F} (right).}}
\end{figure}

\begin{figure}[H]
\centering

\begin{multicols}{2}

\fbox{\includegraphics[scale=0.35]{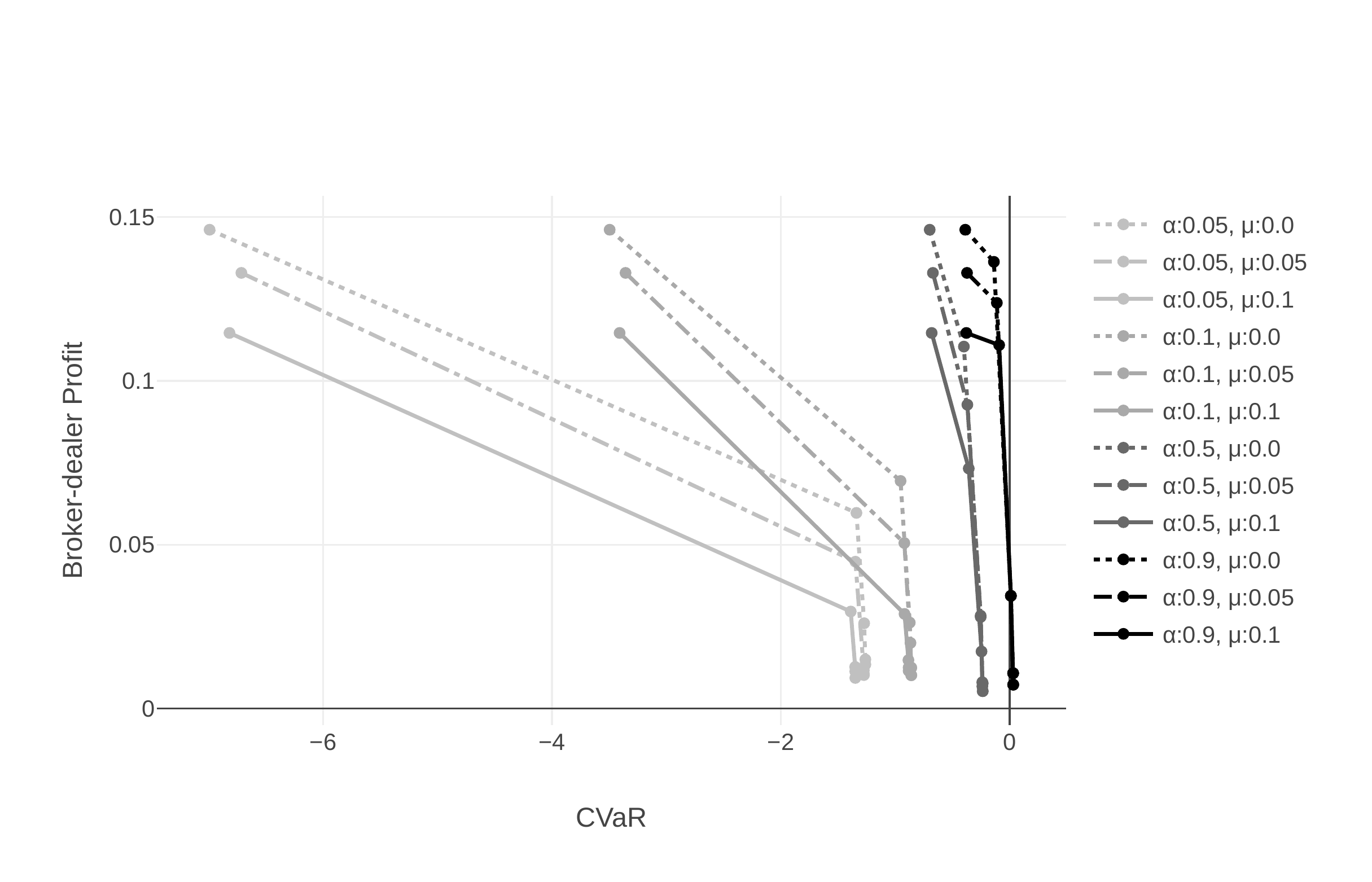}}

\fbox{\includegraphics[scale=0.35]{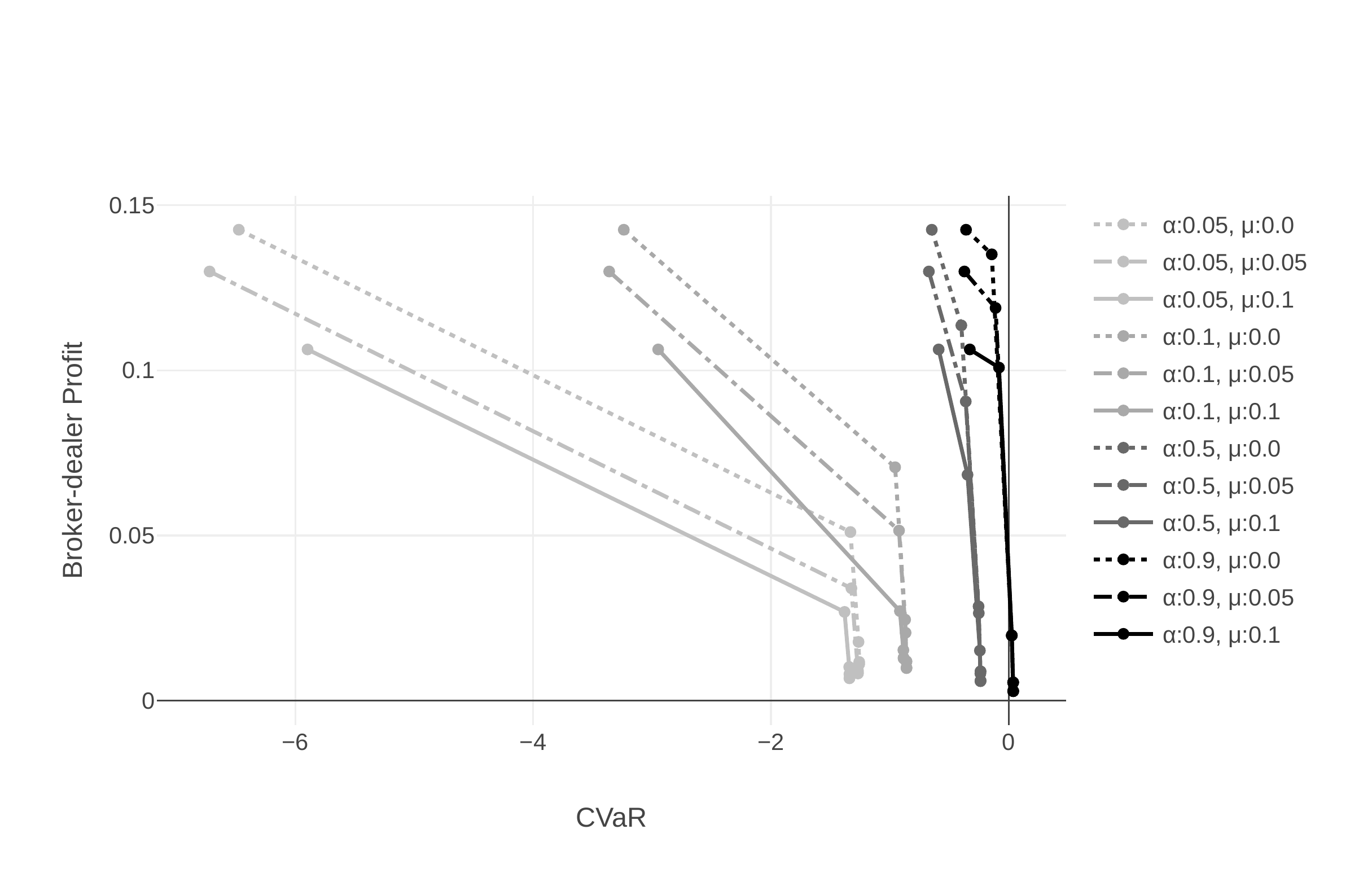}}

\end{multicols}
\caption{ {\footnotesize Discrete Pareto front of the \textbf{MSWP} for different values of $\xi\in \{0,0.25,0.5,0.75,1\}$ (from left to ritght) and different risk profiles, for instances type \textbf{G} (left), \textbf{H} (right).}}
\end{figure}

\begin{figure}[H]
\centering

\fbox{\includegraphics[scale=0.35]{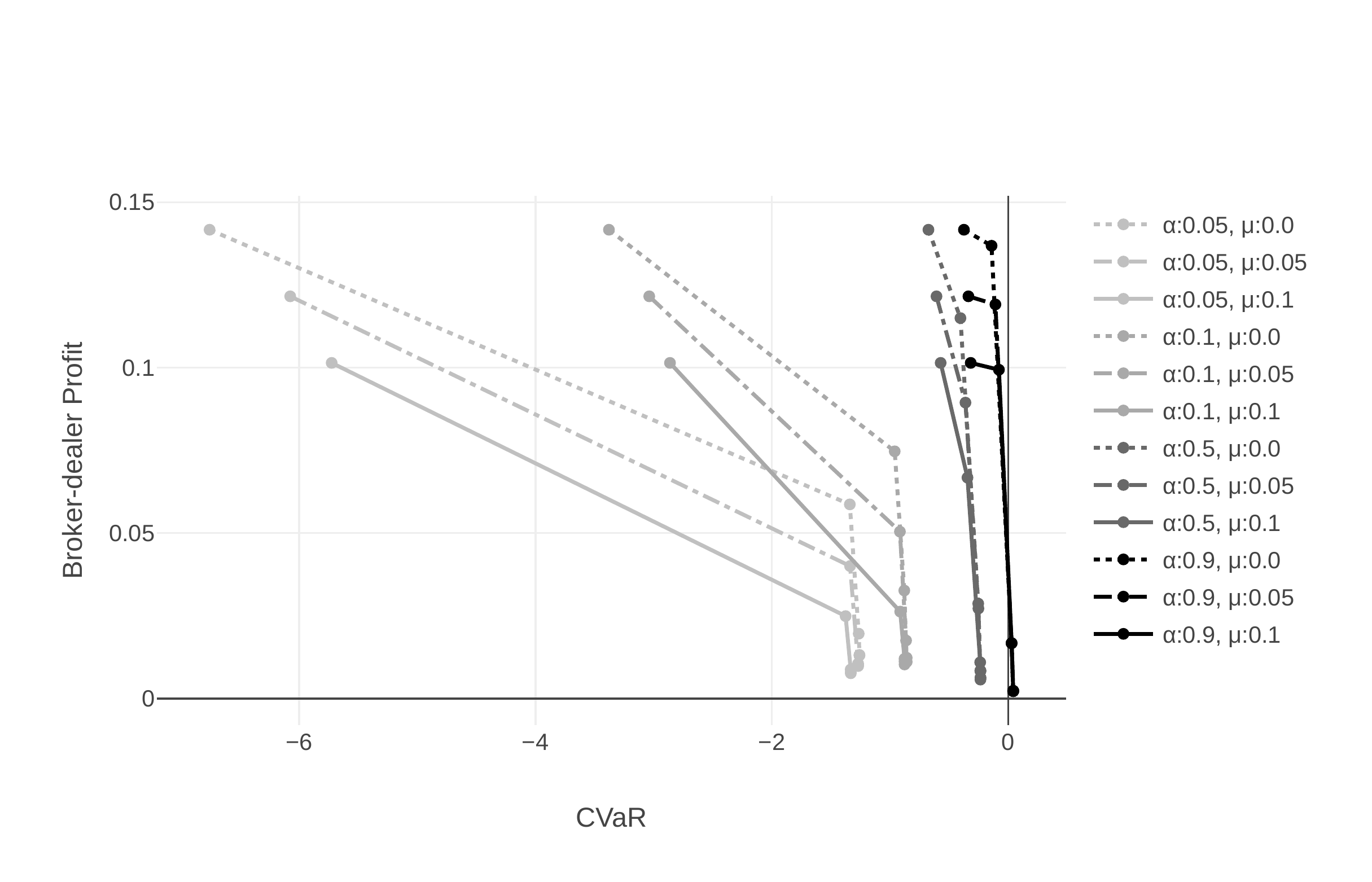}}

\caption{ {\footnotesize Discrete Pareto front of the \textbf{MSWP} for different values of $\xi\in \{0,0.25,0.5,0.75,1\}$ (from left to ritght) and different risk profiles, for instances type \textbf{I}}}
\end{figure}

\subsection{Comparing solutions across problems}

\subsubsection*{CVaR}
\begin{figure}[H]
\centering

\begin{multicols}{2}

\fbox{\includegraphics[scale=0.35]{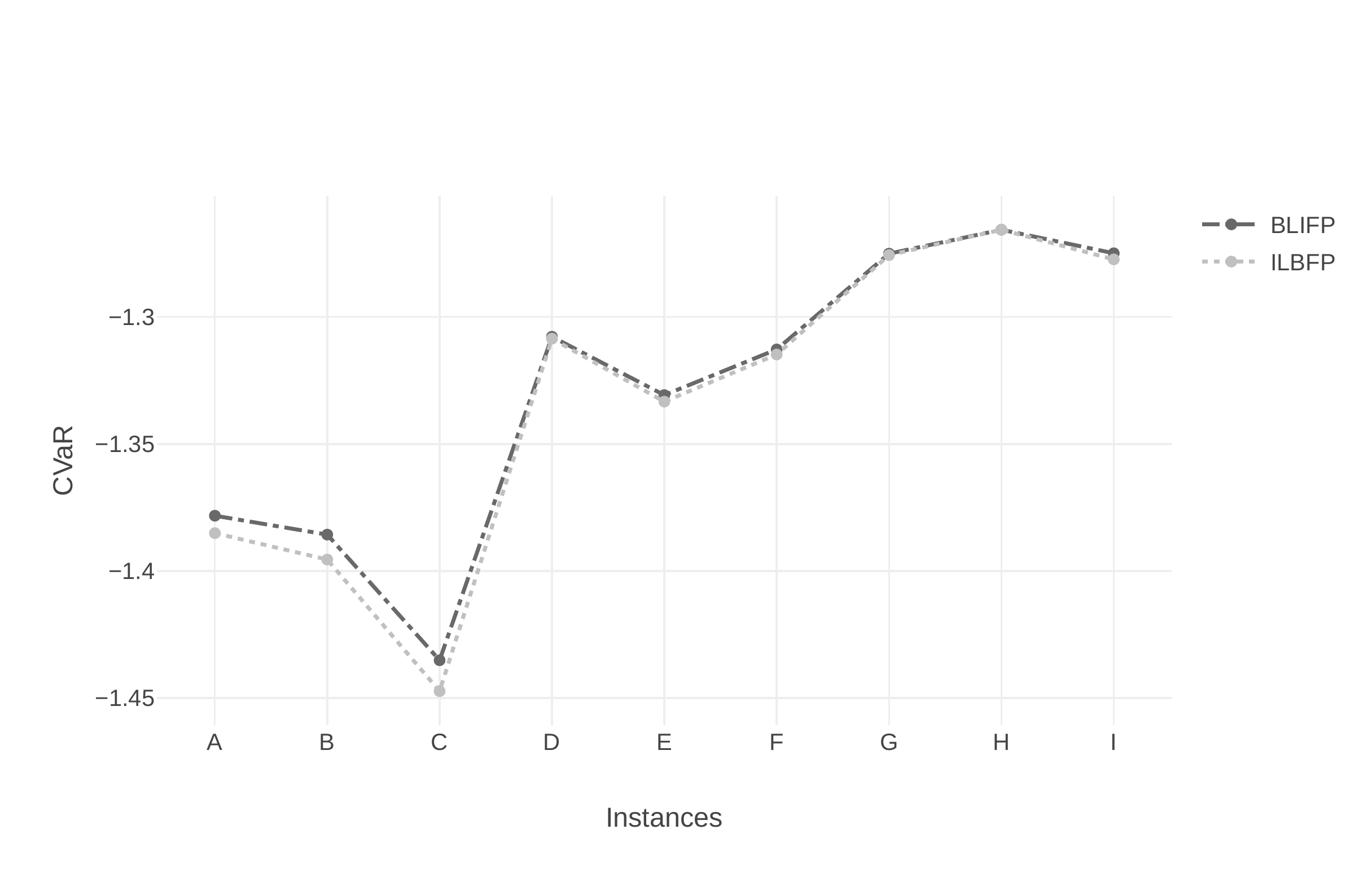}}

\fbox{\includegraphics[scale=0.35]{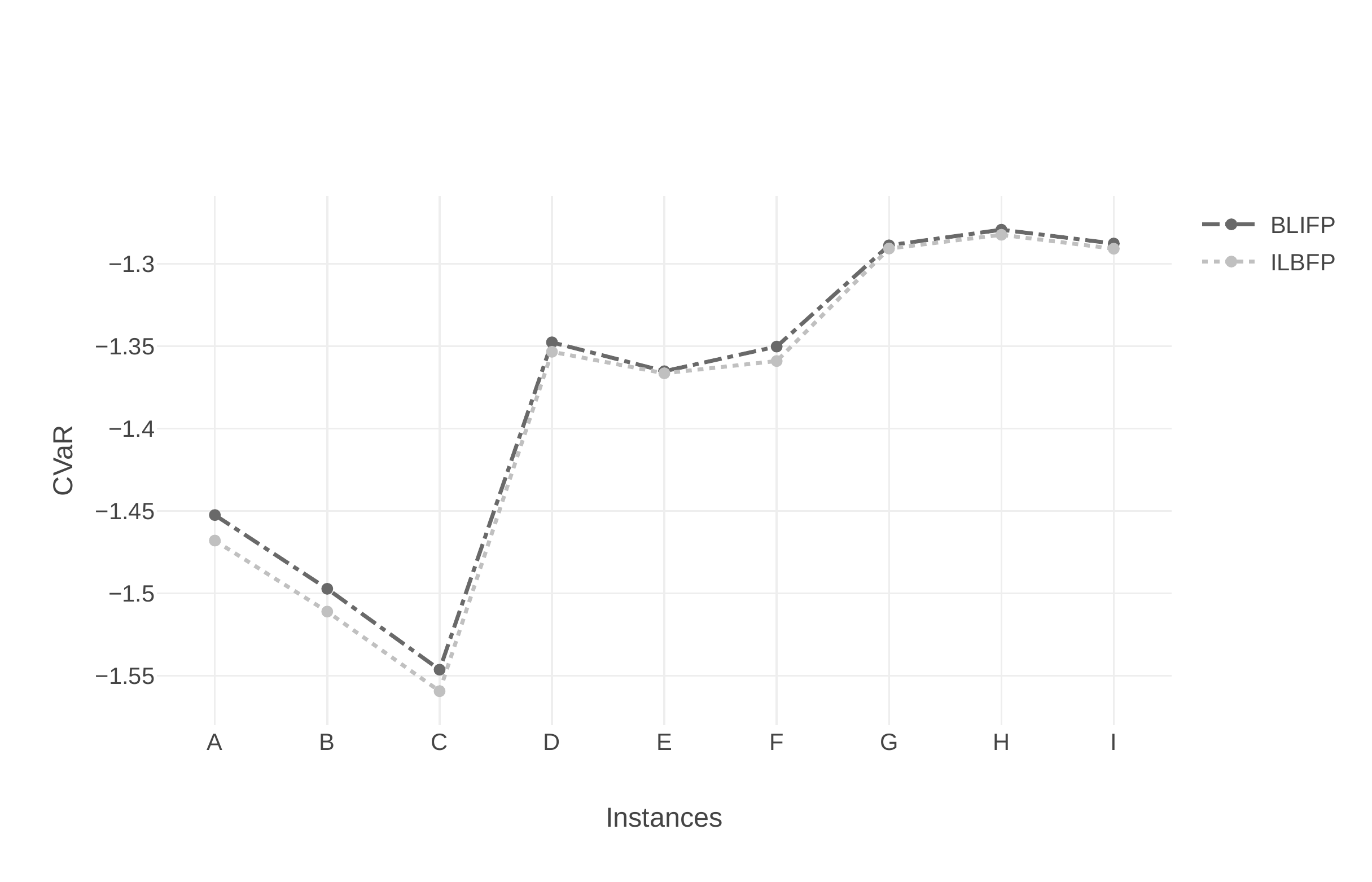}}

\end{multicols}
\caption{ {\footnotesize Values of the CVaR for {\textbf{BLIFP}} and {\textbf{ILBFP}} for $\alpha=0.05$ and $\mu_0=0$ (left) and for $\alpha=0.05$ and $\mu_0=0.05$ (right)}}\label{Grap:CVaR_compare}
\end{figure}

\begin{figure}[H]
\centering

\begin{multicols}{2}

\fbox{\includegraphics[scale=0.35]{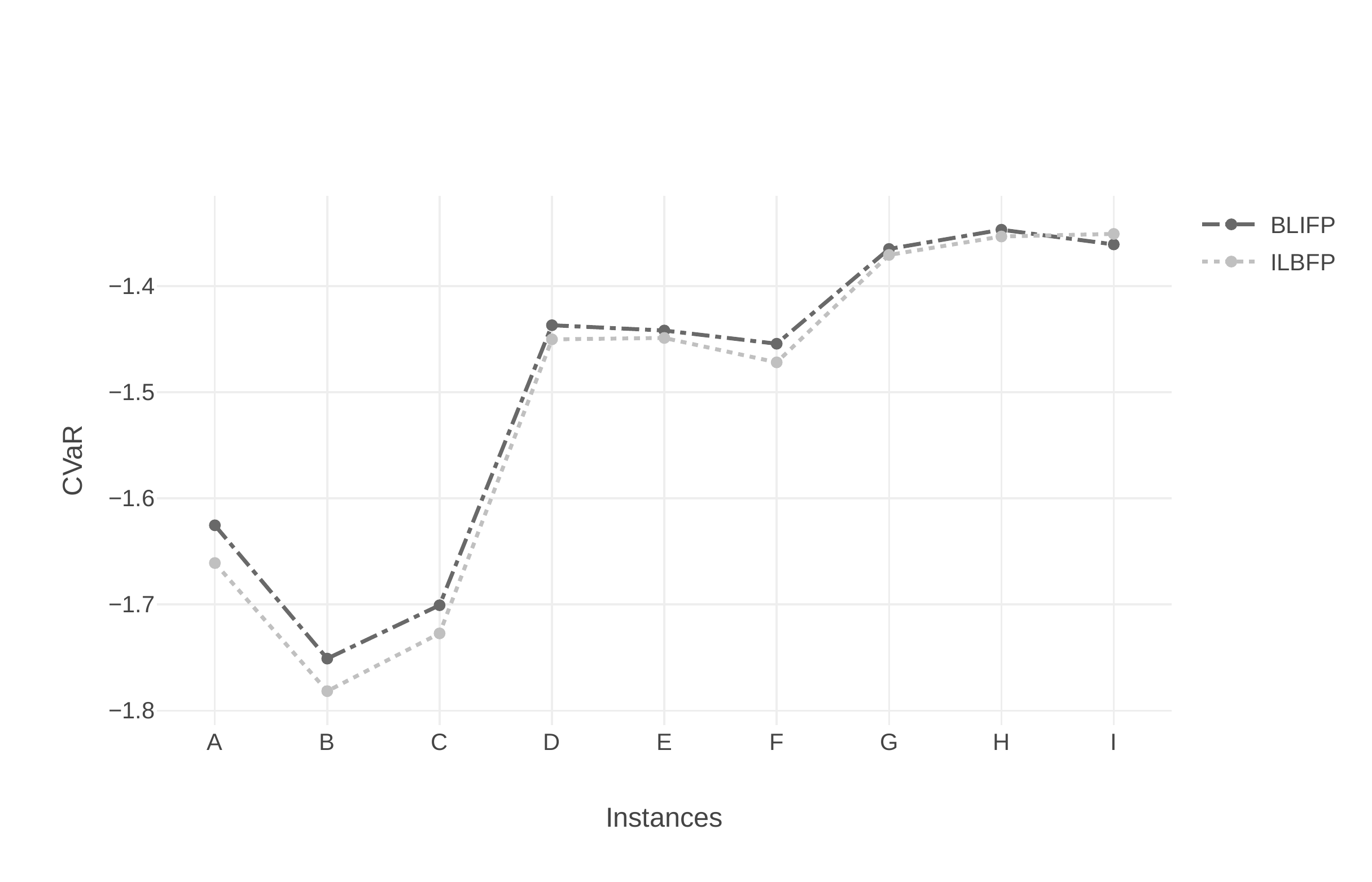}}

\fbox{\includegraphics[scale=0.35]{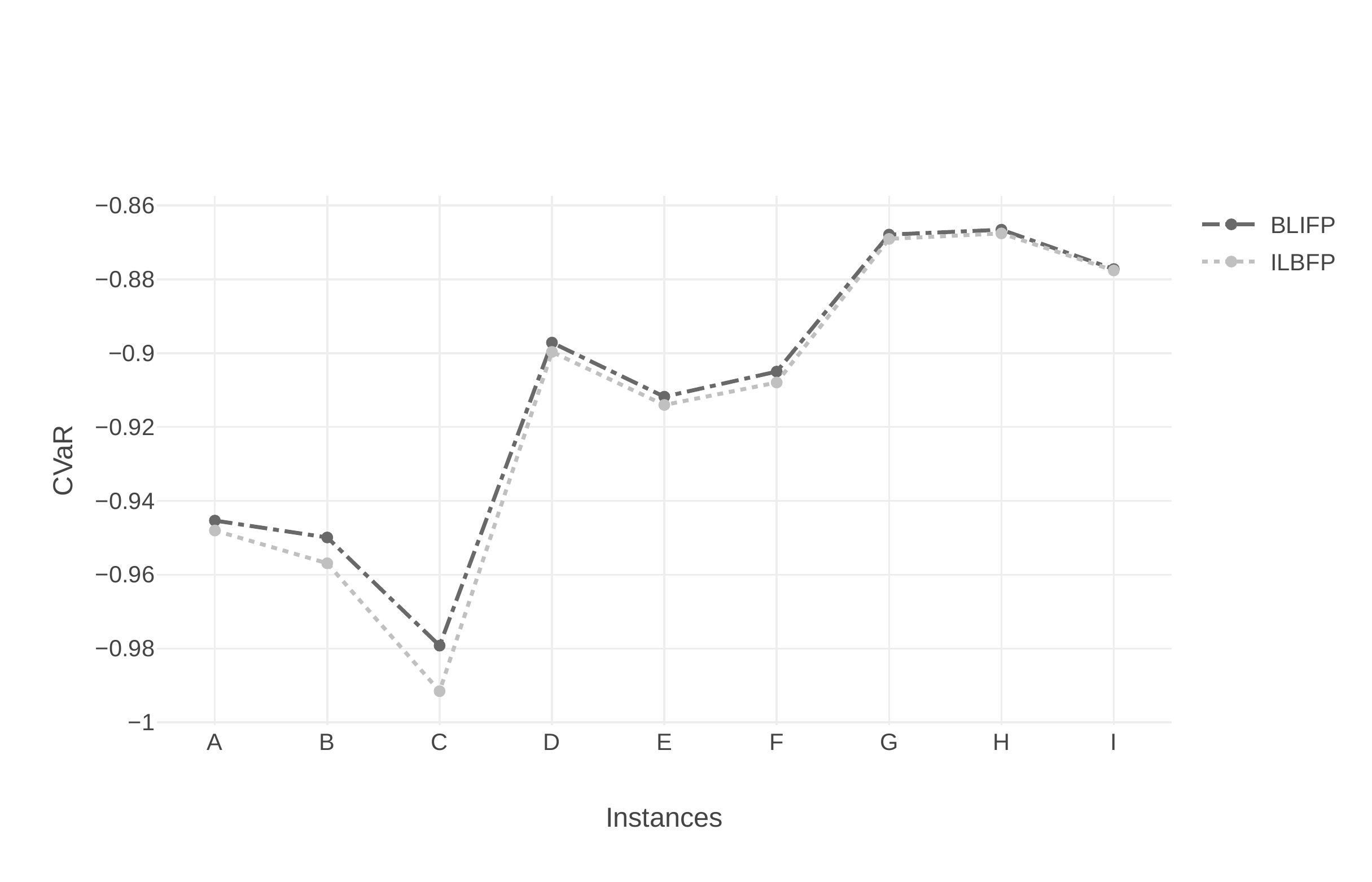}}

\end{multicols}
\caption{ {\footnotesize Values of the CVaR for {\textbf{BLIFP}} and {\textbf{ILBFP}} for $\alpha=0.05$ and $\mu_0=0.1$ (left) and for $\alpha=0.1$ and $\mu_0=0$ (right)}}
\end{figure}

\begin{figure}[H]
\centering

\begin{multicols}{2}

\fbox{\includegraphics[scale=0.35]{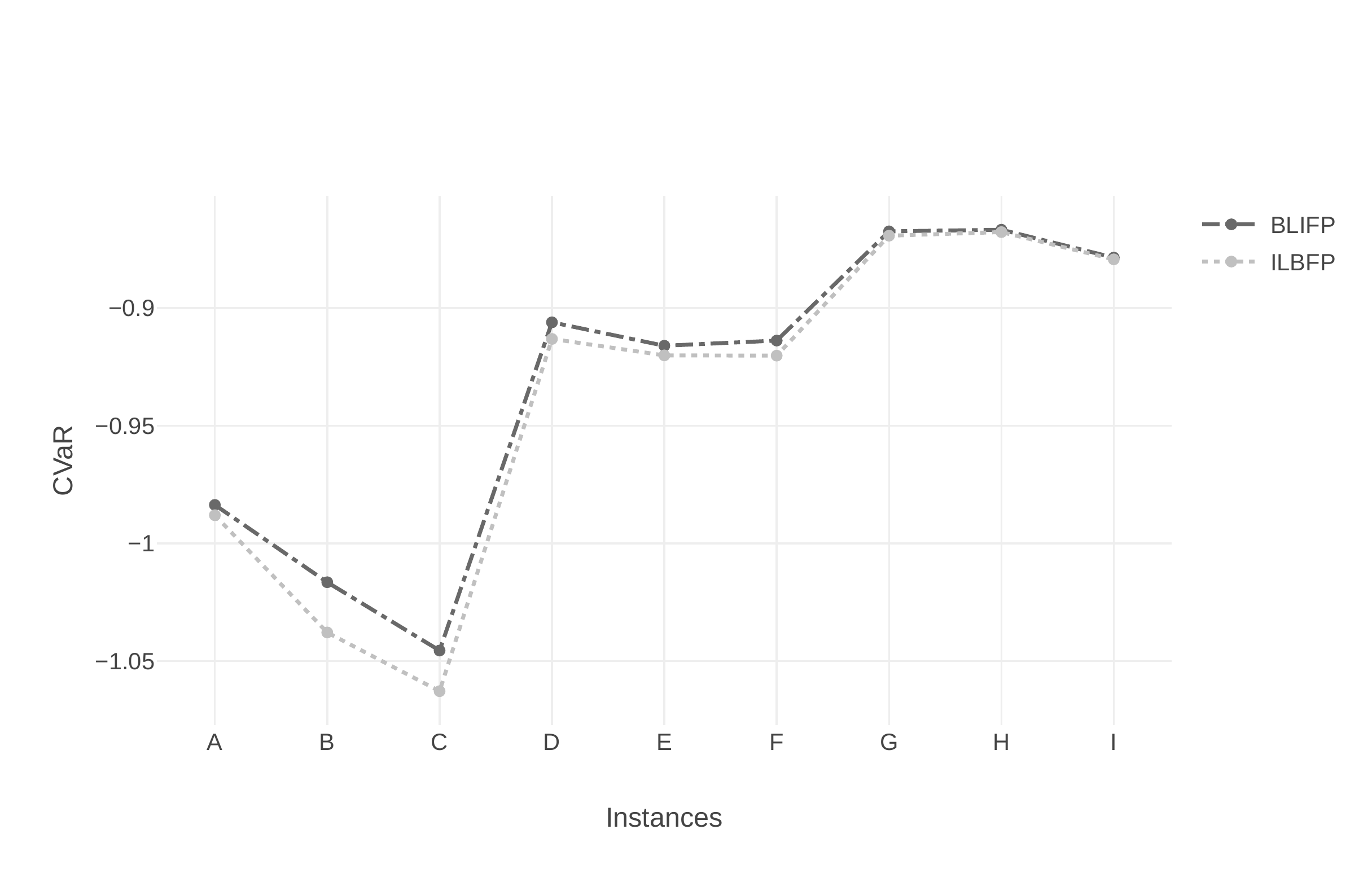}}

\fbox{\includegraphics[scale=0.35]{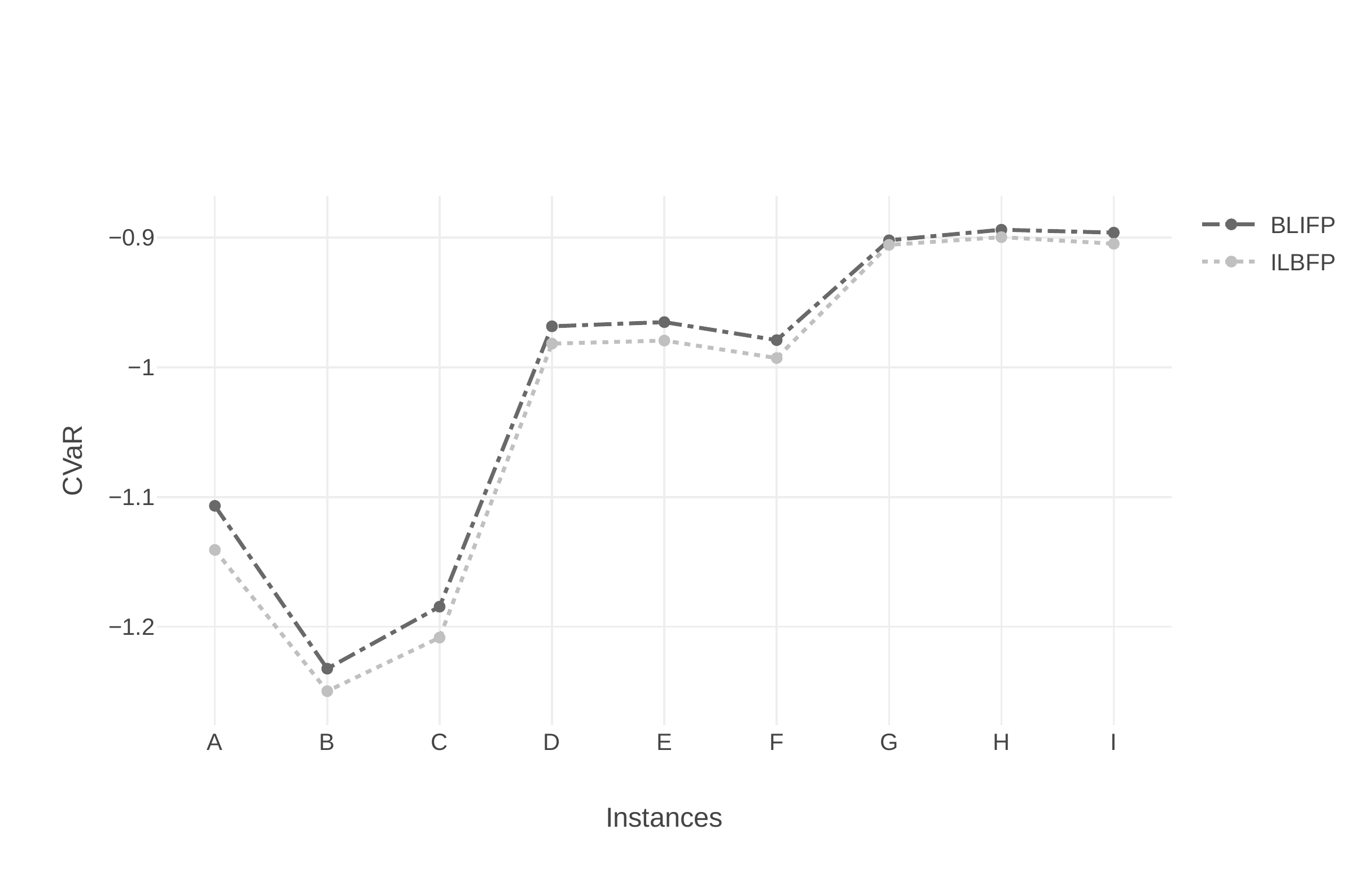}}

\end{multicols}
\caption{ {\footnotesize Values of the CVaR for {\textbf{BLIFP}} and {\textbf{ILBFP}} for $\alpha=0.1$ and $\mu_0=0.05$ (left) and for $\alpha=0.1$ and $\mu_0=0.1$ (right)}}
\end{figure}

\begin{figure}[H]
\centering

\begin{multicols}{2}

\fbox{\includegraphics[scale=0.35]{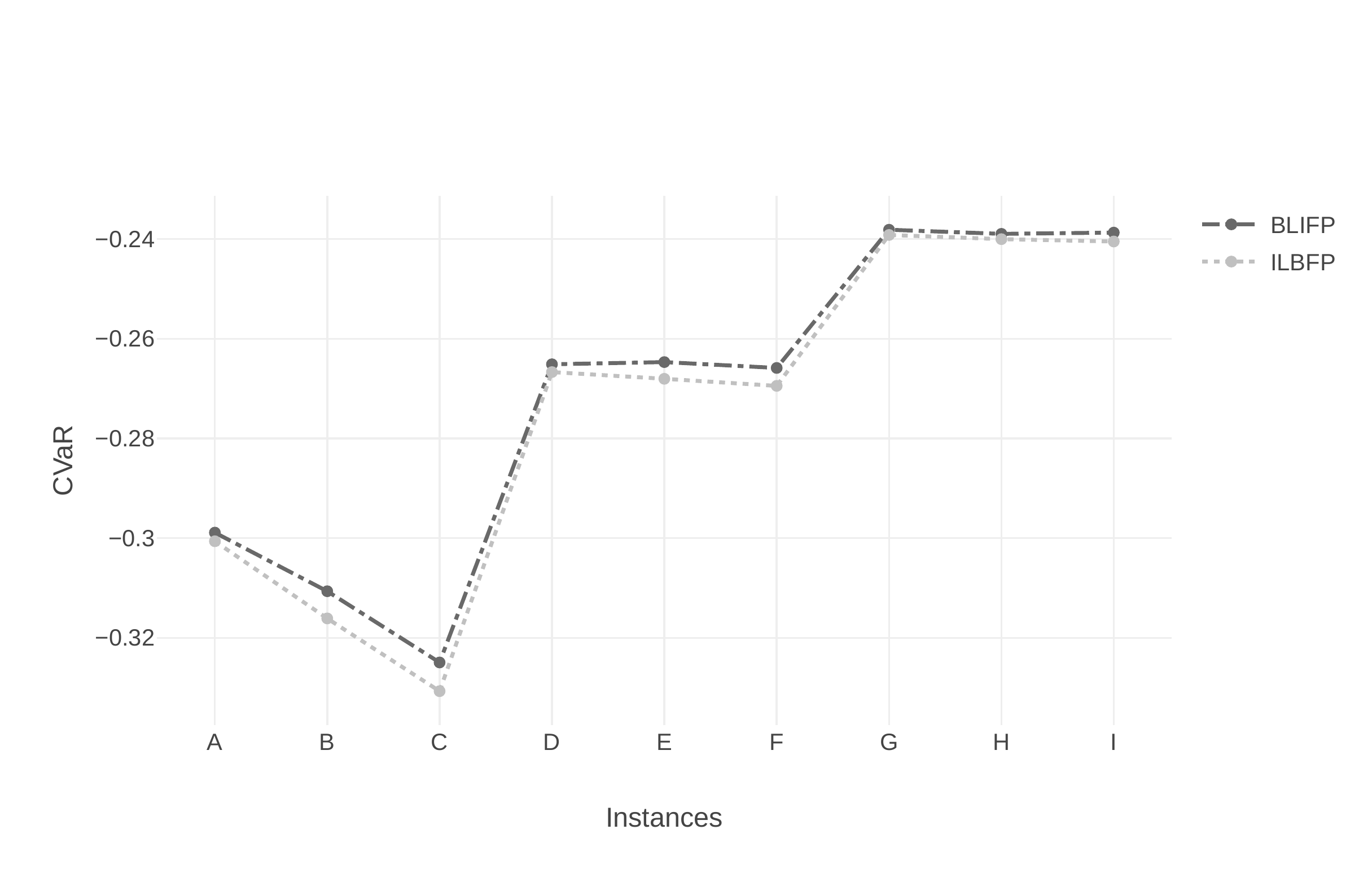}}

\fbox{\includegraphics[scale=0.35]{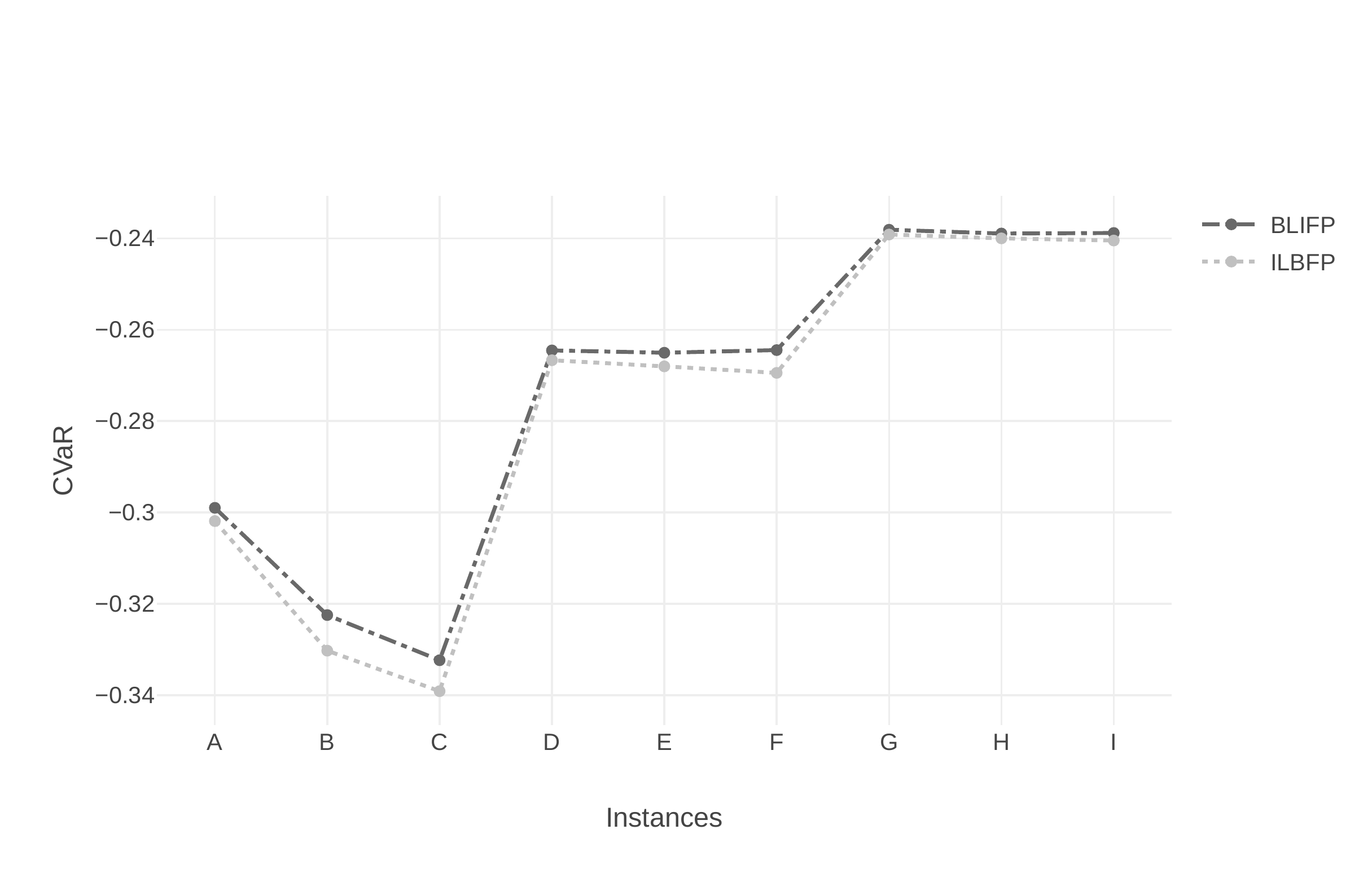}}

\end{multicols}
\caption{ {\footnotesize Values of the CVaR for {\textbf{BLIFP}} and {\textbf{ILBFP}} for $\alpha=0.5$ and $\mu_0=0$ (left) and for $\alpha=0.5$ and $\mu_0=0.05$ (right)}}
\end{figure}

\begin{figure}[H]
\centering

\begin{multicols}{2}

\fbox{\includegraphics[scale=0.35]{C05_010.pdf}}

\fbox{\includegraphics[scale=0.35]{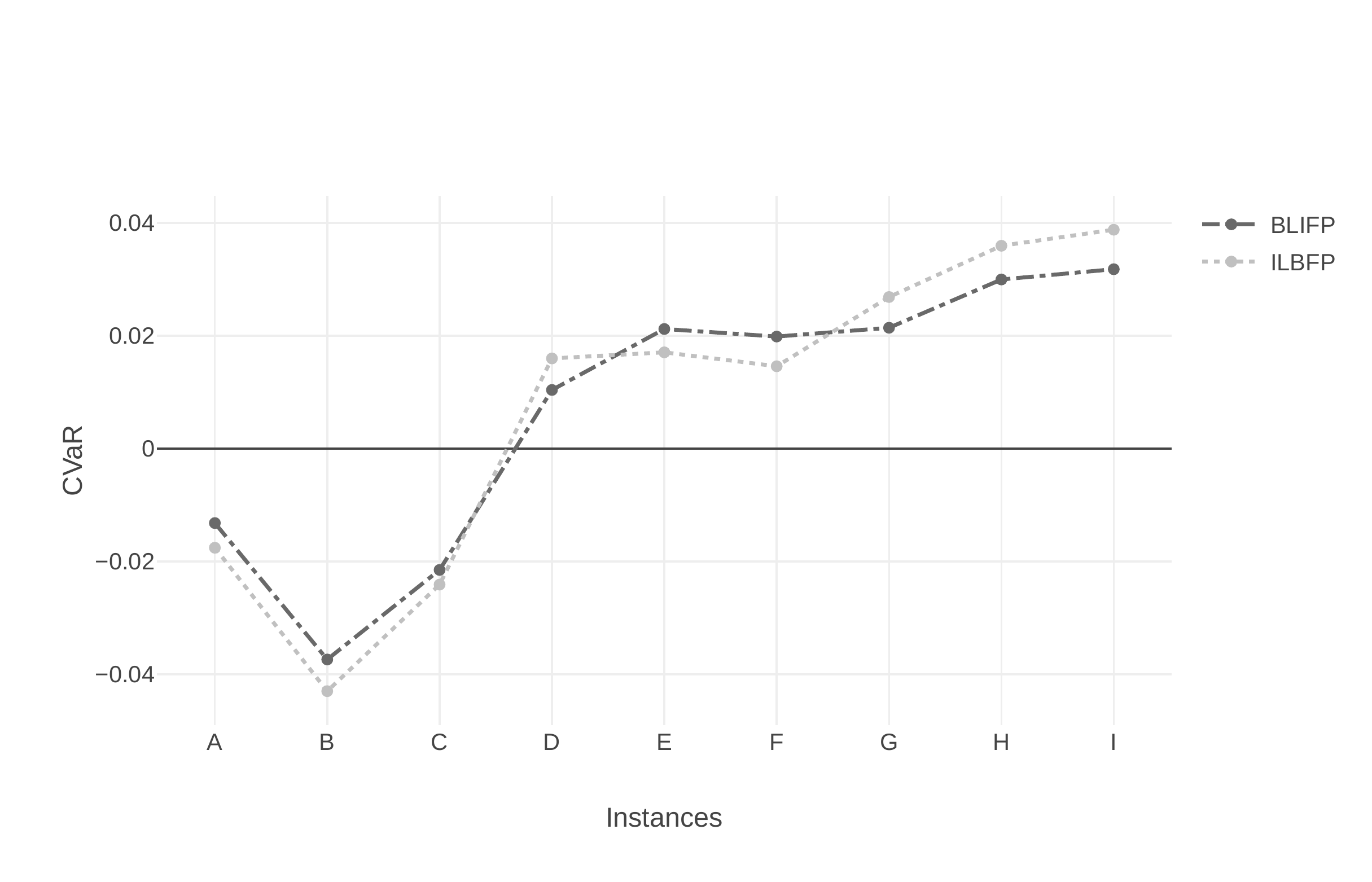}}

\end{multicols}
\caption{ {\footnotesize Values of the CVaR for {\textbf{BLIFP}} and {\textbf{ILBFP}} for $\alpha=0.5$ and $\mu_0=0.1$ (left) and for $\alpha=0.9$ and $\mu_0=0$ (right)}}
\end{figure}

\begin{figure}[H]
\centering

\begin{multicols}{2}

\fbox{\includegraphics[scale=0.35]{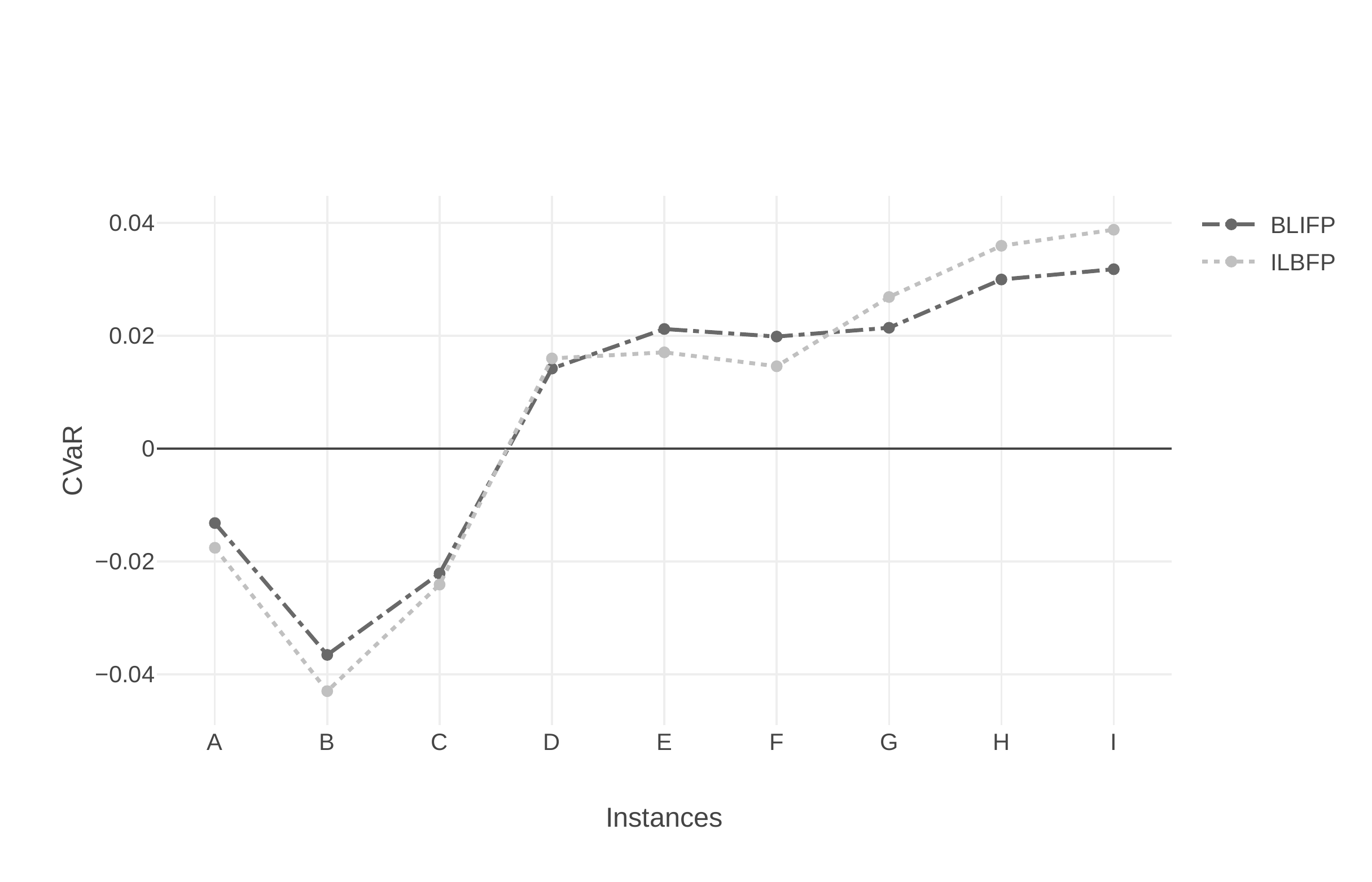}}

\fbox{\includegraphics[scale=0.35]{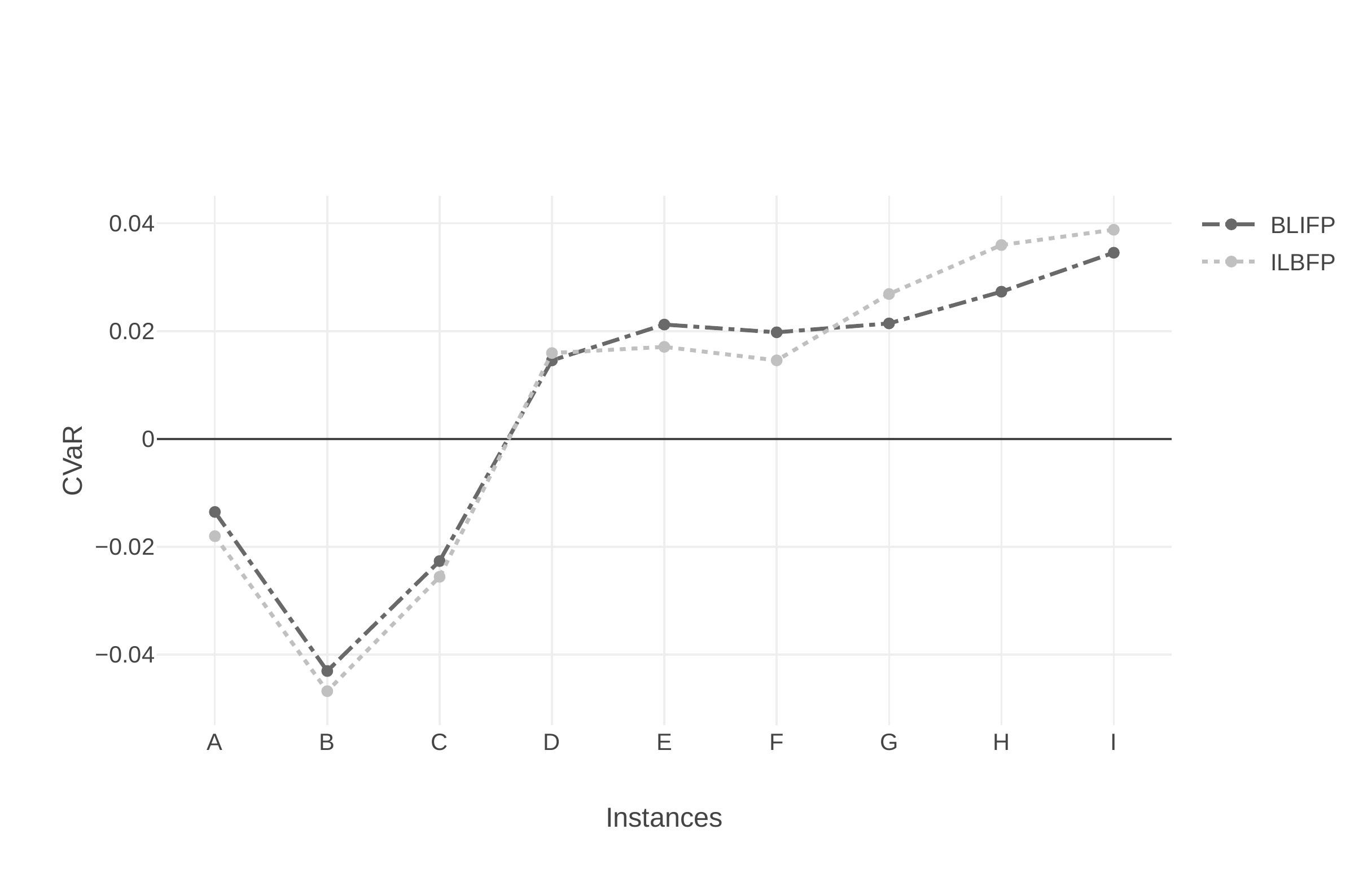}}

\end{multicols}
\caption{ {\footnotesize Values of the CVaR for {\textbf{BLIFP}} and {\textbf{ILBFP}} for $\alpha=0.9$ and $\mu_0=0.05$ (left) and for $\alpha=0.$ and $\mu_0=0.1$ (right)}}
\end{figure}

\subsubsection*{Broker-dealer profit}
\begin{figure}[H]
\centering

\begin{multicols}{2}

\fbox{\includegraphics[scale=0.35]{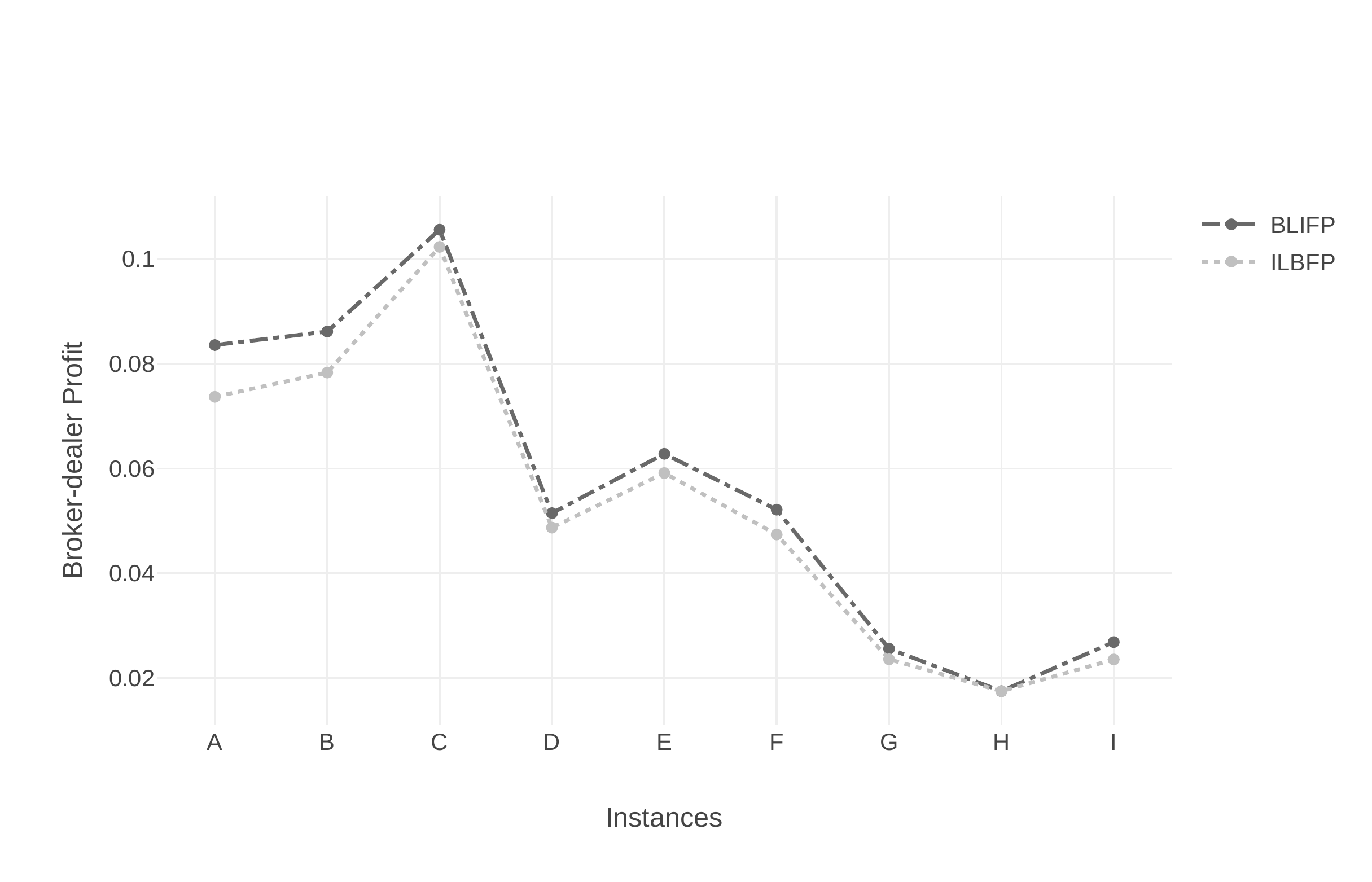}}

\fbox{\includegraphics[scale=0.35]{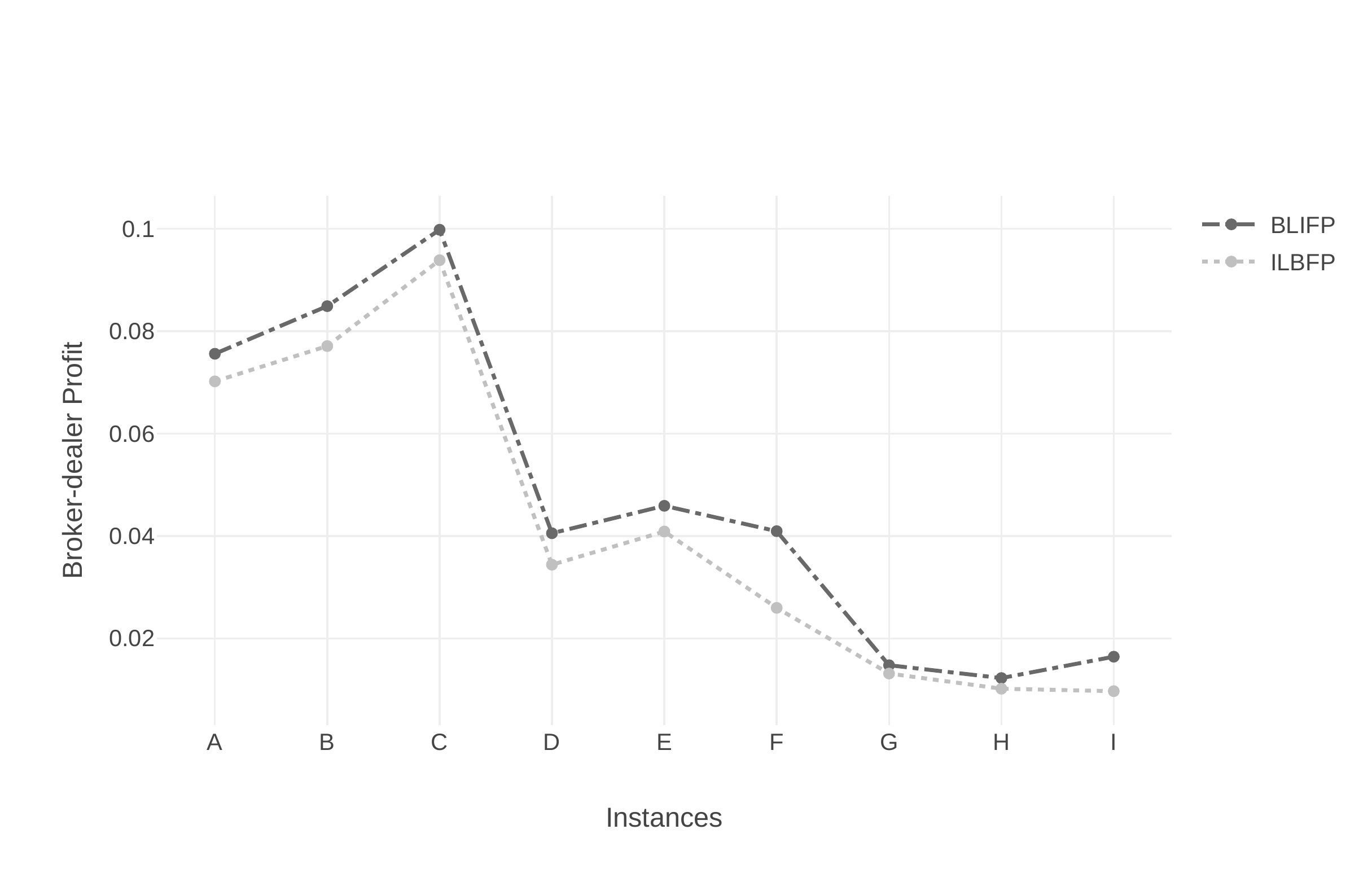}}

\end{multicols}
\caption{ {\footnotesize Values of the broker-dealer profit for {\textbf{BLIFP}} and {\textbf{ILBFP}} for $\alpha=0.05$ and $\mu_0=0$ (left) and for $\alpha=0.05$ and $\mu_0=0.05$ (right)}}\label{Grap:broker-dealer profit_compare}
\end{figure}

\begin{figure}[H]
\centering

\begin{multicols}{2}

\fbox{\includegraphics[scale=0.35]{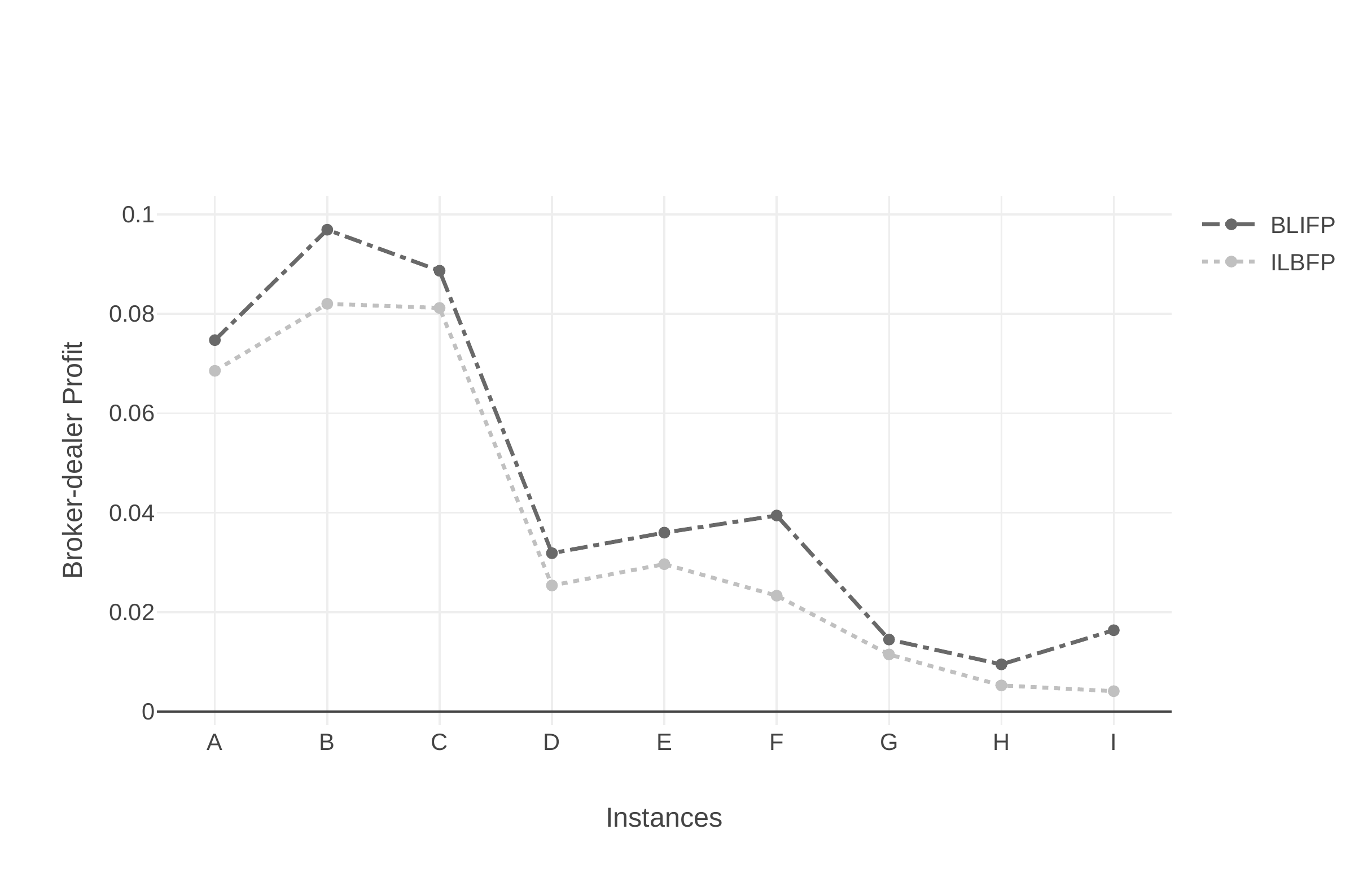}}

\fbox{\includegraphics[scale=0.35]{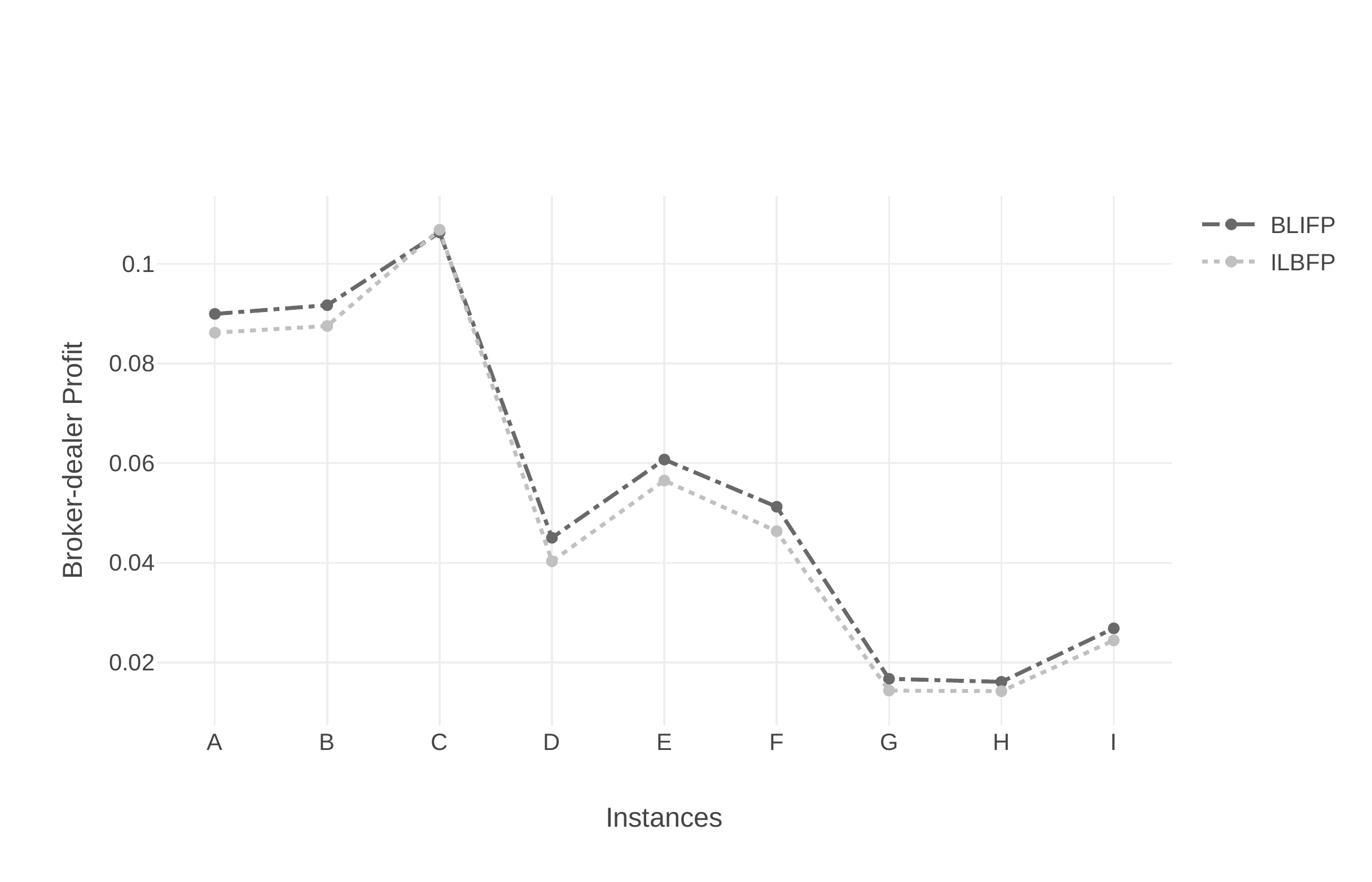}}

\end{multicols}
\caption{ {\footnotesize Values of the broker-dealer profit for {\textbf{BLIFP}} and {\textbf{ILBFP}} for $\alpha=0.05$ and $\mu_0=0.1$ (left) and for $\alpha=0.1$ and $\mu_0=0$ (right)}}
\end{figure}

\begin{figure}[H]
\centering

\begin{multicols}{2}

\fbox{\includegraphics[scale=0.35]{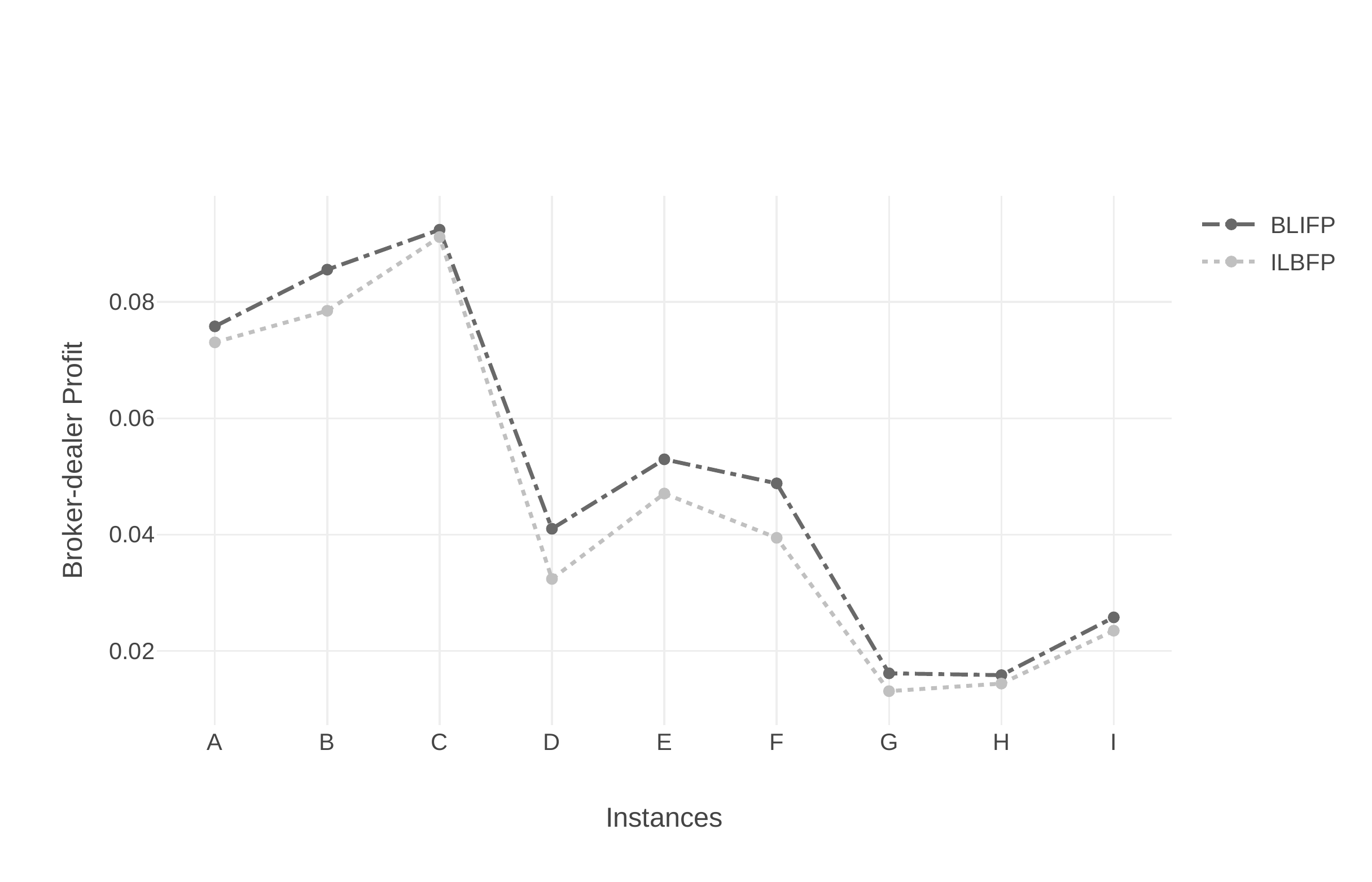}}

\fbox{\includegraphics[scale=0.35]{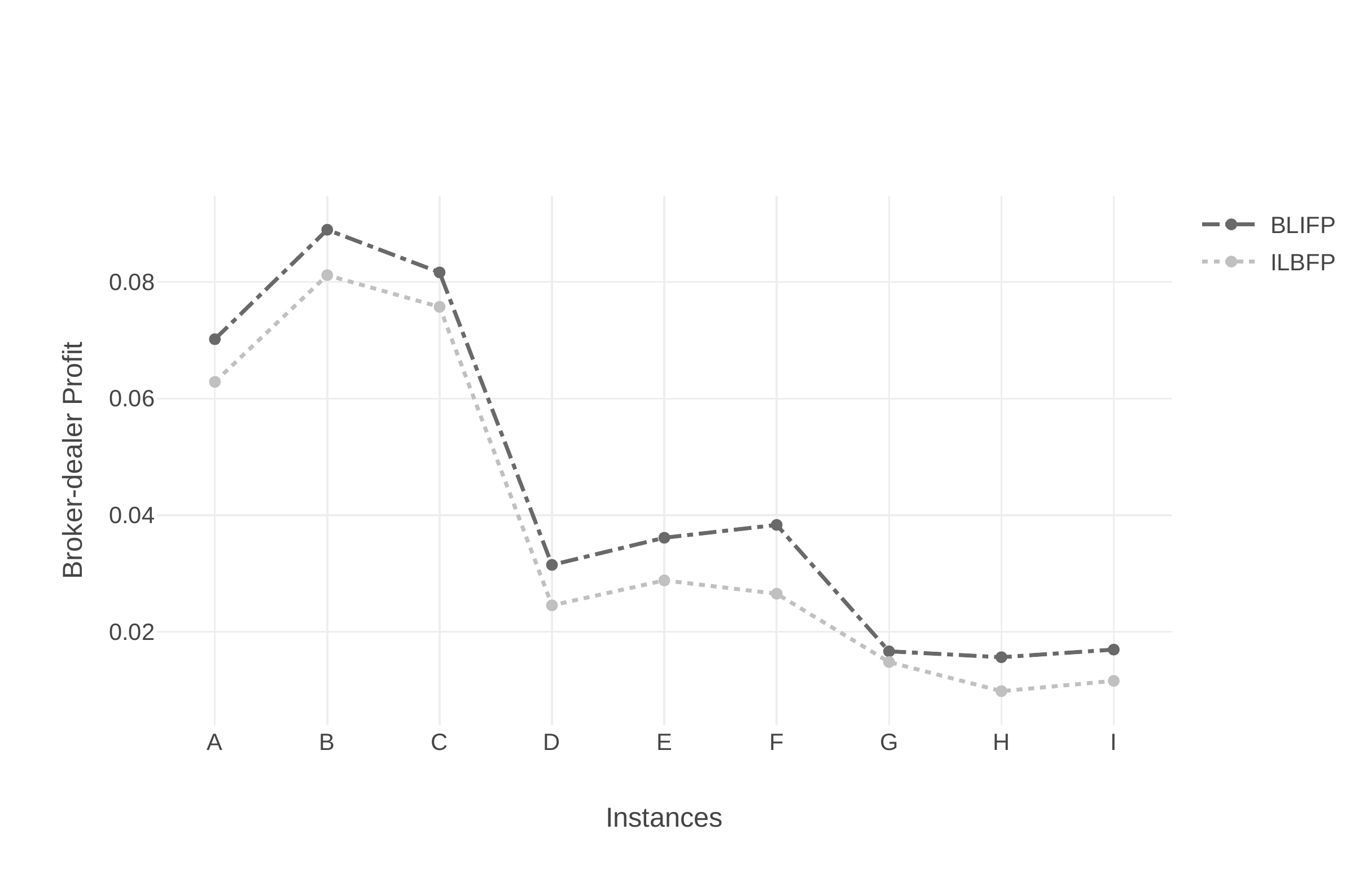}}

\end{multicols}
\caption{ {\footnotesize Values of the broker-dealer profit for {\textbf{BLIFP}} and {\textbf{ILBFP}} for $\alpha=0.1$ and $\mu_0=0.05$ (left) and for $\alpha=0.1$ and $\mu_0=0.1$ (right)}}
\end{figure}

\begin{figure}[H]
\centering

\begin{multicols}{2}

\fbox{\includegraphics[scale=0.35]{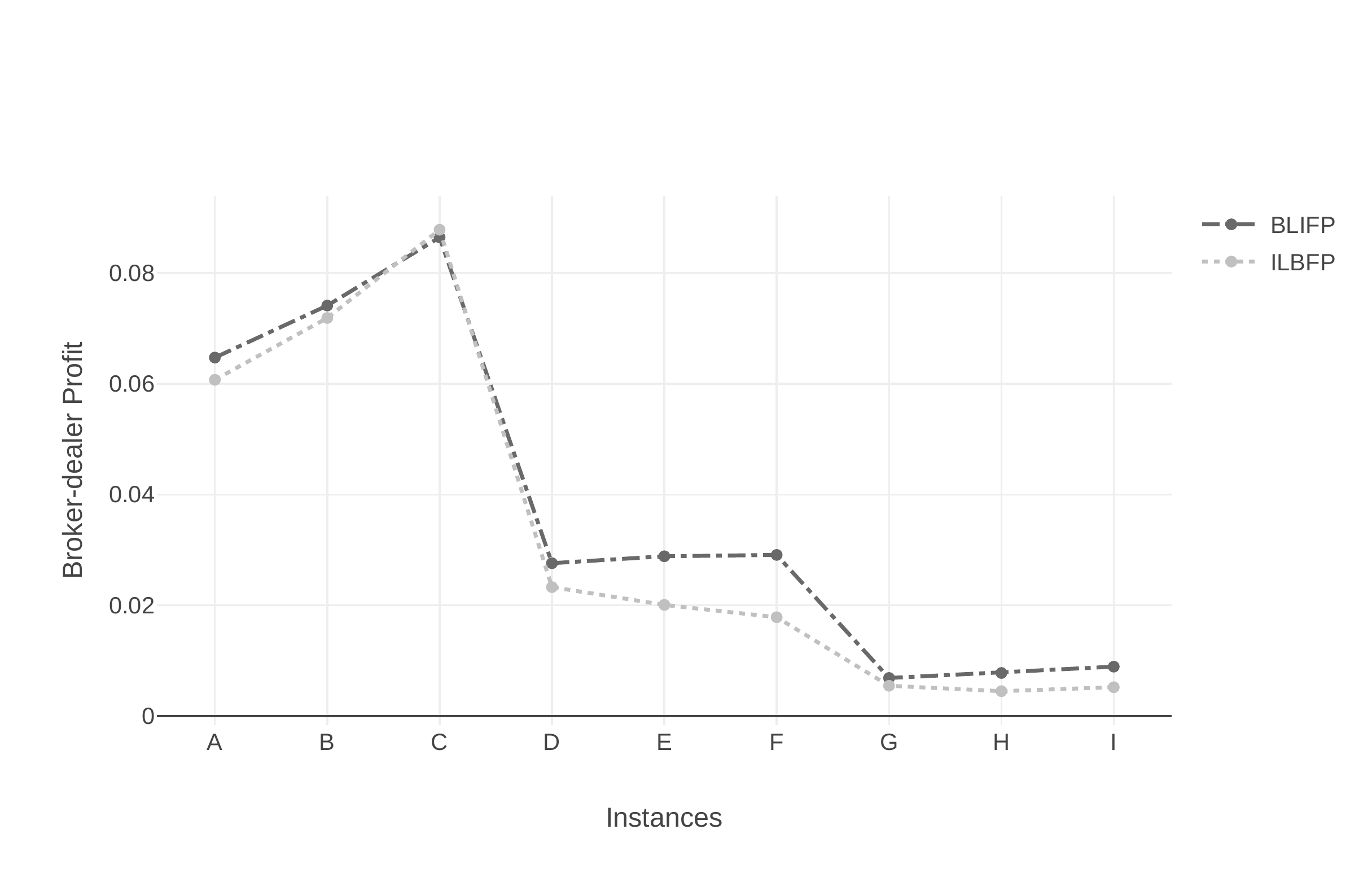}}

\fbox{\includegraphics[scale=0.35]{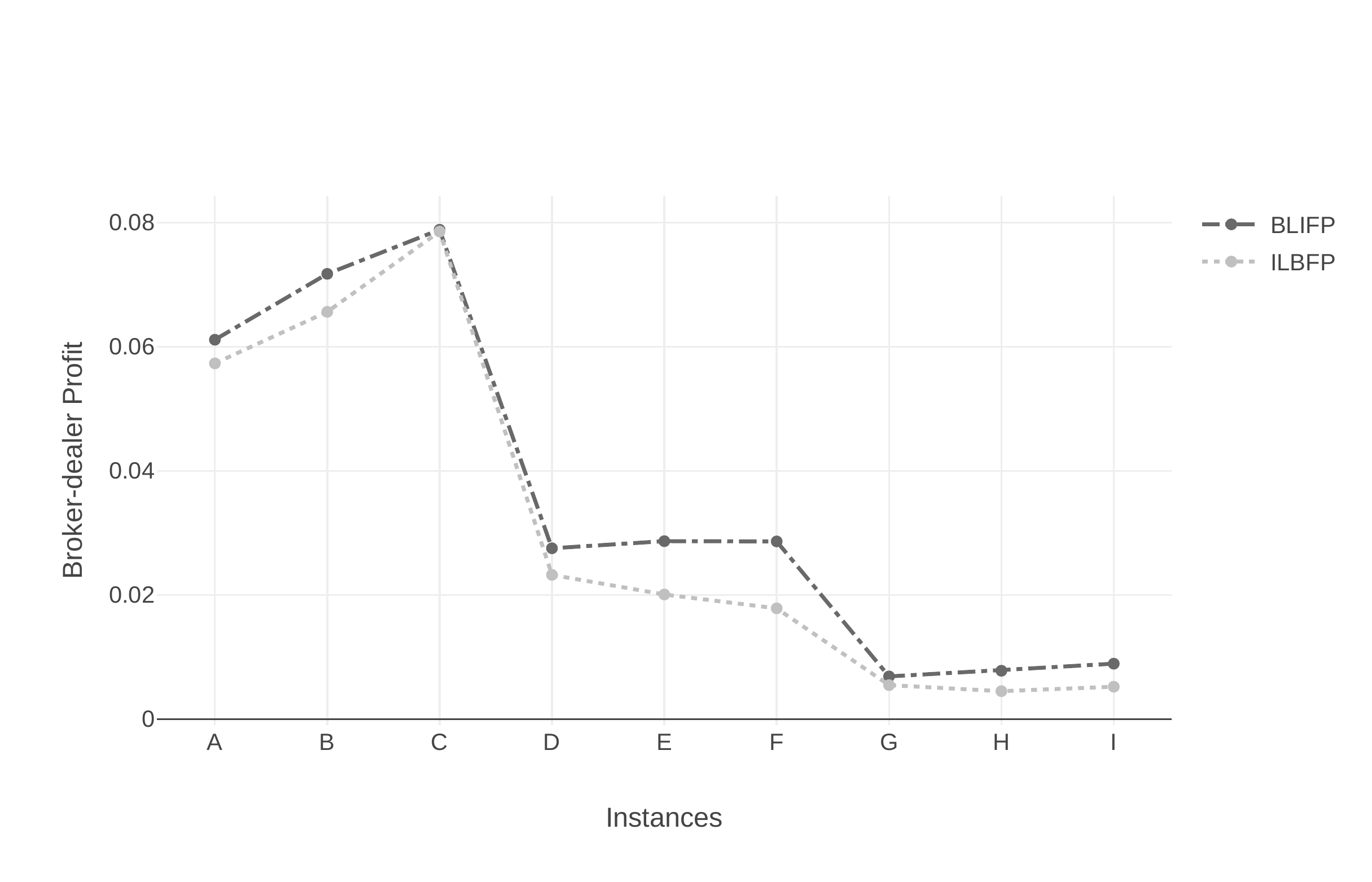}}

\end{multicols}
\caption{ {\footnotesize Values of the broker-dealer profit for {\textbf{BLIFP}} and {\textbf{ILBFP}} for $\alpha=0.5$ and $\mu_0=0$ (left) and for $\alpha=0.5$ and $\mu_0=0.05$ (right)}}
\end{figure}

\begin{figure}[H]
\centering

\begin{multicols}{2}

\fbox{\includegraphics[scale=0.35]{B05_010.pdf}}

\fbox{\includegraphics[scale=0.35]{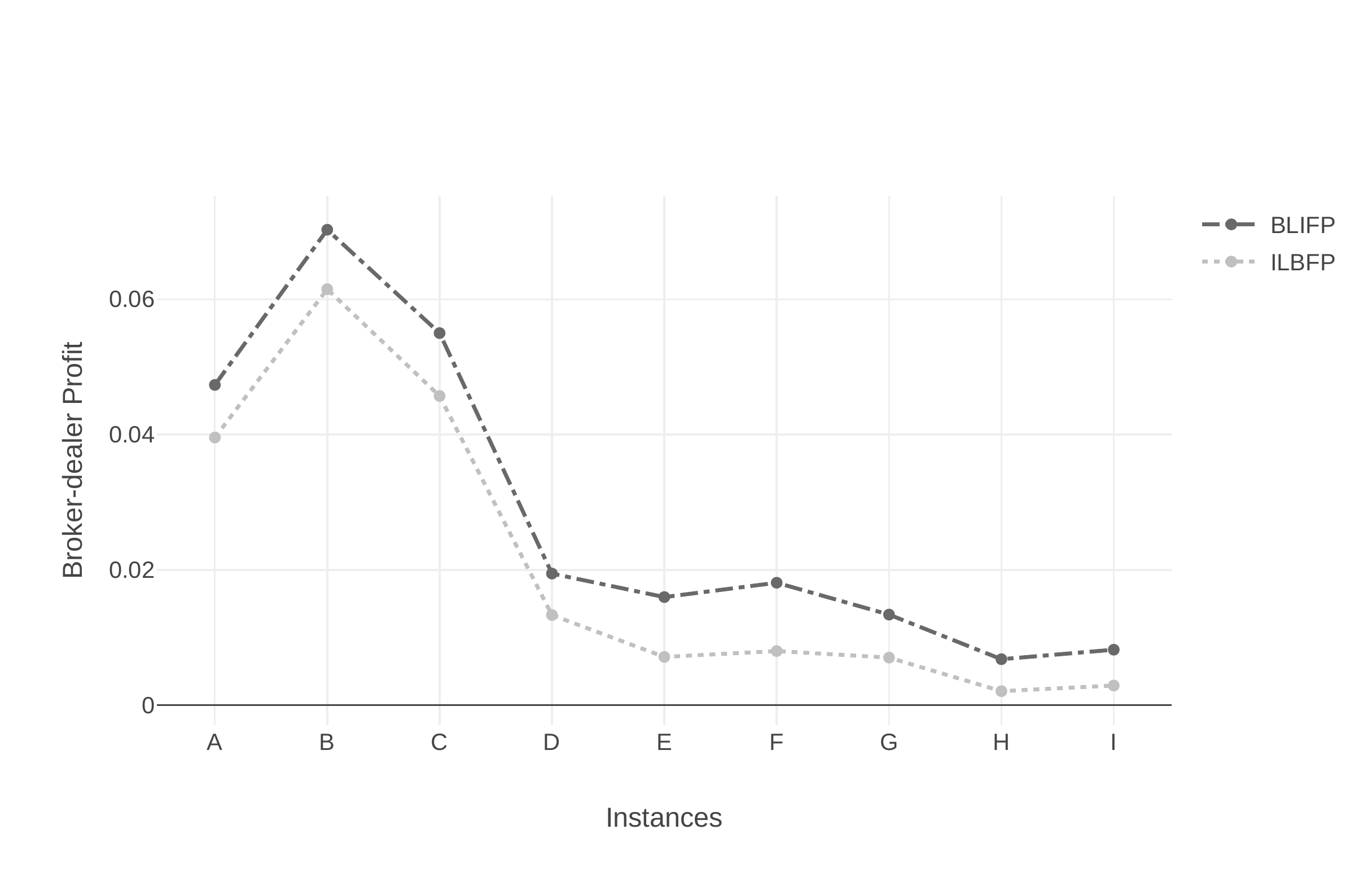}}

\end{multicols}
\caption{ {\footnotesize Values of the broker-dealer profit for {\textbf{BLIFP}} and {\textbf{ILBFP}} for $\alpha=0.5$ and $\mu_0=0.1$ (left) and for $\alpha=0.9$ and $\mu_0=0$ (right)}}
\end{figure}

\begin{figure}[H]
\centering

\begin{multicols}{2}

\fbox{\includegraphics[scale=0.35]{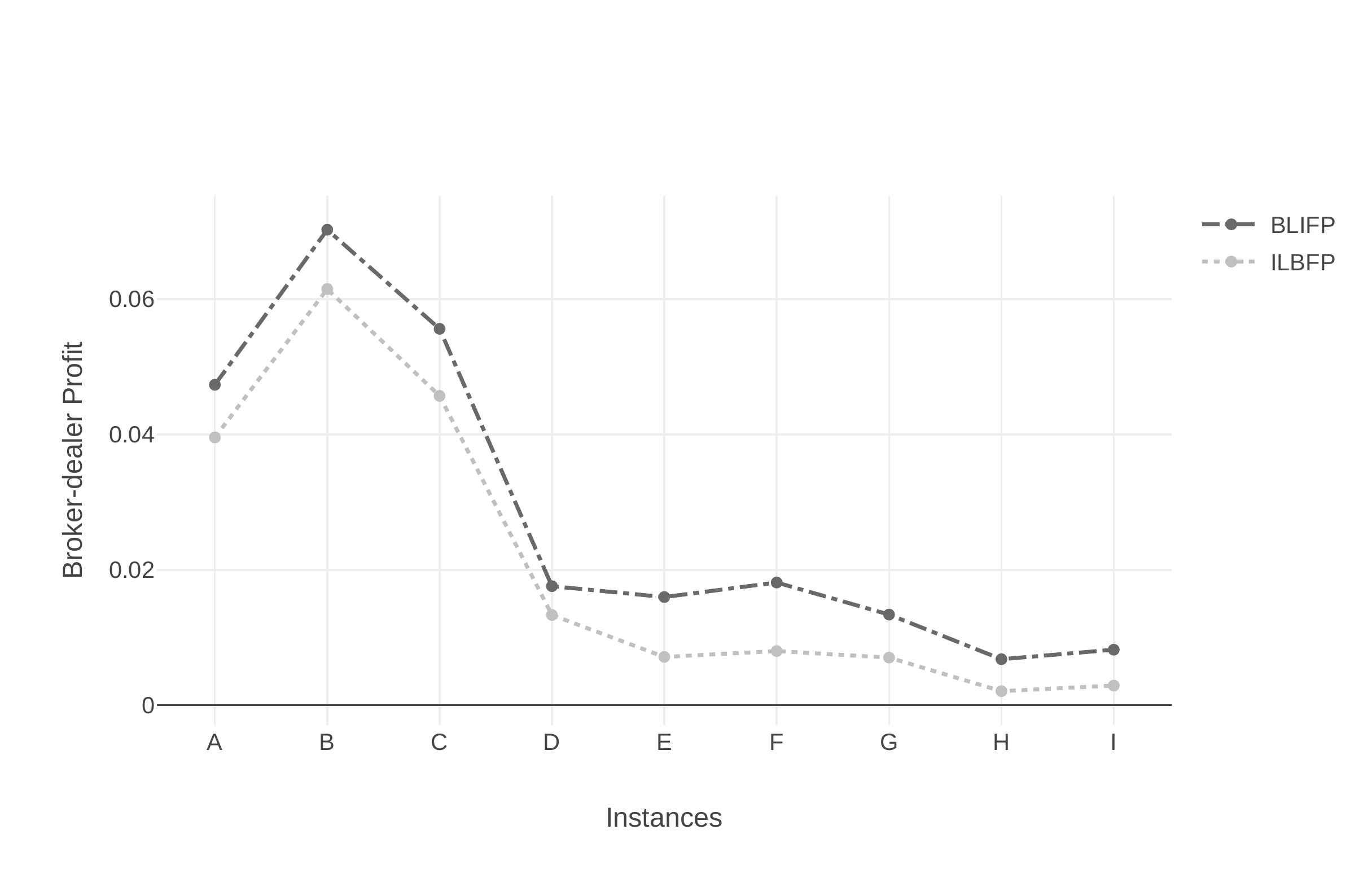}}

\fbox{\includegraphics[scale=0.35]{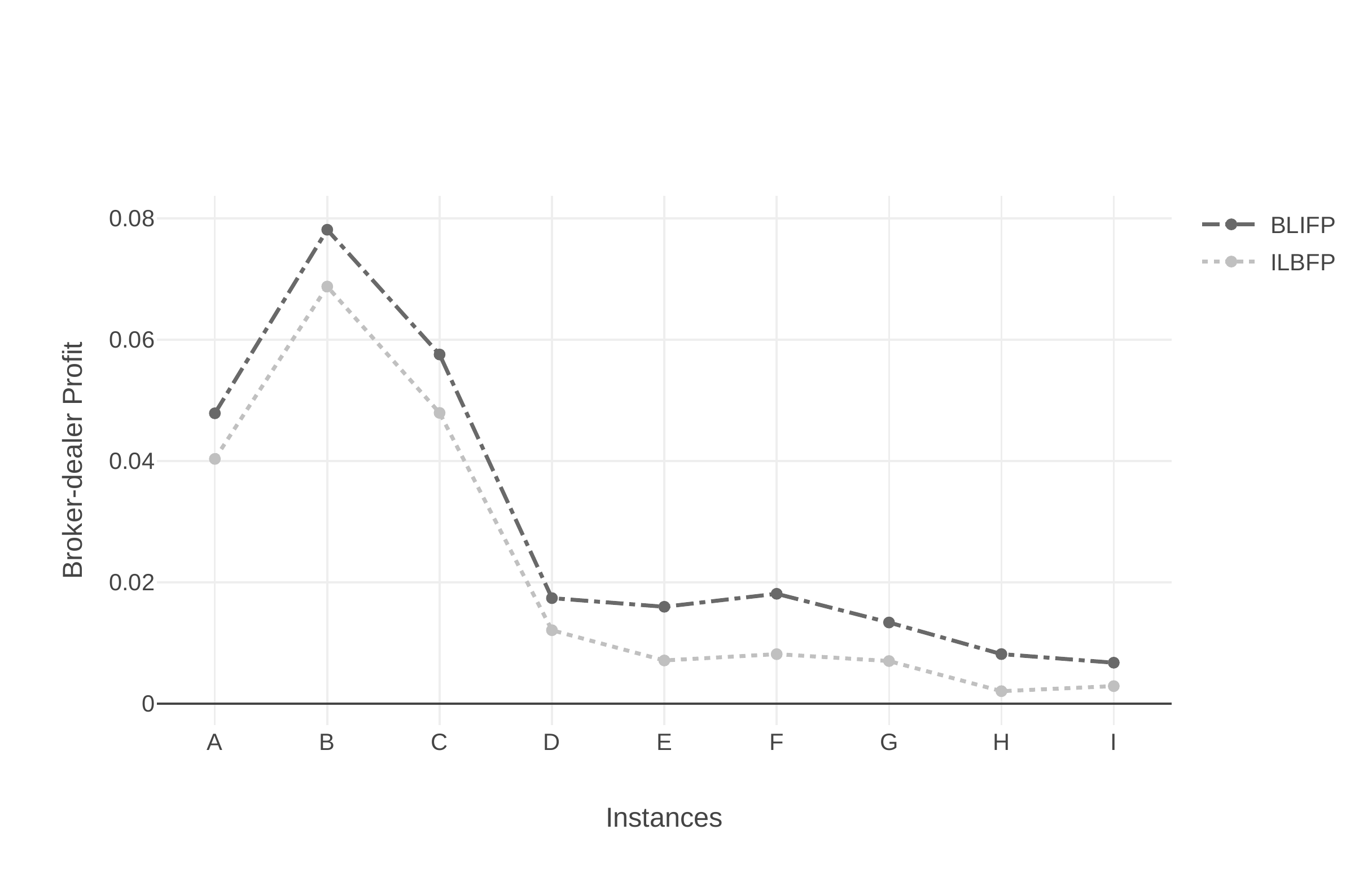}}

\end{multicols}
\caption{ {\footnotesize Values of the broker-dealer profit for {\textbf{BLIFP}} and {\textbf{ILBFP}} for $\alpha=0.9$ and $\mu_0=0.05$ (left) and for $\alpha=0.$ and $\mu_0=0.1$ (right)}}
\end{figure}

\subsubsection*{Broker-dealer profit + CVaR}
\begin{figure}[H]
\centering

\begin{multicols}{2}

\fbox{\includegraphics[scale=0.35]{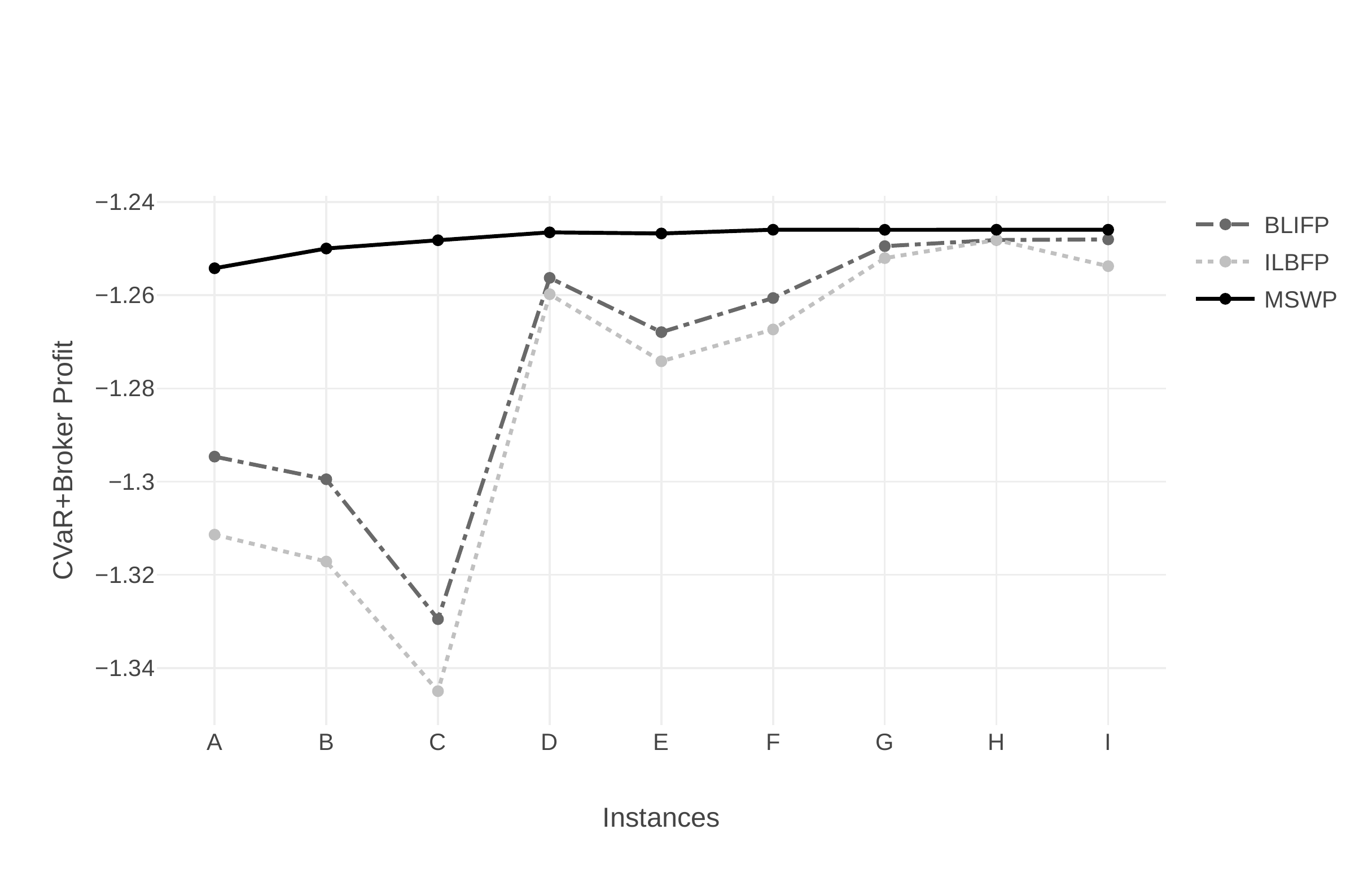}}

\fbox{\includegraphics[scale=0.35]{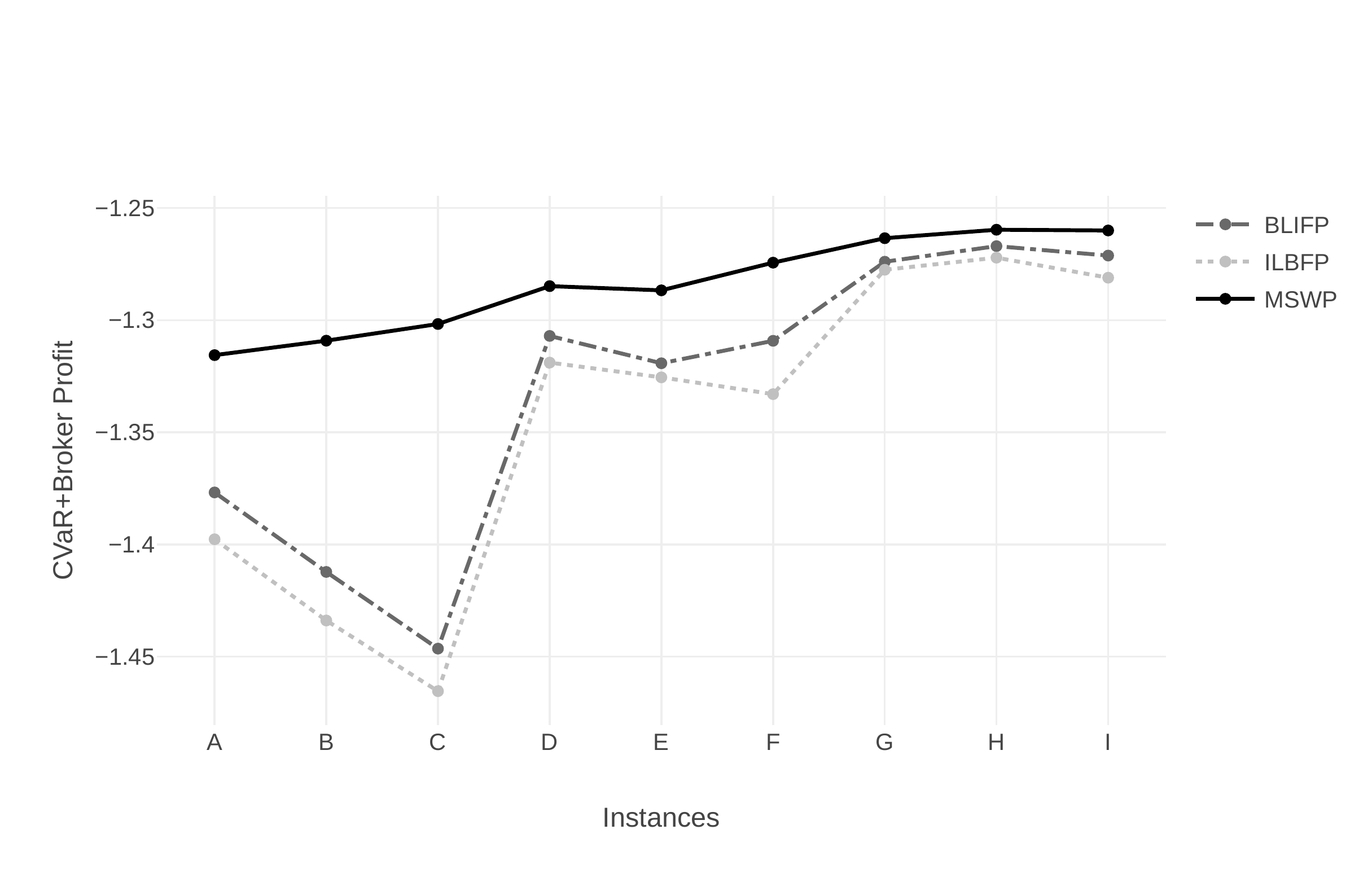}}

\end{multicols}
\caption{ {\footnotesize Values of the broker-dealer profit + CVaR for {\textbf{BLIFP}},  {\textbf{ILBFP} and \textbf{MSWP}} for $\alpha=0.05$ and $\mu_0=0$ (left) and for $\alpha=0.05$ and $\mu_0=0.05$ (right)}}\label{Grap:broker-dealer profit + CVaR_compare}
\end{figure}

\begin{figure}[H]
\centering

\begin{multicols}{2}

\fbox{\includegraphics[scale=0.35]{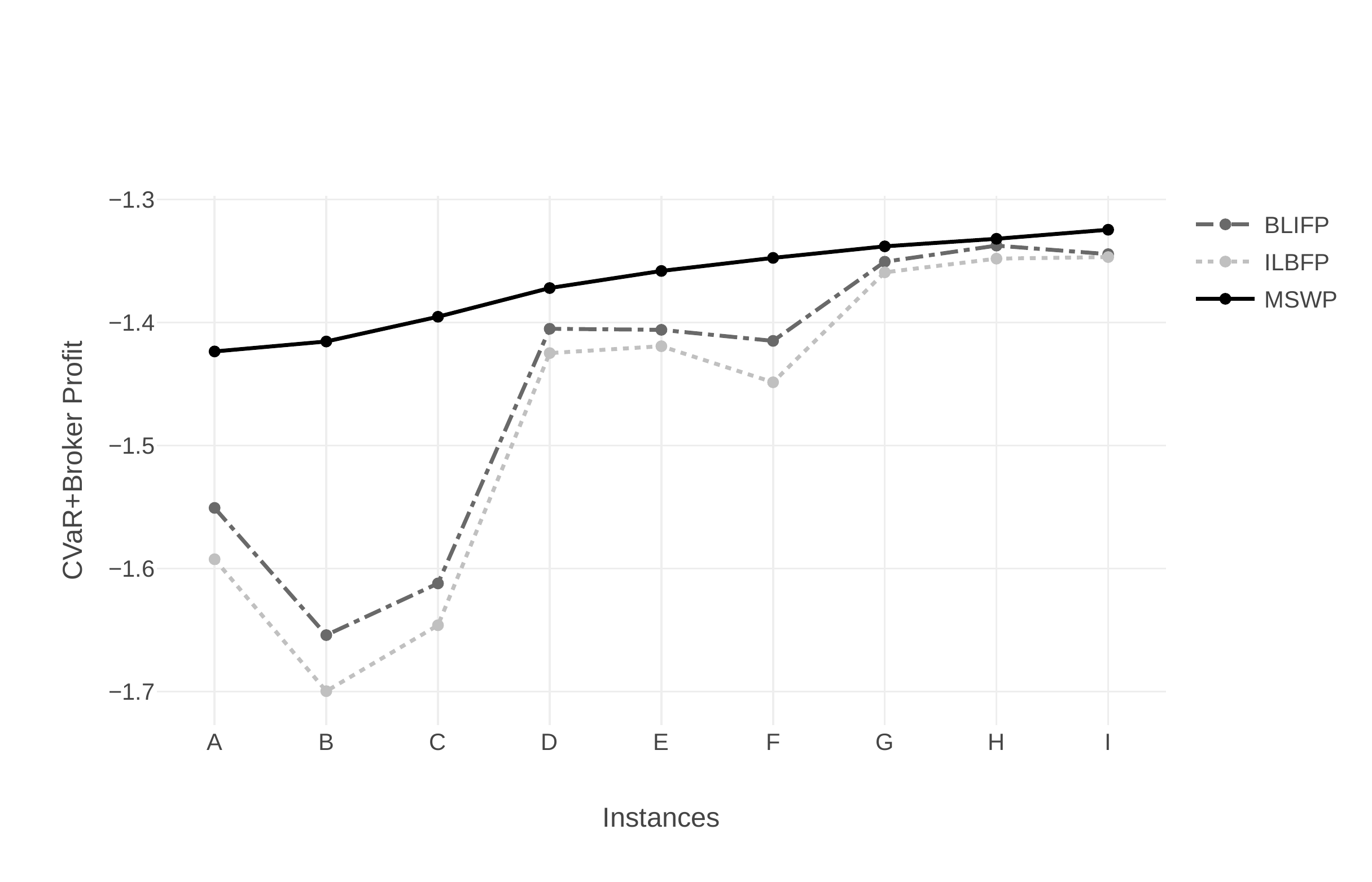}}

\fbox{\includegraphics[scale=0.35]{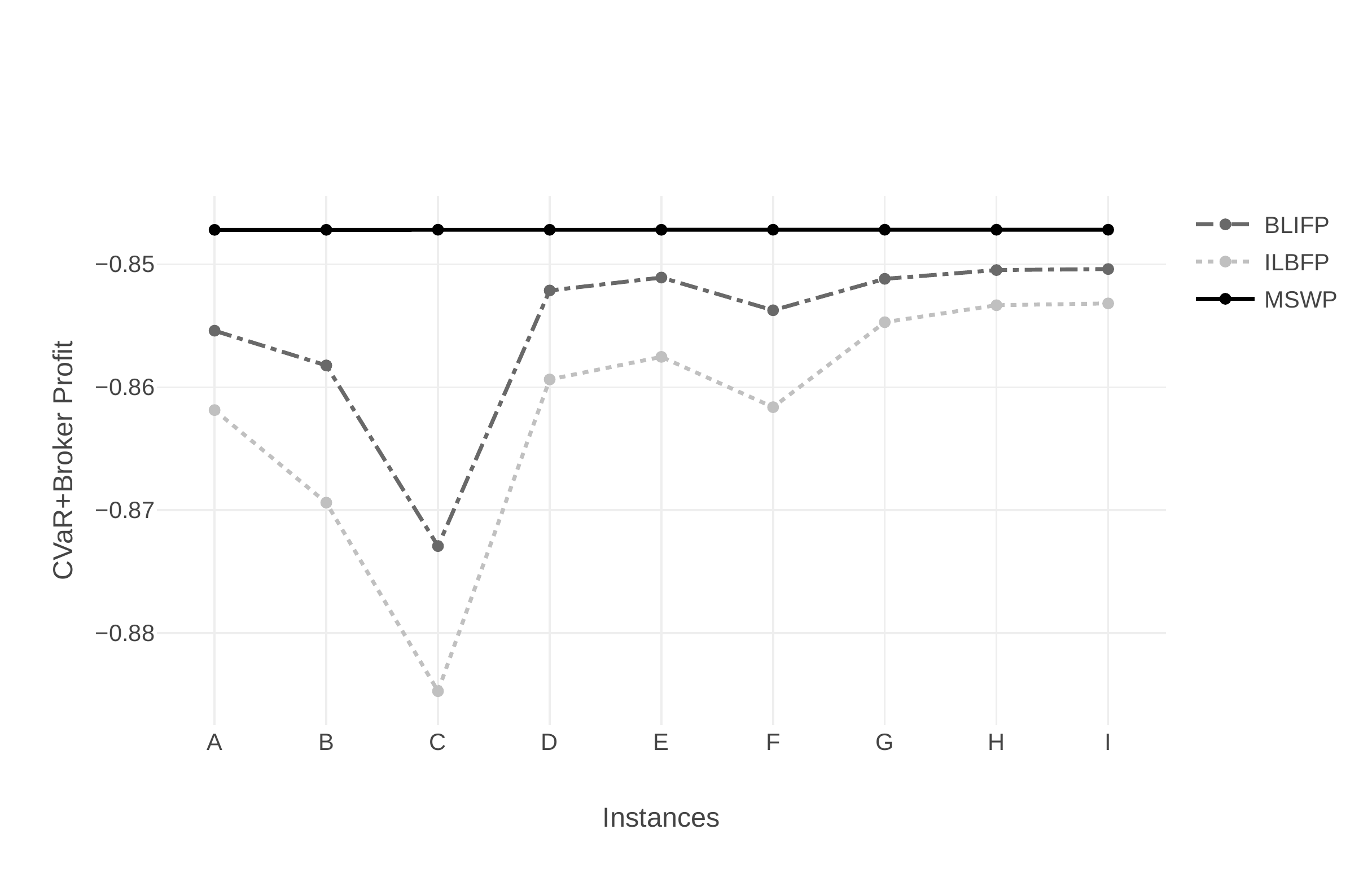}}

\end{multicols}
\caption{ {\footnotesize Values of the broker-dealer profit + CVaR for {\textbf{BLIFP}},  {\textbf{ILBFP} and \textbf{MSWP}} for $\alpha=0.05$ and $\mu_0=0.1$ (left) and for $\alpha=0.1$ and $\mu_0=0$ (right)}}
\end{figure}

\begin{figure}[H]
\centering

\begin{multicols}{2}

\fbox{\includegraphics[scale=0.35]{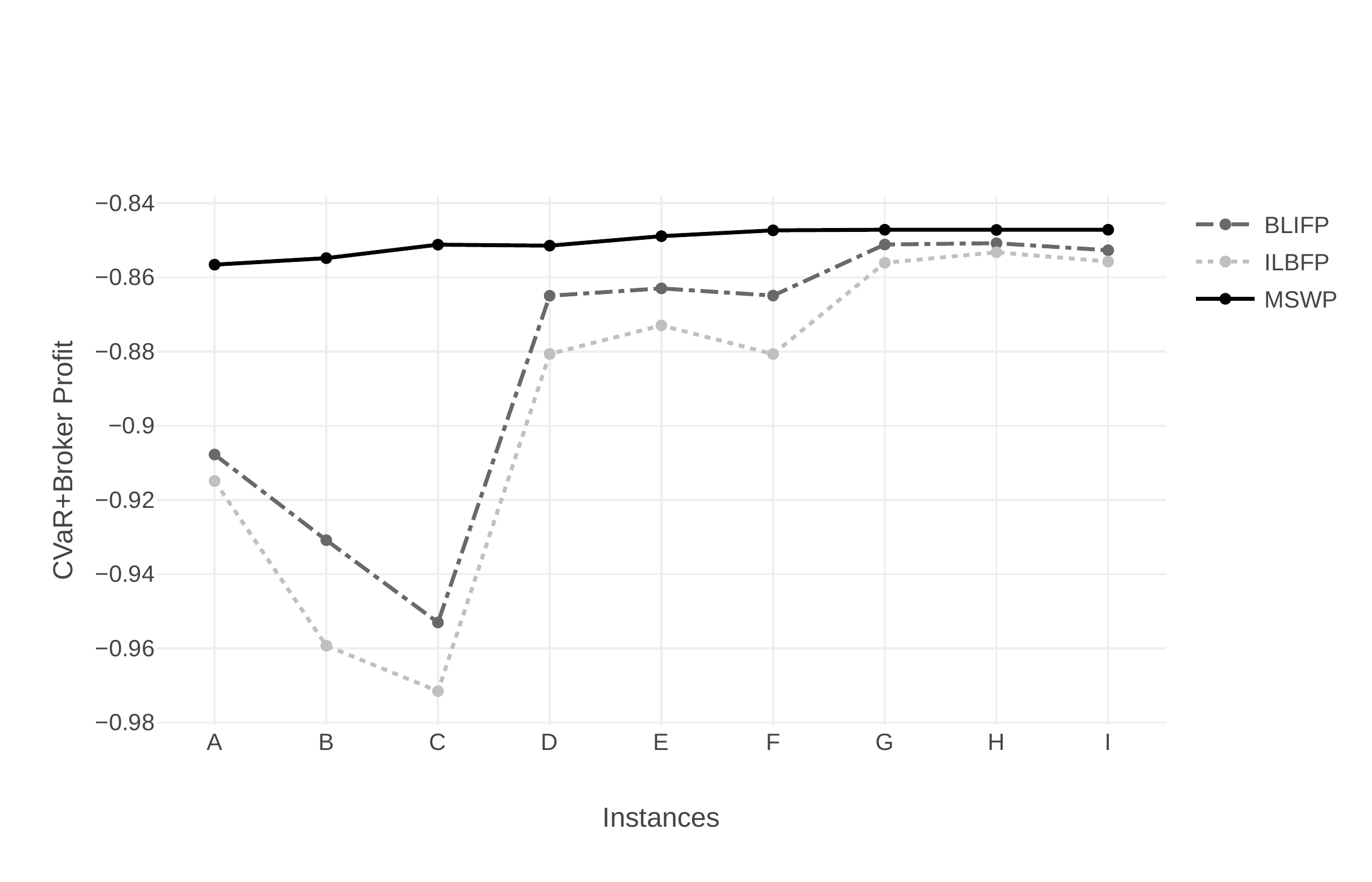}}

\fbox{\includegraphics[scale=0.35]{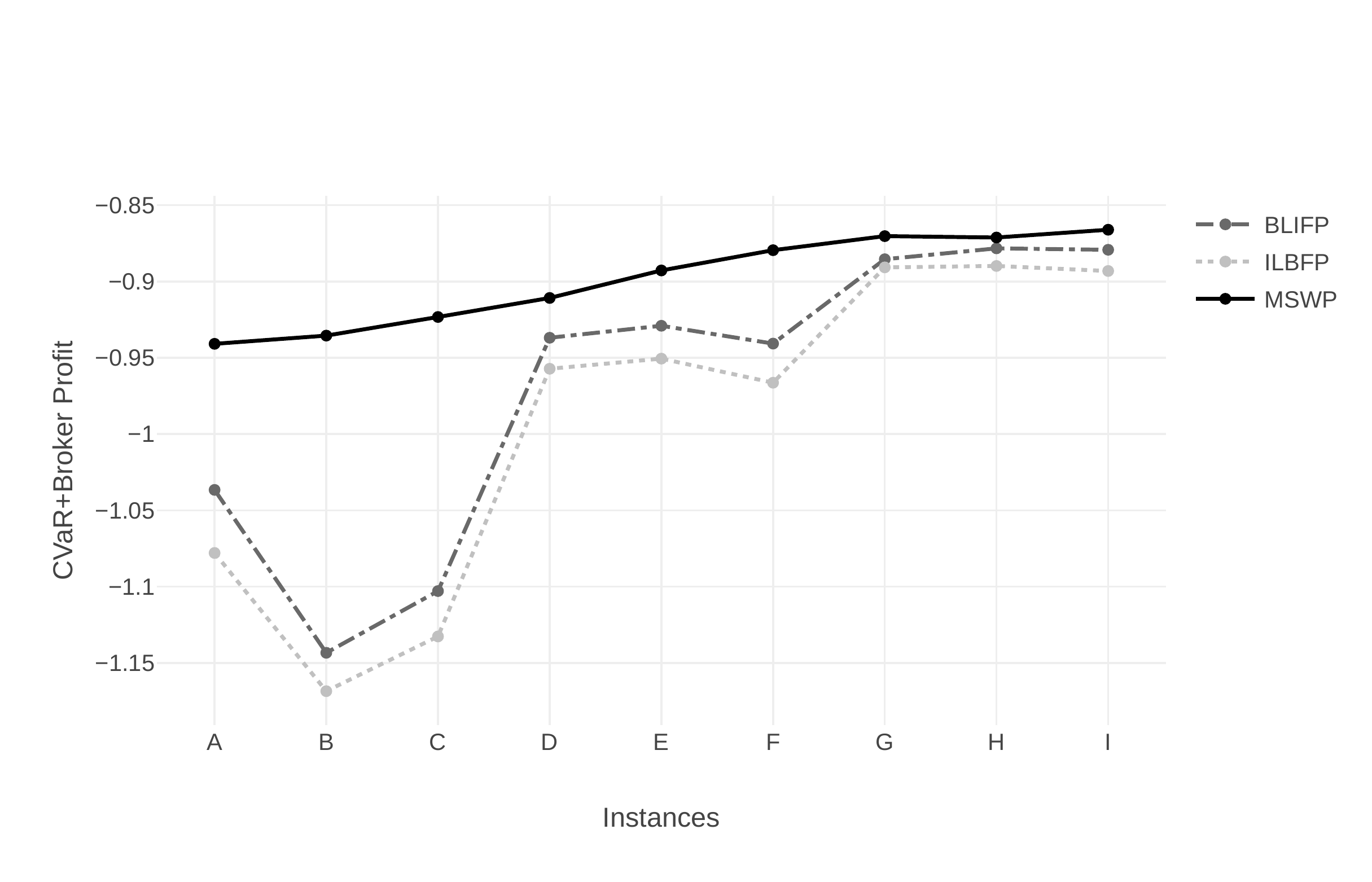}}

\end{multicols}
\caption{ {\footnotesize Values of the broker-dealer profit + CVaR for {\textbf{BLIFP}},  {\textbf{ILBFP} and \textbf{MSWP}} for $\alpha=0.1$ and $\mu_0=0.05$ (left) and for $\alpha=0.1$ and $\mu_0=0.1$ (right)}}
\end{figure}

\begin{figure}[H]
\centering

\begin{multicols}{2}

\fbox{\includegraphics[scale=0.35]{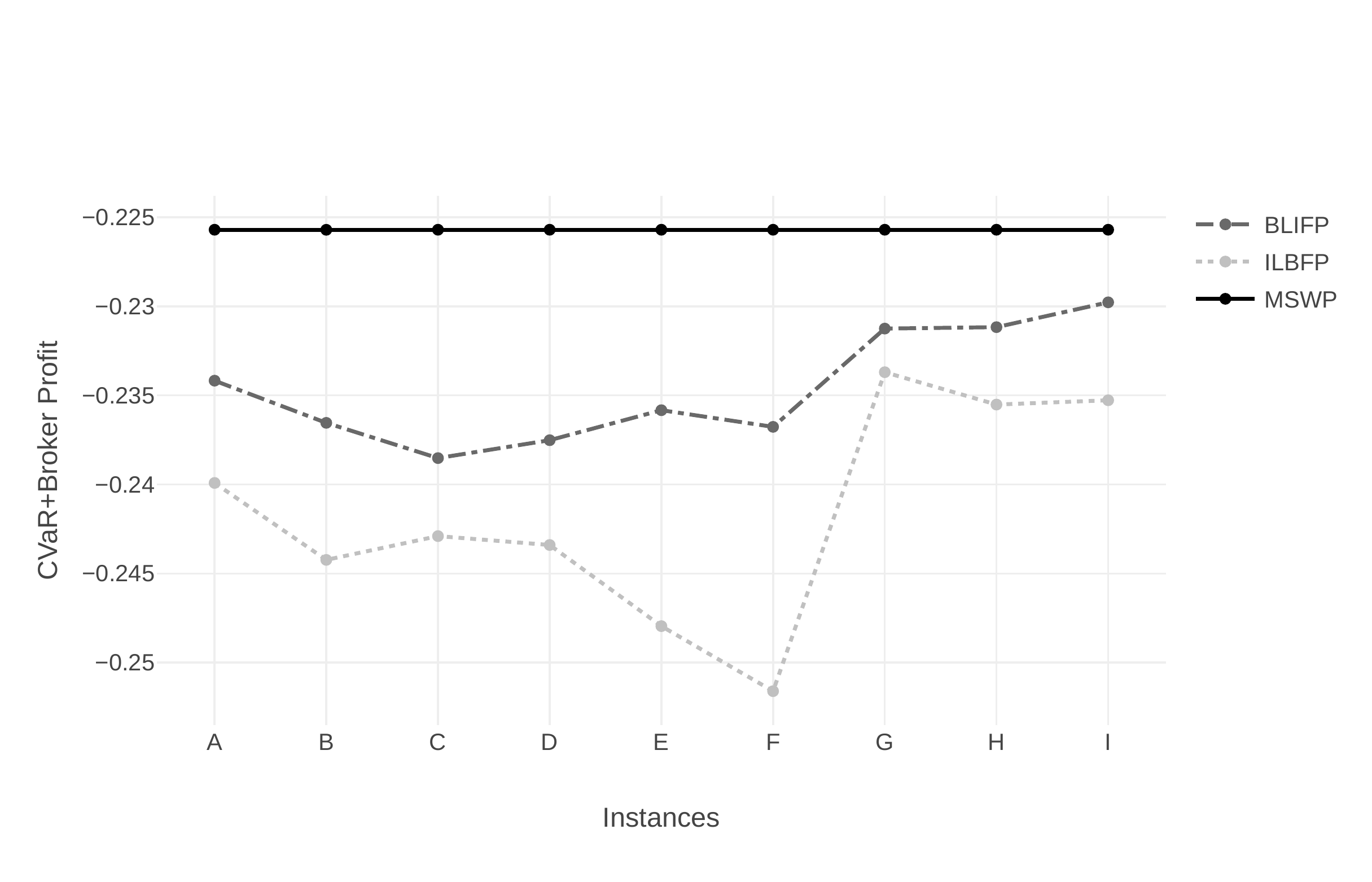}}

\fbox{\includegraphics[scale=0.35]{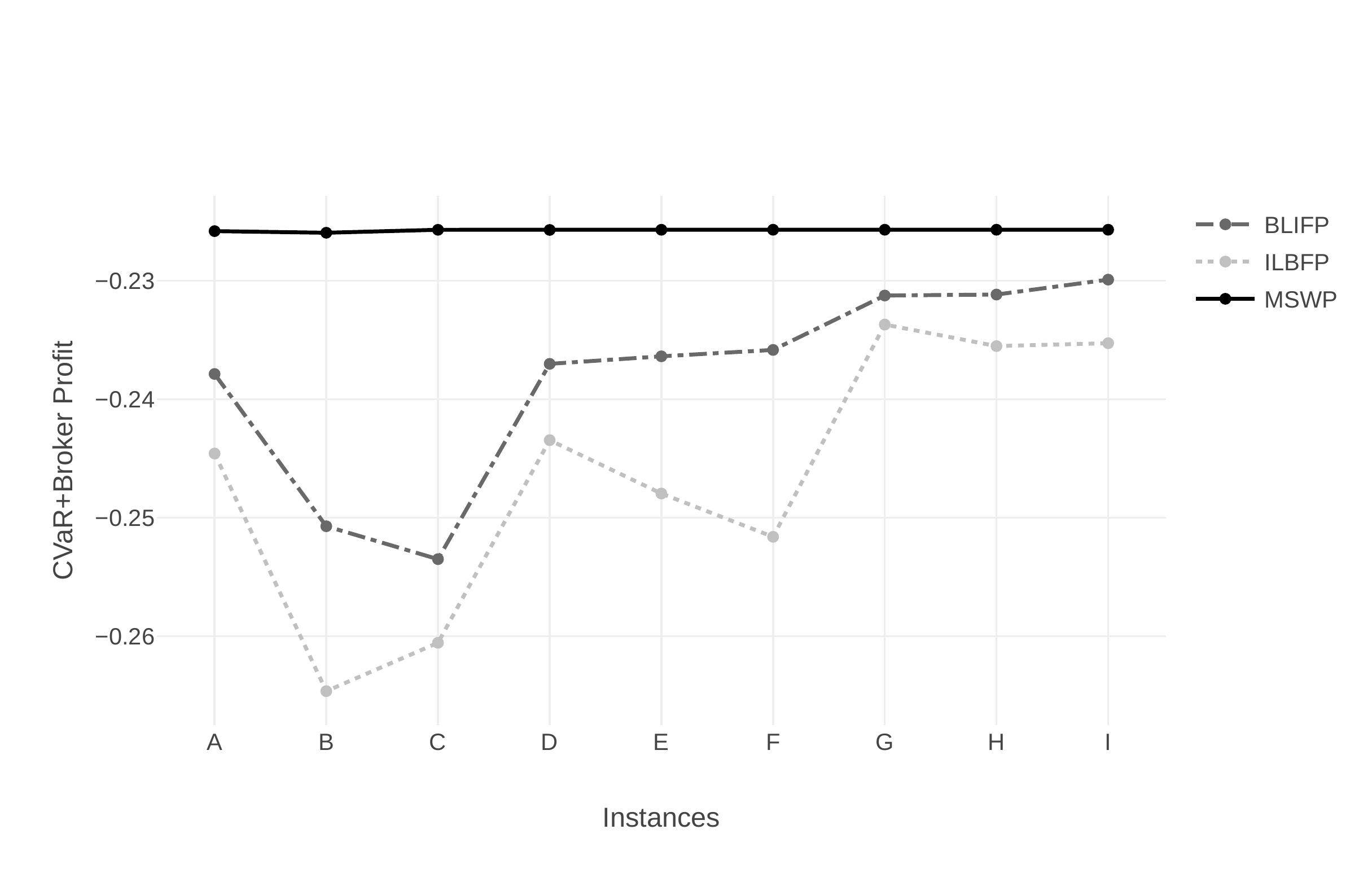}}

\end{multicols}
\caption{ {\footnotesize Values of the broker-dealer profit + CVaR for {\textbf{BLIFP}},  {\textbf{ILBFP} and \textbf{MSWP}} for $\alpha=0.5$ and $\mu_0=0$ (left) and for $\alpha=0.5$ and $\mu_0=0.05$ (right)}}
\end{figure}

\begin{figure}[H]
\centering

\begin{multicols}{2}

\fbox{\includegraphics[scale=0.35]{S05_010.pdf}}

\fbox{\includegraphics[scale=0.35]{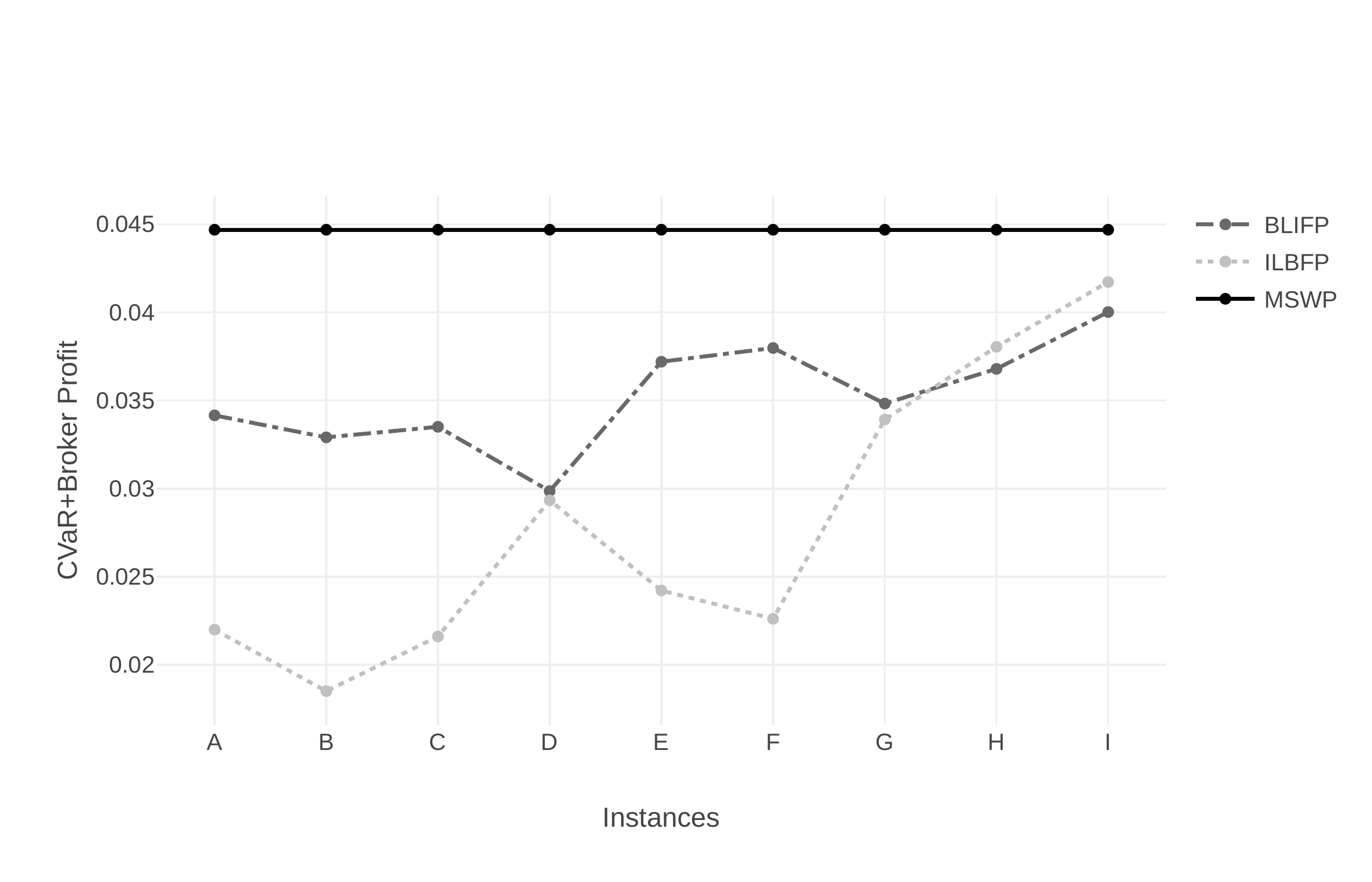}}

\end{multicols}
\caption{ {\footnotesize Values of the broker-dealer profit + CVaR for {\textbf{BLIFP}},  {\textbf{ILBFP} and \textbf{MSWP}} for $\alpha=0.5$ and $\mu_0=0.1$ (left) and for $\alpha=0.9$ and $\mu_0=0$ (right)}}
\end{figure}

\begin{figure}[H]
\centering

\begin{multicols}{2}

\fbox{\includegraphics[scale=0.35]{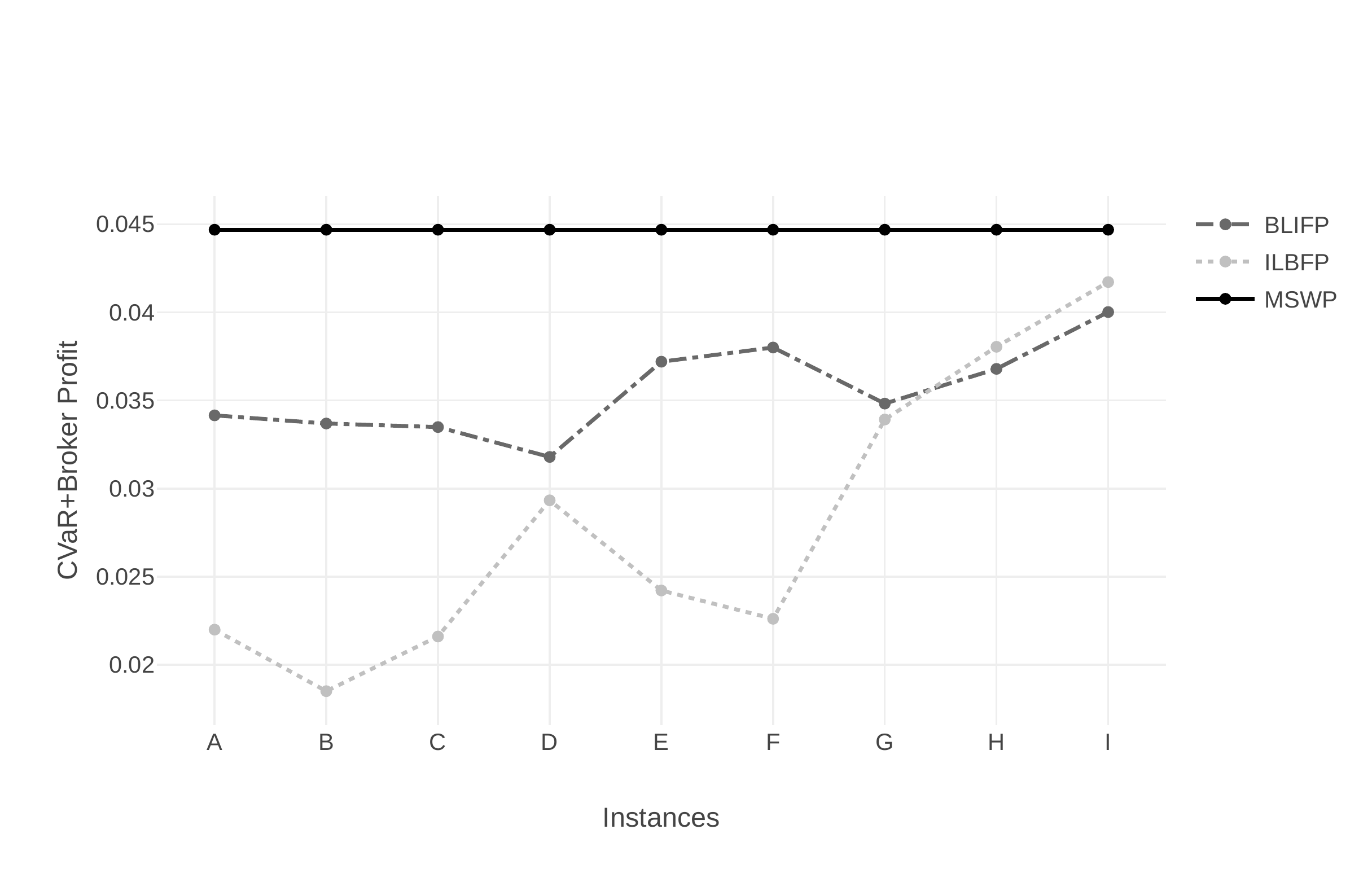}}

\fbox{\includegraphics[scale=0.35]{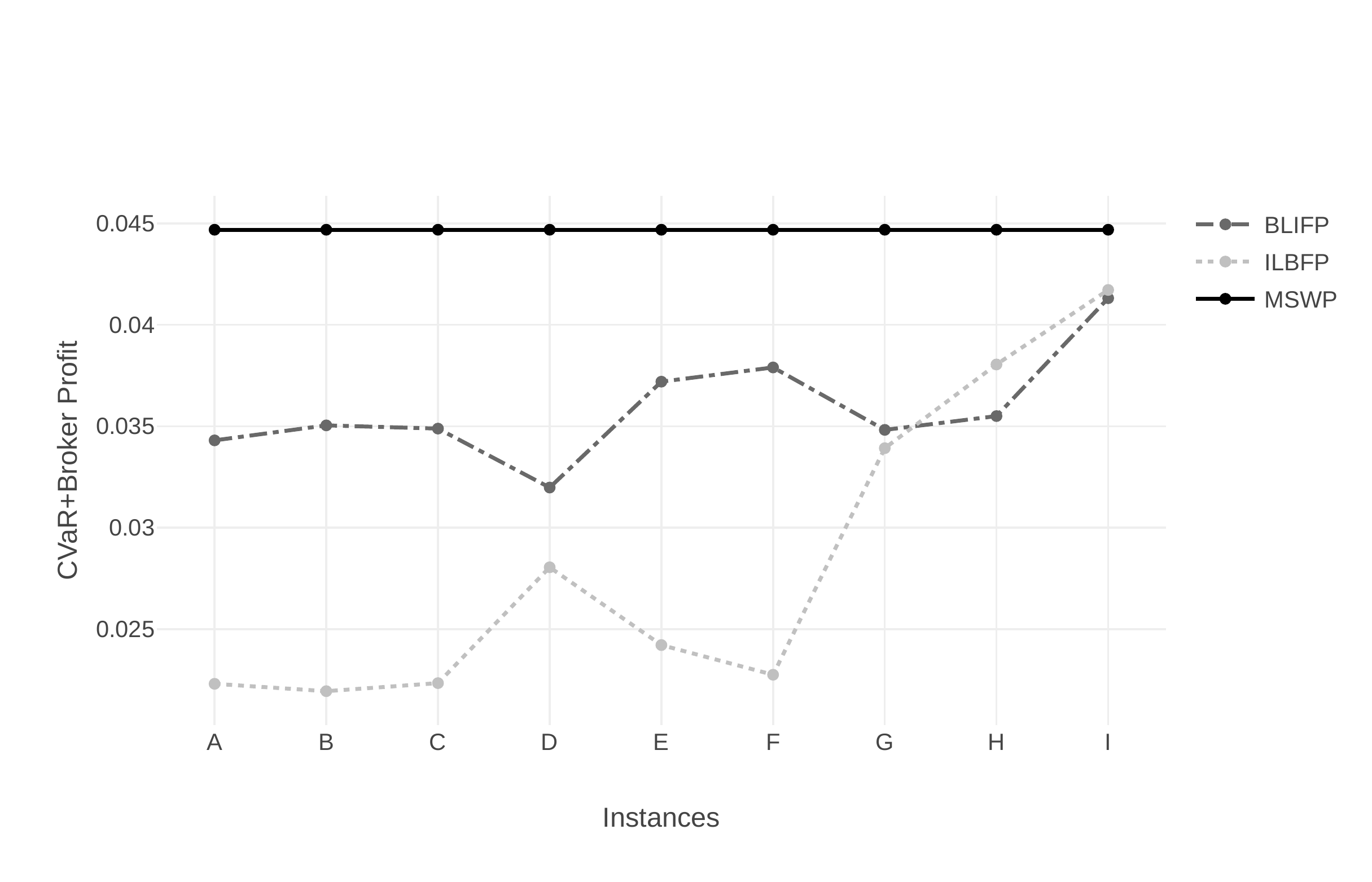}}

\end{multicols}
\caption{ {\footnotesize Values of the broker-dealer profit + CVaR for {\textbf{BLIFP}},  {\textbf{ILBFP} and \textbf{MSWP}} for $\alpha=0.9$ and $\mu_0=0.05$ (left) and for $\alpha=0.$ and $\mu_0=0.1$ (right)}}
\end{figure}

\subsubsection*{Expected Return}
\begin{figure}[H]
\centering

\begin{multicols}{2}

\fbox{\includegraphics[scale=0.35]{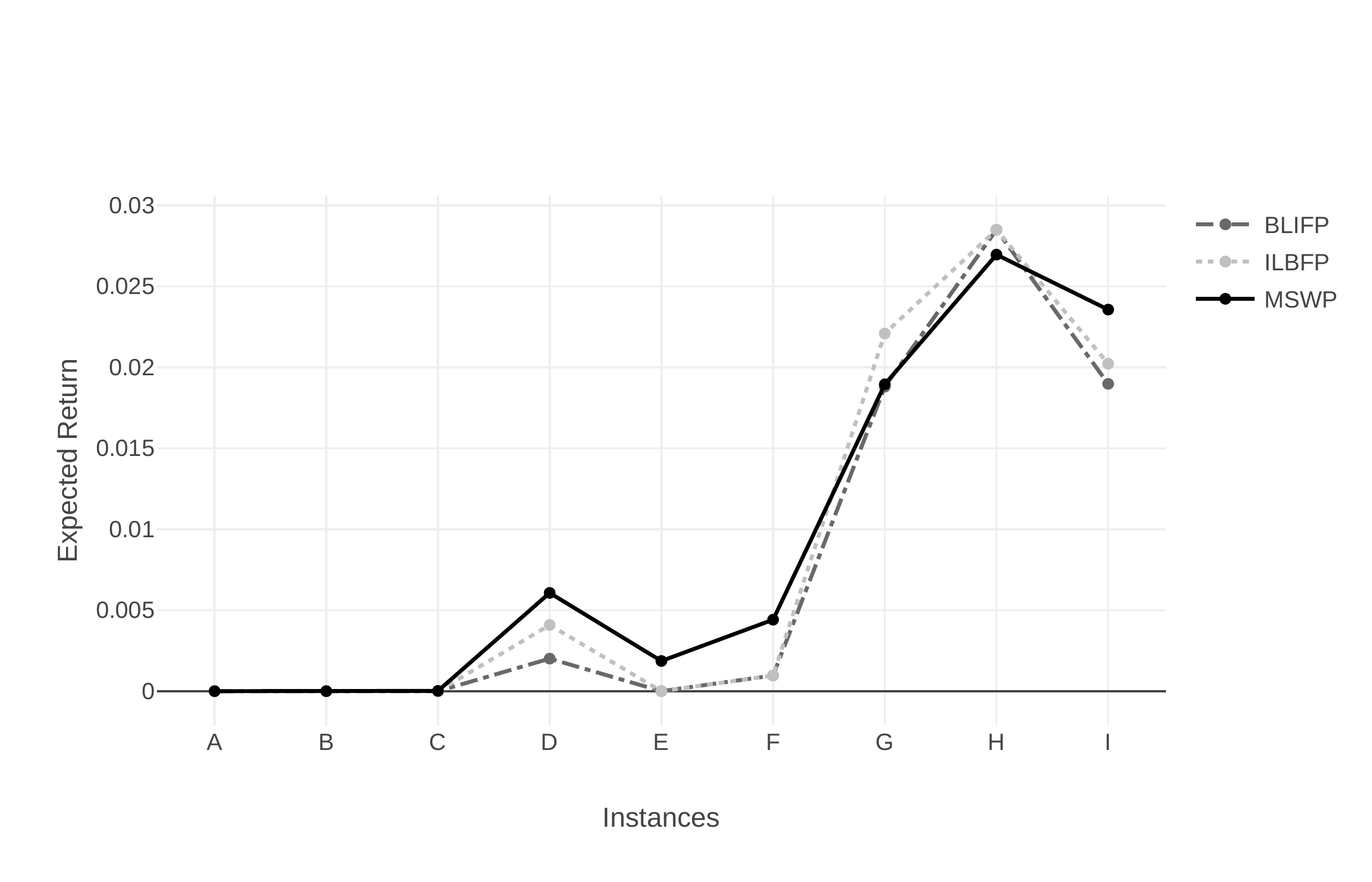}}

\fbox{\includegraphics[scale=0.35]{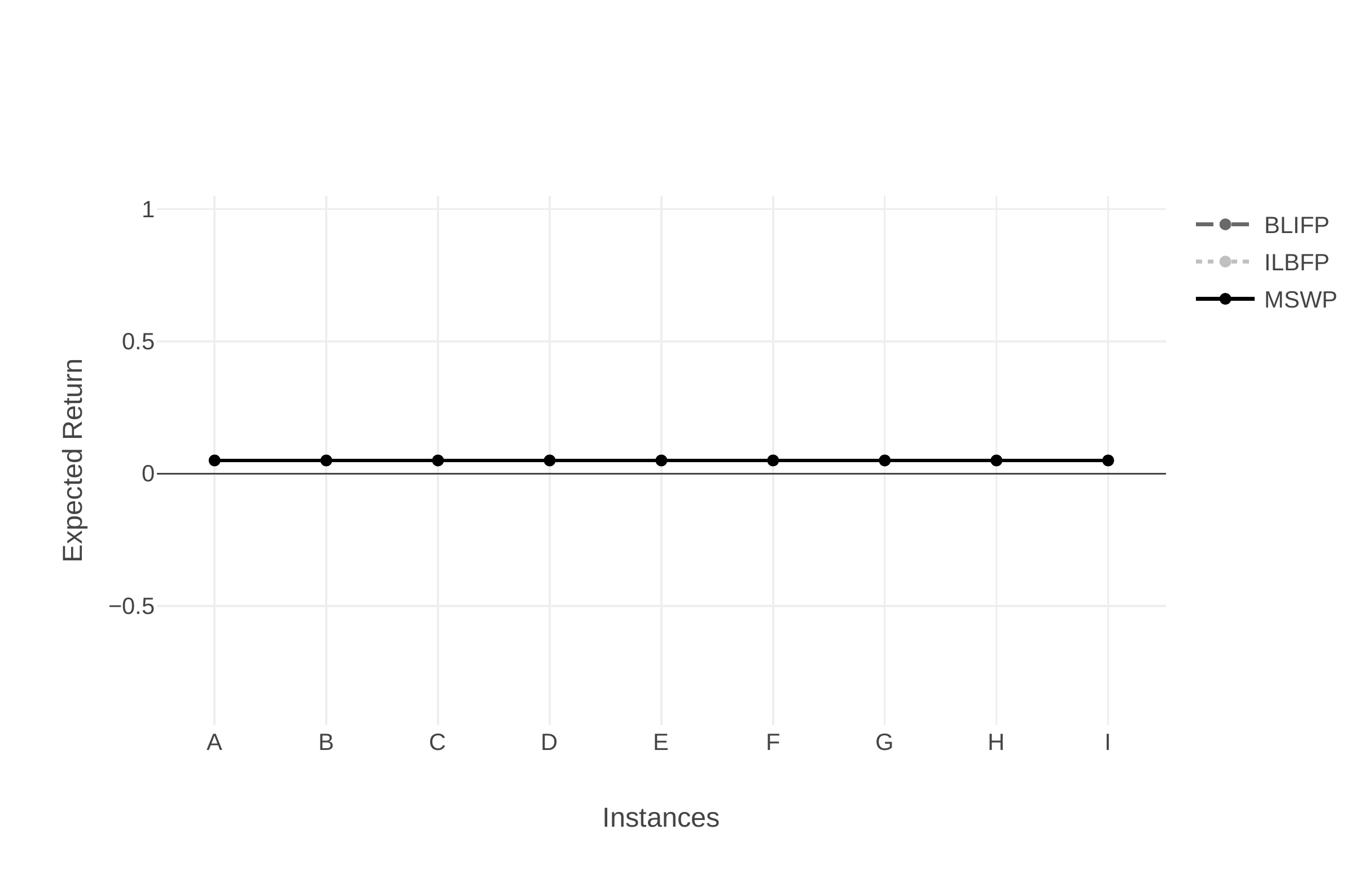}}

\end{multicols}
\caption{ {\footnotesize Expected return for {\textbf{BLIFP}},  {\textbf{ILBFP} and \textbf{MSWP}} for $\alpha=0.05$ and $\mu_0=0$ (left) and for $\alpha=0.05$ and $\mu_0=0.05$ (right)}}\label{Grap:broker-dealer profit + CVaR_compare}
\end{figure}

\begin{figure}[H]
\centering

\begin{multicols}{2}

\fbox{\includegraphics[scale=0.35]{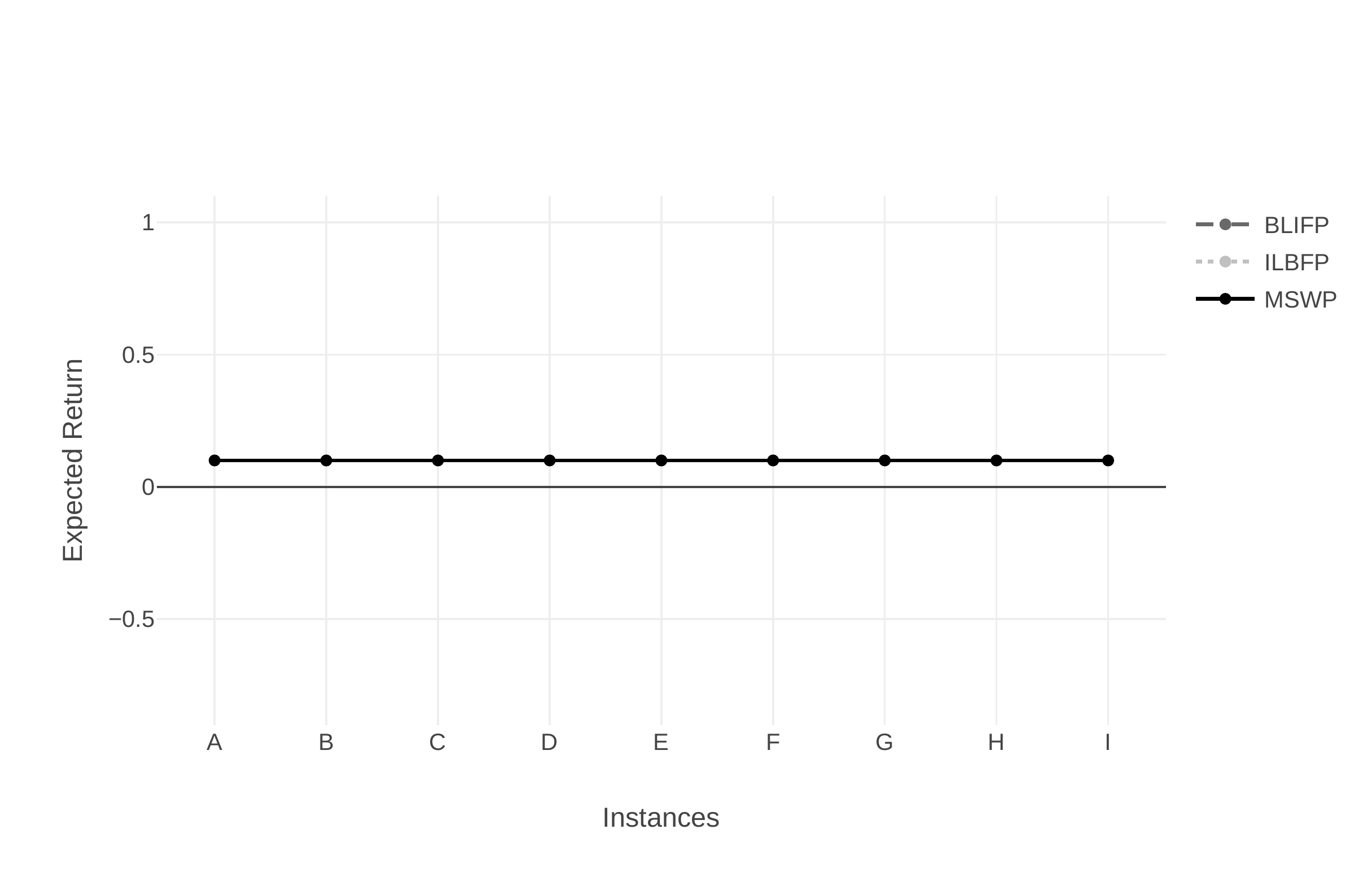}}

\fbox{\includegraphics[scale=0.35]{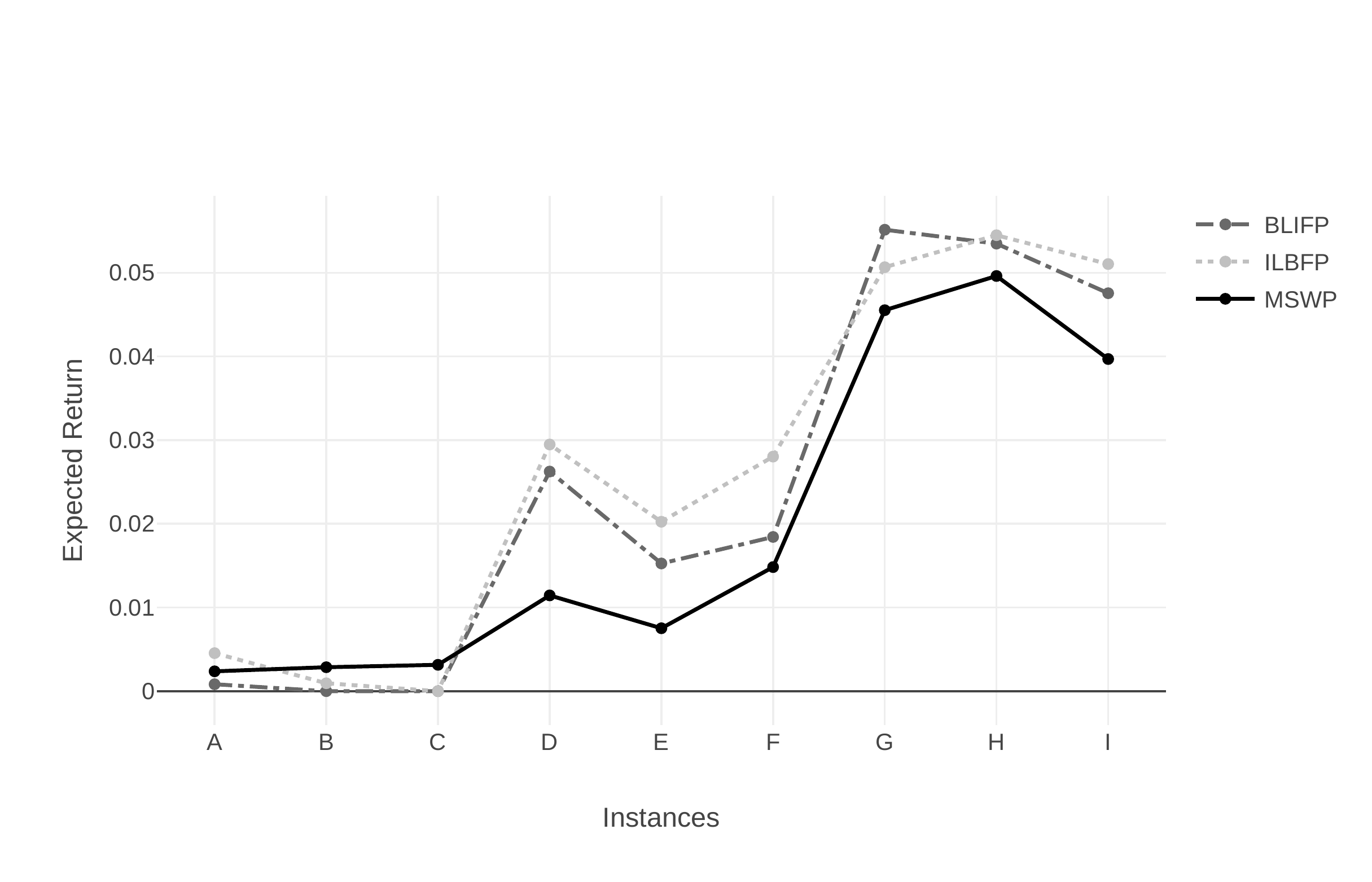}}

\end{multicols}
\caption{ {\footnotesize Expected return for {\textbf{BLIFP}},  {\textbf{ILBFP} and \textbf{MSWP}} for $\alpha=0.05$ and $\mu_0=0.1$ (left) and for $\alpha=0.1$ and $\mu_0=0$ (right)}}
\end{figure}

\begin{figure}[H]
\centering

\begin{multicols}{2}

\fbox{\includegraphics[scale=0.35]{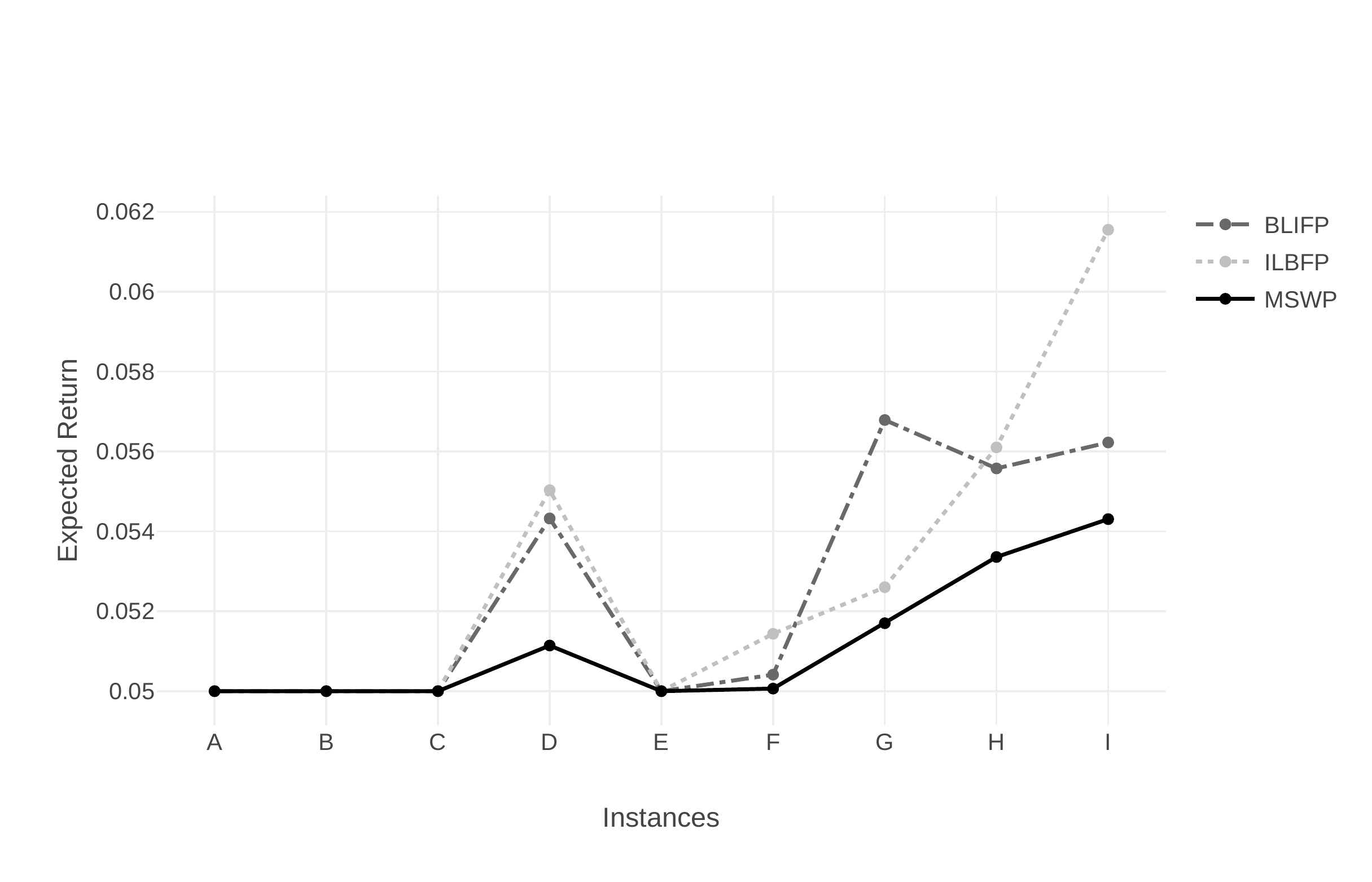}}

\fbox{\includegraphics[scale=0.35]{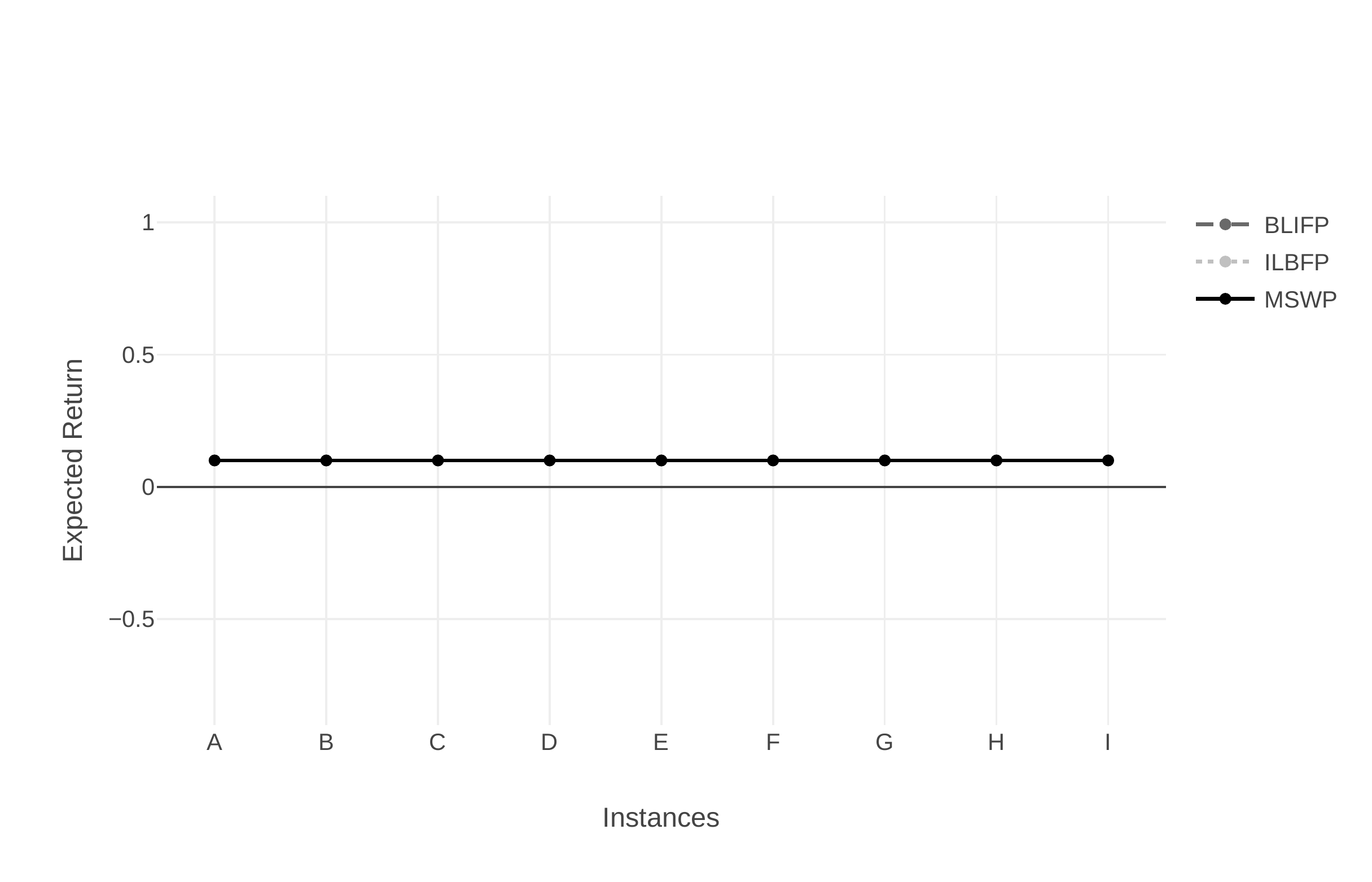}}

\end{multicols}
\caption{ {\footnotesize Expected return for {\textbf{BLIFP}},  {\textbf{ILBFP} and \textbf{MSWP}} for $\alpha=0.1$ and $\mu_0=0.05$ (left) and for $\alpha=0.1$ and $\mu_0=0.1$ (right)}}
\end{figure}

\begin{figure}[H]
\centering

\begin{multicols}{2}

\fbox{\includegraphics[scale=0.35]{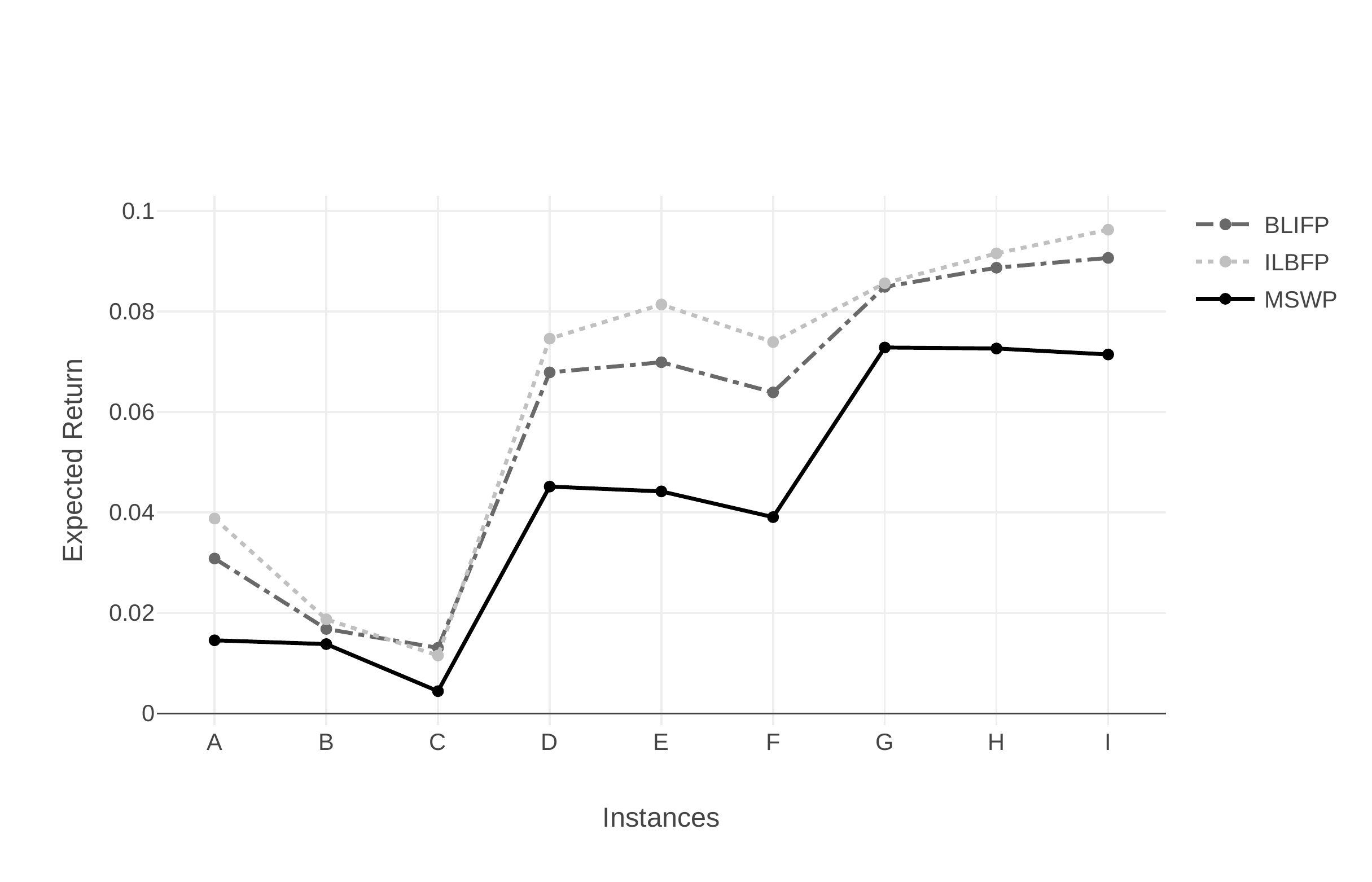}}

\fbox{\includegraphics[scale=0.35]{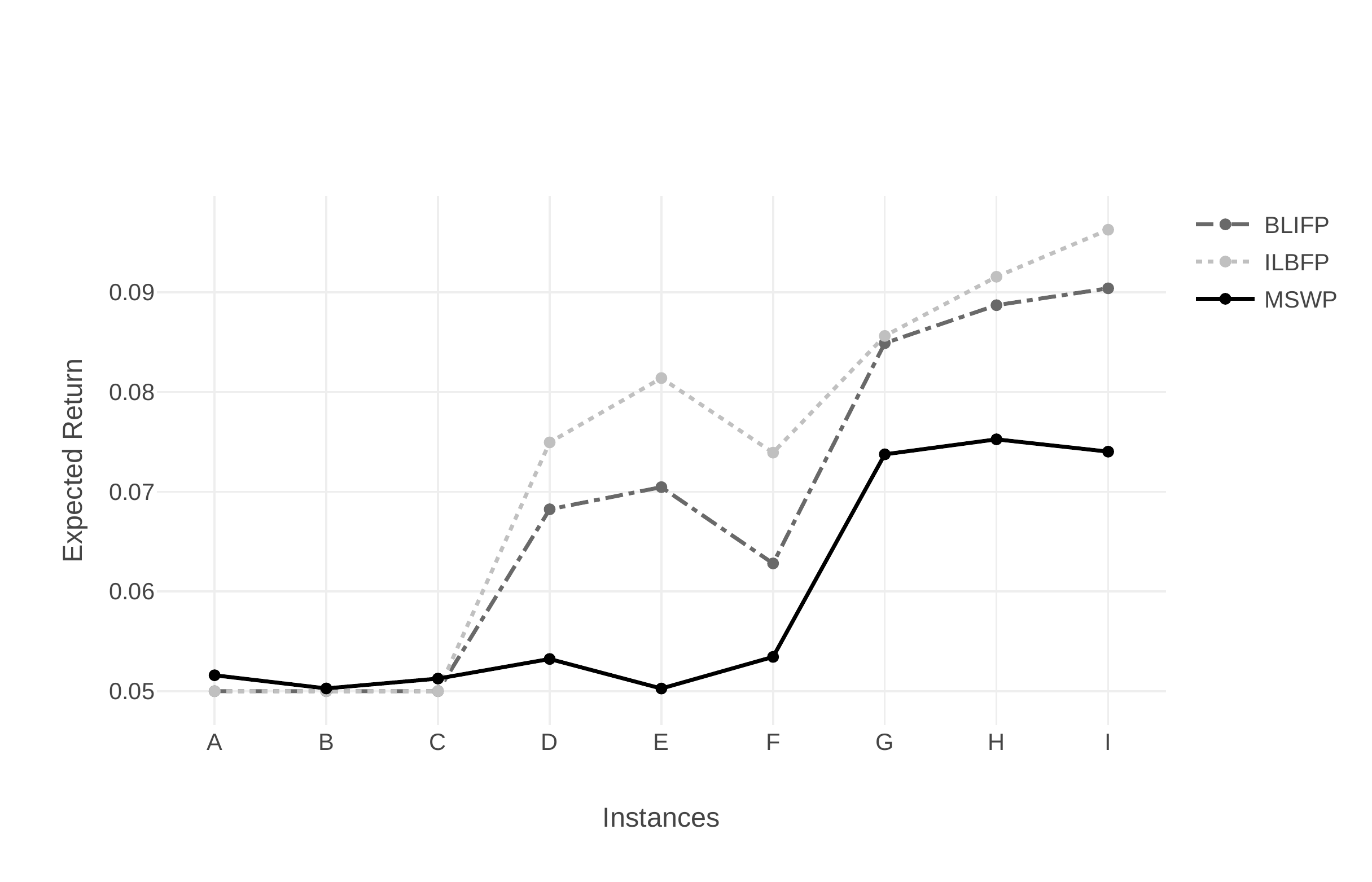}}

\end{multicols}
\caption{ {\footnotesize Expected return for {\textbf{BLIFP}},  {\textbf{ILBFP} and \textbf{MSWP}} for $\alpha=0.5$ and $\mu_0=0$ (left) and for $\alpha=0.5$ and $\mu_0=0.05$ (right)}}
\end{figure}

\begin{figure}[H]
\centering

\begin{multicols}{2}

\fbox{\includegraphics[scale=0.35]{E05_010.pdf}}

\fbox{\includegraphics[scale=0.35]{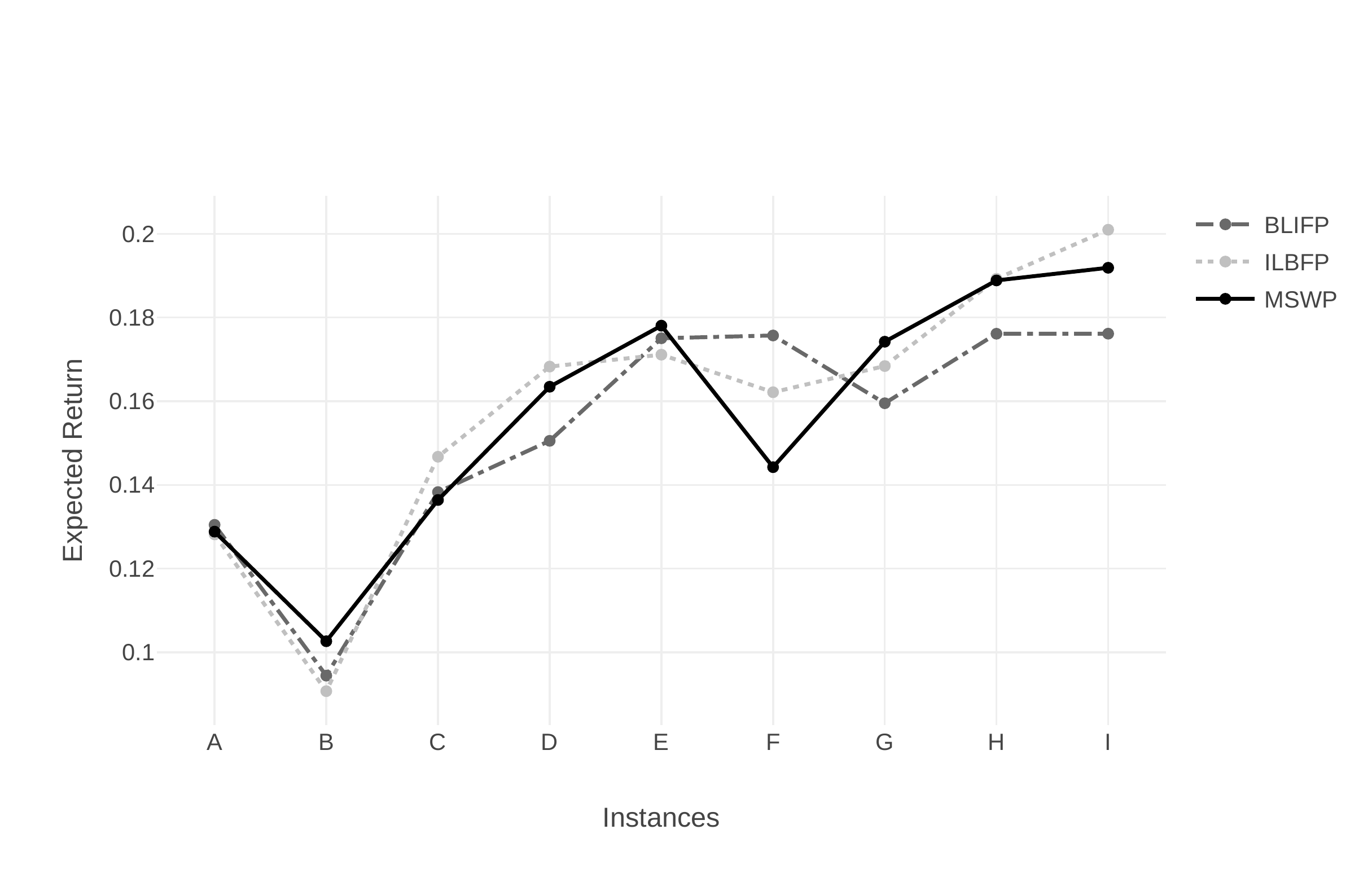}}

\end{multicols}
\caption{ {\footnotesize Expected return for {\textbf{BLIFP}},  {\textbf{ILBFP} and \textbf{MSWP}} for $\alpha=0.5$ and $\mu_0=0.1$ (left) and for $\alpha=0.9$ and $\mu_0=0$ (right)}}
\end{figure}

\begin{figure}[H]
\centering

\begin{multicols}{2}

\fbox{\includegraphics[scale=0.35]{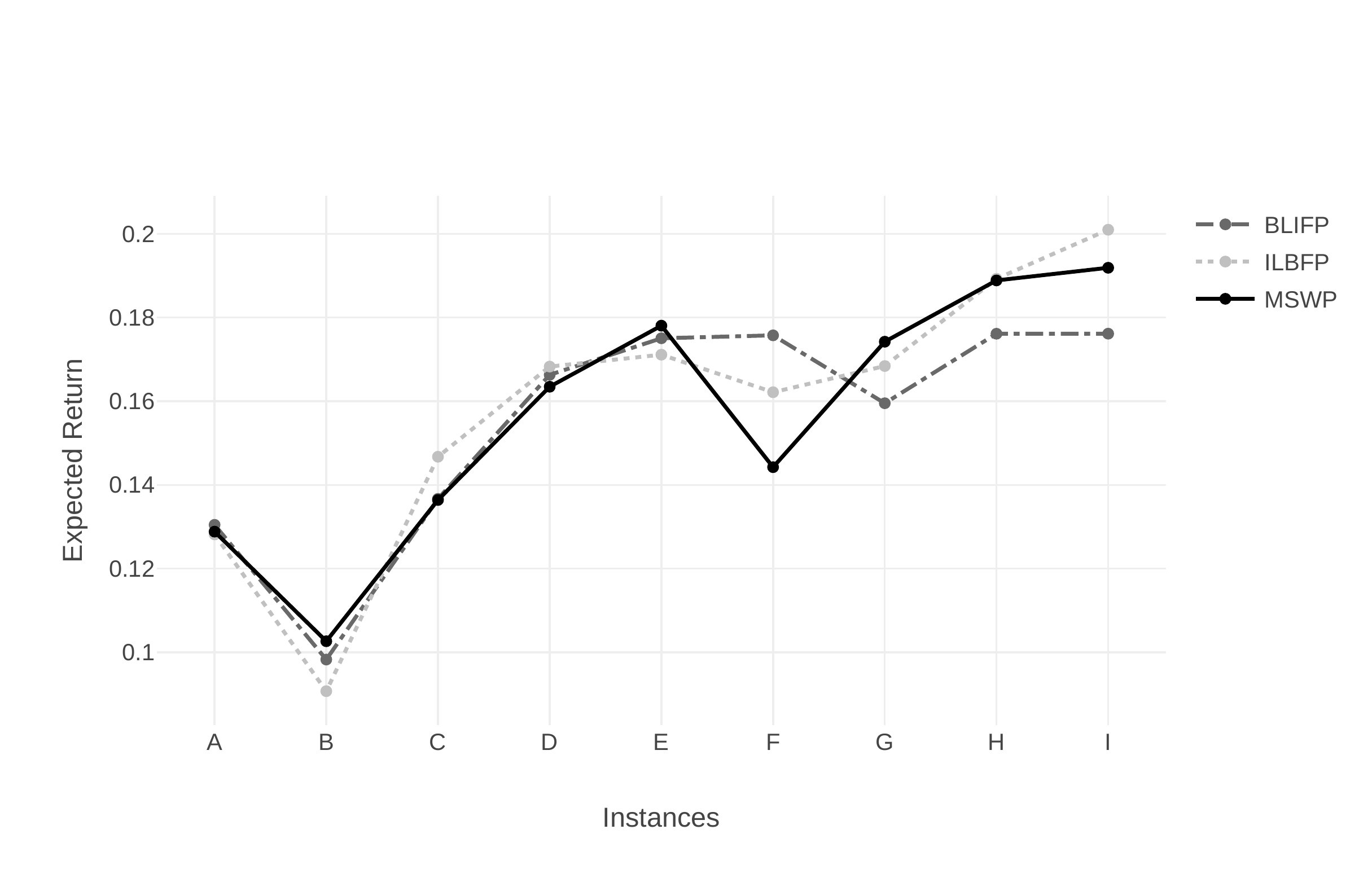}}

\fbox{\includegraphics[scale=0.35]{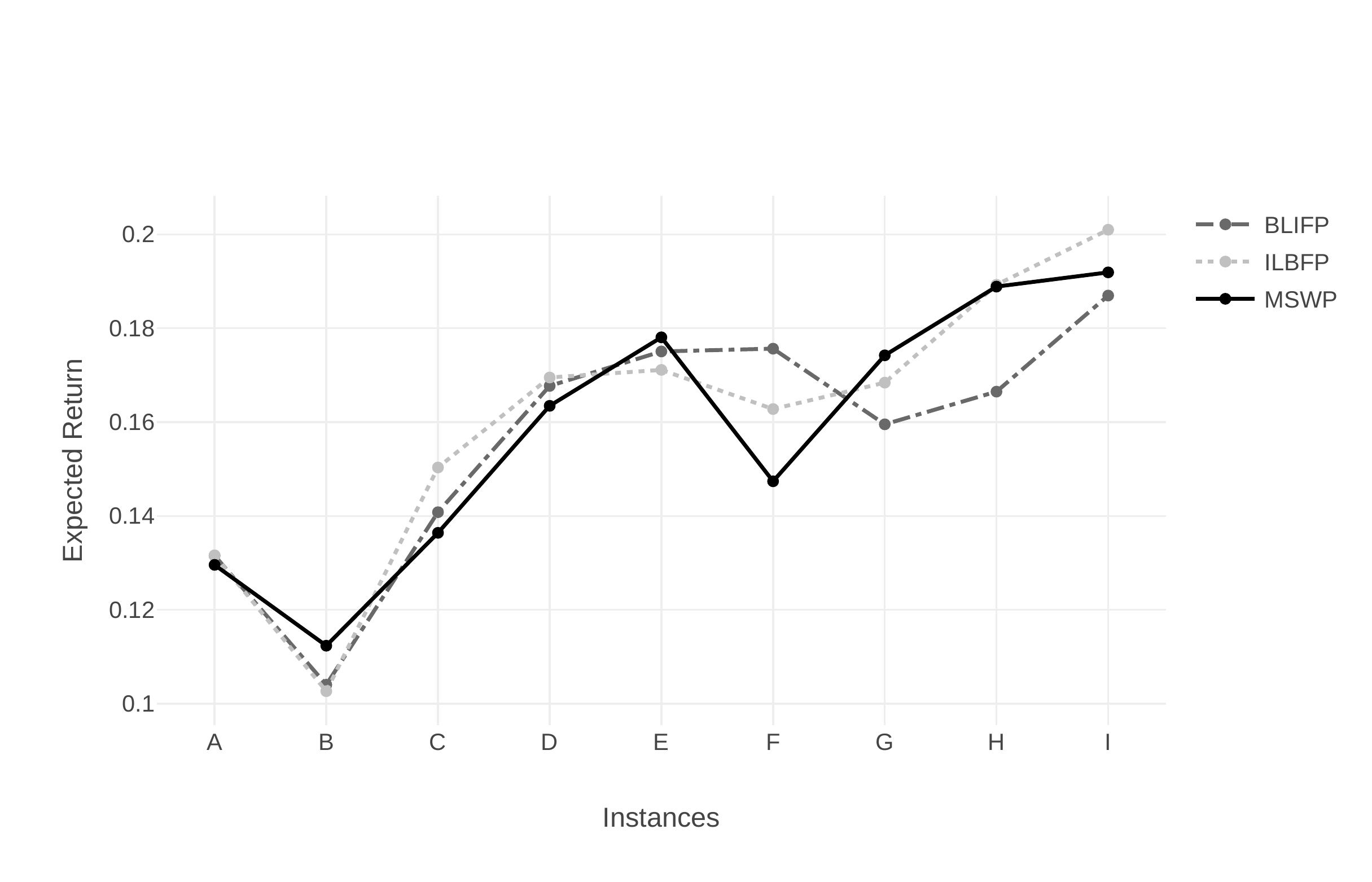}}

\end{multicols}
\caption{ {\footnotesize Expected return for {\textbf{BLIFP}},  {\textbf{ILBFP} and \textbf{MSWP}} for $\alpha=0.9$ and $\mu_0=0.05$ (left) and for $\alpha=0.$ and $\mu_0=0.1$ (right)}}
\end{figure}

\end{document}